\newcommand{\eroot}[1]{\ensuremath{\varepsilon_{#1}}}
\newcommand{\Inv}{\ensuremath{{\text{\rm Inv}}}}
\newcommand{\Fl}{\ensuremath{Fl}}
\newcommand{\G}{\mathfrak G}
\DeclareMathOperator{\upp}{up}
\DeclareMathOperator{\chainup}{top}
\DeclareMathOperator{\uppath}{uppath}
\DeclareMathOperator{\bpath}{path}
\newcommand*\colvec[1]{
        \global\colveccount#1
        \begin{pmatrix}
        \colvecnext
}
\def\colvecnext#1{
        #1
        \global\advance\colveccount-1
        \ifnum\colveccount>0
                \\
                \expandafter\colvecnext
        \else
                \end{pmatrix}
                \fi
              }
\newcommand{\Z}{{\mathbb Z}} %
\newcommand{\C}{{\mathbb C}} %
\newcommand{\ov}{\overline} %
\newcommand{\eS}{\widehat S} %
\newcommand{\intersect}{\cap}
\newcommand{\union}{\cup}
\newcommand{\disjunion}{\sqcup} %
\newcommand{\Disjunion}{\bigsqcup} %
\newcommand{\dunion}{\disjunion}
\newcommand{\Dunion}{\Disjunion}
\newcommand{\from}{\colon} %
\renewcommand{\to}[1][]{\overset{#1}{\rightarrow}}
\newcommand{\subgroup}{\mathrel{\leq}}
\newcommand{\suchthat}{\mid} %
\newcommand{\st}{\suchthat} %
\renewcommand{\emptyset}{{\varnothing}}
\renewcommand{\iff}{\mathrel{\Longleftrightarrow}}
\renewcommand{\subset}{\subseteq}
	\definecolor{mydefi}{cmyk}{1,0,0,.5}
	\definecolor{myred}{rgb}{.7,.1,.1}
	\definecolor{myblue}{rgb}{.1,.1,.6}
	\definecolor{mygreen}{rgb}{.1,.6,.1}
        \definecolor{lightred}{rgb}{1,0.9,0.9}
        \definecolor{lightpurple}{rgb}{0.5,0.6,1}
\newcommand{\de}{\emph} %
\let\save@mathaccent\mathaccent
\newcommand*\if@single[3]{%
  \setbox0\hbox{${\mathaccent"0362{#1}}^H$}%
  \setbox2\hbox{${\mathaccent"0362{\kern0pt#1}}^H$}%
  \ifdim\ht0=\ht2 #3\else #2\fi
  }
\newcommand*\rel@kern[1]{\kern#1\dimexpr\macc@kerna}
\newcommand*\widebar[1]{\@ifnextchar^{{\wide@bar{#1}{0}}}{\wide@bar{#1}{1}}}
\newcommand*\wide@bar[2]{\if@single{#1}{\wide@bar@{#1}{#2}{1}}{\wide@bar@{#1}{#2}{2}}}
\newcommand*\wide@bar@[3]{%
  \begingroup
  \def\mathaccent##1##2{%
    \let\mathaccent\save@mathaccent
    \if#32 \let\macc@nucleus\first@char \fi
    \setbox\z@\hbox{$\macc@style{\macc@nucleus}_{}$}%
    \setbox\tw@\hbox{$\macc@style{\macc@nucleus}{}_{}$}%
    \dimen@\wd\tw@
    \advance\dimen@-\wd\z@
    \divide\dimen@ 3
    \@tempdima\wd\tw@
    \advance\@tempdima-\scriptspace
    \divide\@tempdima 10
    \advance\dimen@-\@tempdima
    \ifdim\dimen@>\z@ \dimen@0pt\fi
    \rel@kern{0.6}\kern-\dimen@
    \if#31
      \overline{\rel@kern{-0.6}\kern\dimen@\macc@nucleus\rel@kern{0.4}\kern\dimen@}%
      \advance\dimen@0.4\dimexpr\macc@kerna
      \let\final@kern#2%
      \ifdim\dimen@<\z@ \let\final@kern1\fi
      \if\final@kern1 \kern-\dimen@\fi
    \else
      \overline{\rel@kern{-0.6}\kern\dimen@#1}%
    \fi
  }%
  \macc@depth\@ne
  \let\math@bgroup\@empty \let\math@egroup\macc@set@skewchar
  \mathsurround\z@ \frozen@everymath{\mathgroup\macc@group\relax}%
  \macc@set@skewchar\relax
  \let\mathaccentV\macc@nested@a
  \if#31
    \macc@nested@a\relax111{#1}%
  \else
    \def\gobble@till@marker##1\endmarker{}%
    \futurelet\first@char\gobble@till@marker#1\endmarker
    \ifcat\noexpand\first@char A\else
      \def\first@char{}%
    \fi
    \macc@nested@a\relax111{\first@char}%
  \fi
  \endgroup
}
\newtheorem{thm}{Theorem}
\newtheorem*{thm*}{Theorem}
\newtheorem{lem}[thm]{Lemma}
\newtheorem*{lem*}{Lemma}
\newtheorem{prop}[thm]{Proposition}
\newtheorem*{prop*}{Proposition}
\newtheorem{cor}[thm]{Corollary}
\newtheorem*{cor*}{Corollary}
\newtheorem{conj}[thm]{Conjecture}
\newtheorem*{conj*}{Conjecture}
\theoremstyle{definition}
\newtheorem{defn}[thm]{Definition}
\newtheorem{definition}[thm]{Definition}
\newtheorem*{defn*}{Definition}
\newtheorem{example}[thm]{Example}
\newtheorem*{example*}{Example}
\newtheorem*{examples*}{Examples}
\newtheorem*{alg*}{Algorithm}
\newtheorem{rmk}[thm]{Remark}
\newtheorem*{rmk*}{Remark}
\newtheorem*{rmks*}{Remarks}
\numberwithin{equation}{section}
\numberwithin{thm}{section}
\newcommand{\sym}{\Lambda}
\newcommand\mathcircled[1]{%
  \mathpalette\@mathcircled{#1}%
}
\newcommand\@mathcircled[2]{%
  \tikz[baseline=(math.base)] \node[draw,circle,inner sep=1pt] (math) {$\m@th#1#2$};%
}
\newcommand{\w}{\mathfrak{w}}
\newcommand{\Par}{\operatorname{Par}}
\newcommand{\lowers}{\mathcal{L}}
\newcommand{\g}{\mathfrak{g}}
\newcommand{\tg}{\mathfrak{\tilde g}}
\DeclareMathOperator{\up}{up}
\DeclareMathOperator{\down}{down}
\renewcommand{\path}{\operatorname{path}}
\renewcommand{\top}{\operatorname{top}}
\newcommand{\rootconcat}{\uplus}
\newcommand{\ZZ}{\mathbb Z}
\newcommand{\CC}{\mathcal C}
\newcommand{\core}{\mathfrak{c}}
\newcommand{\partition}{\mathfrak{p}}
\newcommand{\Gr}{\mathrm{Gr}}
\DeclareMathOperator{\supp}{supp}
\newtheorem*{problem*}{Problem}
\newtheorem{conjecture}[thm]{Conjecture}
\newtheorem*{conjecture*}{Conjecture}
\begin{document}
\title{$K$-theoretic Catalan functions}

\author{Jonah Blasiak}
\address{Department of Mathematics, Drexel University, Philadelphia, PA 19104}
\email{jblasiak@gmail.com}

\author{Jennifer Morse}
\address{Department of Math, University of Virginia, Charlottesville, VA 22904}
\email{morsej@virginia.edu}

\author{George H. Seelinger}
\address{Department of Math, University of Virginia, Charlottesville, VA 22904}
\email{ghs9ae@virginia.edu}

\thanks{Authors were supported by NSF Grants DMS-1855784 (J.~B.)
and DMS-1855804 (G.~S. and J.~M.).}

\date{}
\maketitle

\begin{abstract}
We prove that the $K$-$k$-Schur functions are part of a family of inhomogenous symmetric functions
whose top homogeneous components are Catalan functions, the Euler characteristics of
certain vector bundles on the flag variety.  Lam-Schilling-Shimozono
identified the $K$-$k$-Schur functions
as Schubert representatives for $K$-homology of the affine Grassmannian for SL$_{k+1}$.
Our perspective reveals that
the $K$-$k$-Schur functions satisfy a shift invariance property, and we deduce
positivity of their branching coefficients from a positivity
result of Baldwin and Kumar. We further show that a slight adjustment of our formulation for $K$-$k$-Schur functions produces a
second shift-invariant basis which conjecturally has both positive branching
and a rectangle factorization property.
Building on work of Ikeda-Iwao-Maeno, we conjecture that this second basis gives the images of the Lenart-Maeno quantum Grothendieck polynomials under a $K$-theoretic analog of the Peterson isomorphism.
\end{abstract}

 \section{Introduction}

Ungraded $k$-Schur functions from~\cite{LMktab} form a combinatorially defined
basis for a sub-Hopf algebra $\Lambda_{(k)}$ of symmetric functions
that satisfies
many beautiful positivity properties.  Geometrically, they are Schubert representatives
for the homology of the affine Grassmannian ${\rm Gr} = G(\mathbb C((t))/G(\mathbb C[[t]])$
of $G={\rm SL}_{k+1}$~\cite{Lam08}.  Under the Peterson isomorphism~\cite{LStoda},
they are images of the quantum Schubert polynomials
constructed by Fomin, Gelfand, and Postnikov~\cite{fomin-gelfand-postnikov}.
Hence the $k$-Schur structure constants are Gromov-Witten invariants for the quantum cohomology ring of the complete flag variety ${\rm Fl}_{k+1}$.

Over the last several decades, a $K$-theoretic counterpart to this story has been emerging.
The \mbox{$K$-homology} $K_*({\rm Gr})$ is also Hopf isomorphic
to $\Lambda_{(k)}$~\cite{lss}, and Schubert representatives are now
given by a basis of inhomogeneous symmetric functions
called {\it $K$-$k$-Schur functions}, $g_\lambda^{(k)}\in \Lambda_{(k)}$.
They satisfy an elegant Pieri rule and are conjecturally surrounded with positivity
properties.
Foremost is the following branching property.

\begin{conjecture}[\cite{lss}*{Conjecture 7.20(3)}, \cite{morse}*{Conjecture 44}]
\label{branchingconj}
For any partition $\lambda$ with $\lambda_1\leq k$,
\begin{equation}
\label{eq:gbranching}
  g_\lambda^{(k)} = \sum_\mu a_{\lambda \mu} \,g_{\mu}^{(k+1)}
\quad\text{
satisfies \((-1)^{|\lambda|-|\mu|} a_{\lambda \mu} \in \Z_{\geq 0}\).
}
\end{equation}
\end{conjecture}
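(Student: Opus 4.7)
The plan is to realize each $g_\lambda^{(k)}$ geometrically as an Euler characteristic on a flag variety attached to ${\rm SL}_{k+1}$, exploit a shift-invariance identification to rewrite this Euler characteristic using the larger root system of ${\rm SL}_{k+2}$, and then read off the branching coefficients $a_{\lambda\mu}$ from a sign-alternating Schubert expansion supplied by the Baldwin--Kumar positivity theorem.

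First, following the Catalan-function perspective announced in the abstract and promoting it to $K$-theory, I would express $g_\lambda^{(k)}$ as a symmetric-function image of $\chi(X_k, \mathcal{E}_\lambda)$ for a specific vector bundle $\mathcal{E}_\lambda$ on the flag variety $X_k$ of ${\rm SL}_{k+1}$. Concretely, $\mathcal{E}_\lambda$ should be assembled from a Koszul-type complex of line bundles indexed by the root set $\Psi_k$ defining the corresponding Catalan function, so that in cohomology one recovers the Catalan function (the top homogeneous component) and in $K$-theory one picks up exactly the lower-degree corrections dictated by the Pieri rule for $g_\lambda^{(k)}$.

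Second, I would establish the shift-invariance statement advertised in the abstract: when $\lambda_1 \leq k$, the natural inclusion of root data $\Psi_k \hookrightarrow \Psi_{k+1}$ can be lifted to a geometric map between the corresponding flag varieties such that $\mathcal{E}_\lambda$ transfers compatibly and the Euler characteristic is preserved. Expanding the resulting class on the ${\rm SL}_{k+2}$ side in the $K$-theoretic Schubert basis (which under the Lam--Schilling--Shimozono isomorphism corresponds to the $g_\mu^{(k+1)}$), one reads the branching coefficients $a_{\lambda\mu}$ directly off the Schubert expansion. Finally, the Baldwin--Kumar theorem asserts exactly that Euler characteristics of such Koszul-type bundles expand in Schubert classes with coefficients whose signs alternate by the degree difference $(-1)^{|\lambda|-|\mu|}$, which yields the desired conclusion.

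The main obstacle I expect is the second step: pinning down the geometric model precisely enough that $\chi(X_k, \mathcal{E}_\lambda)$ really equals $g_\lambda^{(k)}$ and not a twisted variant, and that its transfer to $X_{k+1}$ genuinely agrees with the symmetric function $g_\lambda^{(k)}$ viewed inside the larger $K$-homology. This requires carefully tracking the root combinatorics of $\Psi_k$ versus $\Psi_{k+1}$, controlling the contributions of the additional simple root, and confirming that the stability property $\lambda_1 \leq k$ is what prevents spurious corrections. A secondary subtlety is converting the Baldwin--Kumar sign, which is naturally phrased in terms of codimension of Schubert varieties, into the symmetric-function sign $(-1)^{|\lambda|-|\mu|}$; this should follow from a clean degree accounting, but it must be done carefully enough that the alternating positivity is preserved under the symmetric-function realization.
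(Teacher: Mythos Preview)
Your proposal has a genuine gap in how it invokes Baldwin--Kumar, and it diverges substantially from the paper's actual argument.

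The Baldwin--Kumar theorem is a statement about sign-alternation of Schubert structure constants in the (equivariant) $K$-theory of Kac--Moody flag varieties; it does not assert that Euler characteristics of ``Koszul-type bundles'' expand with alternating signs in the Schubert basis. Your plan to build $\mathcal{E}_\lambda$ on a finite flag variety $X_k$, transfer it geometrically to $X_{k+1}$, and then cite Baldwin--Kumar for the expansion of $\chi(X_{k+1},\mathcal{E}_\lambda)$ therefore lacks a usable positivity input: there is no theorem guaranteeing that an arbitrary (even well-behaved) class on a flag variety has a sign-alternating Schubert expansion. Moreover, the $g_\lambda^{(k)}$ are Schubert classes in $K_*({\rm Gr}_{\SL_{k+1}})$, the \emph{affine} Grassmannian, not Euler characteristics on a finite flag variety; the paper never establishes the latter description, and the ``Catalan function $=$ Euler characteristic'' picture applies only to the top homogeneous component, not to the full inhomogeneous $g_\lambda^{(k)}$.

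What the paper actually does is purely algebraic. It first proves (the bulk of the work) a raising-operator formula $g_\lambda^{(k)} = K(\Delta^k(\lambda);\Delta^{k+1}(\lambda);\lambda)$ by matching Pieri rules. From this formula, shift invariance $G_{1^\ell}^\perp g_{\lambda+1^\ell}^{(k+1)} = g_\lambda^{(k)}$ follows by a short symmetric-function computation (the root ideals satisfy $\Delta^m(\lambda+1^\ell)=\Delta^{m-1}(\lambda)$, and $e_\ell^\perp$ acts predictably on the Katalan expression). The crucial observation is then that shift invariance identifies the branching coefficients $a_{\lambda\mu}$ with the structure constants $c_{\lambda+1^\ell,\mu}$ in the product $G_{1^\ell}^{(k+1)} G_\mu^{(k+1)} = \sum_\gamma c_{\gamma\mu} G_\gamma^{(k+1)}$; these \emph{are} genuine Schubert structure constants in $K_*({\rm Gr}_{\SL_{k+2}})$, and Baldwin--Kumar applies to them directly. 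So the role of Baldwin--Kumar is to control a product of two Schubert classes, not an Euler characteristic, and the role of shift invariance is to reduce branching to a dual Pieri problem rather than to transport a bundle between flag varieties.
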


Proofs for positivity results have not been accessible from the previous geometric and algebraic descriptions
 of $K$-$k$-Schur functions.
We overcome this with an explicit raising operator formula for $g_\lambda^{(k)}$ which enables us to settle Conjecture~\ref{branchingconj}
and to derive new properties of the basis.

We prove this formula by connecting it to the Pieri rule for $g_\lambda^{(k)}$
through careful analysis of intermediate raising operator objects
between  $g_\lambda^{(k)}$ and  $g_{1^r} g_\lambda^{(k)}$.
This powerful approach to
Schubert calculus was initiated
in \cite{tamvakis,bkt1,bkt2}, further leveraged in~\cite{anderson,anderson-fulton}.
We advance this program
using methods of~\cite{catalans}, which came out of the study of Euler 
characteristics of vector bundles
on the flag variety \cite{BroerNormality, SW, CH, pany}.
Therein,
the $k$-Schur basis is identified with a subfamily of symmetric functions called
Catalan functions.  These functions are defined by a raising operator formula and
are indexed by pairs \((\Psi,\gamma)\), where $\Psi$ is one of Catalan many
upper order ideals in the set of positive \(A_{\ell-1}\) roots, $\Delta^+_\ell$,
and \(\gamma \in \Z^\ell\).

We extend the Catalan functions to an inhomogenous family of symmetric functions
using additional information from a multiset $M$ supported on $\{1,\ldots,\ell\}$.
These functions, $K(\Psi,M,\gamma),$ are called Katalan
functions.
Computer experimentation leads us to propose natural conditions for
Schur positive expansions, as well as positive expansions (up to
predictable sign) in the basis of dual stable Grothendieck polynomials $\{g_\lambda\}$, Hall-dual to the basis of Fomin-Kirillov stable
Grothendieck polynomials $\{G_\mu\}$~\cite{FK,FK2,LasG}.

We prove that the $K$-$k$-Schur functions are a distinguished subfamily of Katalan functions.
The simplicity of our formula reveals that the $K$-$k$-Schur basis satisfies {\it shift invariance}:
\begin{equation}
\label{shiftinvariance}
  G_{1^\ell}^\perp\, g_{\lambda+1^\ell}^{(k+1)} = g_\lambda^{(k)}\,.
\end{equation}
This remarkable property implies that the branching coeffients of~\eqref{eq:gbranching} are
none other than a subset of dual Pieri coefficients.  From this, a positivity result of Baldwin and
Kumar~\cite{baldwin-kumar} enables us to prove several conjectures about $K$-$k$-Schur functions,
including positive branching.

Another application of the Katalan formulation for $K$-$k$-Schur functions
involves the quantum $K$-theory ring, a deformation of the Grothendieck ring of coherent sheaves
on $\Fl_n$ studied by Givental and Lee~\cite{givental-lee}.
Kirillov and Maeno~\cite{KM} proposed a presentation  $\mathcal{QK}(\Fl_{n})$
for ${QK}(Fl_{n})$ which was recently established by Anderson-Chen-Tseng~\cite{act}.
Lenart and Maeno~\cite{LenartMaeno} introduced {\it quantum Grothendieck polynomials}
$\mathfrak G_w^Q$ as potential representatives for the Schubert basis,
just confirmed in~\cite{lns}.

Using Ruijsenaars’s relativistic Toda lattice,
Ikeda-Iwao-Maeno~\cite{IIM} %
produced an explicit ring isomorphism $\Phi$ between localizations of $K_*(\Gr)$ and
$\mathcal{QK}(\Fl_{k+1})$
and conjectured that the images of quantum Grothendieck polynomials
expand unitriangularly into $K$-$k$-Schur functions with coefficients having predictable sign;
building on this work, Ikeda conjectured a precise description for the
images.
Kato~\cite{kato} also considers related ideas in general type.

\begin{conj}[\cite{IIM}*{Conjecture 1.8}, \cite{ikedaprivate}]
\label{eq:IIMconj}
For $w\in S_{k+1}$,
\begin{align}
\label{gwiggle}
  \Phi(\G_w^Q)  = \frac{\tilde {g}_w}{\prod_{d \in \text{\rm Des}(w)} g_{{(k+1-d)^d}}}\,,
  \qquad  \text{ for \  }
\tilde g_w := (1-G_1^\perp) \bigg(\sum_{\mu_1 \le k,\,
w_\mu\leq w_\lambda } g_\mu^{(k)}
\bigg) \in\Lambda_{(k)}\,,
\end{align}
where $\lambda=\theta(w)^{\omega_k}$ is a partition with  $\lambda_1\le k$, defined in \S\ref{ss Katalan for quantum},
$w_\lambda$ denotes  the minimal coset representative
of \(S_{k+1}\) in  $\eS_{k+1}$ associated to  $\lambda$ (see \S\ref{ss raising op KkSchur}), and $\le$ denotes Bruhat order
on \(\eS_{k+1}\).
\end{conj}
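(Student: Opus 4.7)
The plan is to leverage the Katalan raising-operator formalism developed in this paper alongside the relativistic-Toda construction of $\Phi$ from \cite{IIM} and the Kirillov--Maeno/Anderson--Chen--Tseng presentation \cite{KM,act} of $\mathcal{QK}(\Fl_{k+1})$. First I would repackage $\tilde{g}_w$ at the raising-operator level: the inner sum $\sum_{\mu_1 \le k,\,w_\mu \le w_\lambda} g_\mu^{(k)}$ runs over a lower Bruhat interval in $\eS_{k+1}$, so shift invariance \eqref{shiftinvariance} together with the Katalan formula for each $g_\mu^{(k)}$ should collapse the sum into a compact Katalan expression. The operator $(1 - G_1^\perp)$ then acts in a controlled way on this expression since $G_1^\perp$ has an explicit effect on raising-operator expressions.

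Next, I would establish the rectangle factorization needed for the right-hand side to make sense: for each $d \in \text{Des}(w)$, the factor $g_{(k+1-d)^d}$ divides $\tilde{g}_w$ in $\Lambda_{(k)}$. This is the $K$-theoretic analog of the rectangle factorization of $k$-Schur functions underlying the classical Peterson isomorphism, and I would prove it by induction on $w$, propagating the factorization through Bruhat covers using the Pieri rule for $K$-$k$-Schur functions together with the raising-operator computations from the first step. Once the quotient is well-defined, it remains to identify it with $\Phi(\G_w^Q)$. My approach would be to construct $K$-theoretic divided-difference (Demazure-like) operators on the $\tilde{g}_w$-side that intertwine, under $\Phi$, with the operators on $\mathcal{QK}(\Fl_{k+1})$ characterizing $\G_w^Q$ in \cite{lns}, and then verify a single base case---for instance $w = w_0$, where $\G_{w_0}^Q$ has an explicit closed form---to conclude.

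The main obstacle, I expect, is precisely the construction and verification of these intertwining operators. The operators characterizing $\G_w^Q$ mix quantum parameters $Q_i$ with Grothendieck generators, and $\Phi$ sends the $Q_i$ to intricate rational expressions in affine Schubert classes, so matching them requires identifying the natural lowering operators on the Katalan side---essentially compositions of Pieri-adjoints with $G_j^\perp$---with these quantum divided differences pulled back through $\Phi$. This is likely to demand new combinatorial identities among Katalan functions indexed by Bruhat intervals in $\eS_{k+1}$, and the shift invariance and conjectured positive branching of the $\tilde{g}$-basis would play a central role in making such identities accessible.
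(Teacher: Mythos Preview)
The statement you are attempting to prove is labeled a \emph{conjecture} in the paper, and the paper does \emph{not} prove it in general. The paper establishes only the Grassmannian case (Proposition~\ref{k-rectangle-bounded-qkt}, for $w$ with $\text{Des}(w)=\{d\}$), and its argument there is entirely different from and much more modest than what you outline. Specifically, for Grassmannian $w$ the partition $\theta(w)'$ lies in a $k$-rectangle, so Corollary~\ref{con:hookg} gives $g_{\theta(w)'}^{(k)} = g_{\theta(w)'}$; the identification of $\Phi(\G_w^Q)$ with $g_{\theta(w)'}/g_{R_d}$ is then quoted from \cite{IIM}, and the equality $\tilde{g}_w = g_{\theta(w)'}$ follows from a result of Takigiku on the automorphism $1-G_1^\perp$. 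No intertwining operators, no induction through Bruhat covers, no rectangle factorization for general $w$ are used or proved.

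Your proposal is therefore a strategy for an open problem, and it has genuine gaps even as such. The rectangle factorization you plan to establish in step two is precisely Conjecture~\ref{c tg}(f) in the paper, stated there as open; your proposed inductive proof via Bruhat covers and the $K$-$k$-Schur Pieri rule is not obviously workable, since the Pieri rule governs multiplication by $g_{1^r}$, not by $g_{R_d}$, and passing from one to the other is itself nontrivial. More seriously, your third step---constructing Demazure-like operators on the $\tilde{g}$-side that intertwine under $\Phi$ with the quantum divided differences characterizing $\G_w^Q$---is the heart of the matter, and you correctly identify it as the main obstacle, but you offer no concrete mechanism for building these operators beyond the hope that shift invariance and conjectural positive branching will help. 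Since $\Phi$ sends the quantum parameters to rational expressions involving the $\sigma_i$ (which are themselves Bruhat-interval sums), matching operators across $\Phi$ would require control over $\tilde{g}$-functions far beyond what the paper provides.
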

To give geometric context for this conjecture, under the Hopf algebra isomorphism
$\sym_{(k)} \to K_*({\rm Gr})$,
the sum \(\sum_{\mu_1 \le k,\,w_\mu\leq w_\lambda } g_\mu^{(k)}\) maps to the class of the structure
sheaf of the Schubert variety \(X_{w_\lambda} \subset \Gr\),
whereas \(g^{(k)}_\lambda\) maps to the
class of the ideal sheaf of the boundary \(\partial X_{w_\lambda}\);
see \cite{llms-conj-peterson-isom}*{Theorem 1} and \cite{lss}*{Theorems 5.4 and 7.17(1)}.

We conjecture an explicit operator formula for the  $\tilde{g}_w$'s by realizing them as a
subfamily of Katalan functions; it requires only a slight adjustment to our  Katalan description of $K$-$k$-Schur functions.

We are also able to
verify Conjecture~\ref{eq:IIMconj} for Grassmannian permutations,
completing the proof strategy of \cite{IIM}, by establishing the following missing ingredient,
which is an immediate consequence of the Katalan formulation for $K$-$k$-Schur functions.

\begin{conj}[\cite{morse}]
For a partition $\lambda$ where $\lambda_1+\ell(\lambda)-1\leq k$, $g_\lambda^{(k)}=g_\lambda\,.$
\end{conj}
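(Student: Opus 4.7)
The plan is to deduce the identity directly from the Katalan formulation of $g_\lambda^{(k)}$ promised in the Introduction. Writing $\ell = \ell(\lambda)$, that formulation expresses $g_\lambda^{(k)} = K(\Psi_k(\lambda), M_k(\lambda), \lambda)$ for a specific upper order ideal $\Psi_k(\lambda) \subset \Delta^+_\ell$ and multiset $M_k(\lambda)$ on $\{1,\ldots,\ell\}$, both determined from the $k$-bounded data of $\lambda$. In parallel, I would identify the ``unrestricted'' Katalan presentation $g_\lambda = K(\Delta^+_\ell, M(\lambda), \lambda)$, where $\Delta^+_\ell$ is the full set of positive $A_{\ell-1}$-roots and $M(\lambda)$ is the full multiset corresponding to the Jacobi-Trudi-type raising-operator expansion of $g_\lambda$ in complete homogeneous symmetric functions. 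This second presentation should be an essentially formal consequence of the definition of the Katalan operator $K$ specialized to the case $\Psi = \Delta^+_\ell$.

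First I would check that under the hypothesis $\lambda_1 + \ell - 1 \le k$ both restrictions trivialize. The $k$-dependence of $\Psi_k(\lambda)$ amounts to discarding roots $\alpha_{ij}$ with $1 \le i < j \le \ell$ that violate a hook-type inequality of the form $\lambda_i - \lambda_j + (j-i) \le k$; since $\lambda_i \le \lambda_1$, $\lambda_j \ge 0$, and $j - i \le \ell - 1$, the assumption $\lambda_1 + \ell - 1 \le k$ forces every such inequality to hold, so $\Psi_k(\lambda) = \Delta^+_\ell$. The same bound governs which rows are truncated from the multiset, yielding $M_k(\lambda) = M(\lambda)$ simultaneously. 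Comparing Katalan data row by row then gives $g_\lambda^{(k)} = K(\Psi_k(\lambda), M_k(\lambda), \lambda) = K(\Delta^+_\ell, M(\lambda), \lambda) = g_\lambda$.

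The main obstacle is the bookkeeping in matching precise conventions: pinning down exactly which roots and multiset entries encode the $k$-truncation in the Katalan description of $g_\lambda^{(k)}$, and verifying that each is killed by the inequality $\lambda_1 + \ell(\lambda) - 1 \le k$. Once those definitions are lined up, the argument reduces to the single estimate above, mirroring the classical Catalan-function proof in~\cite{catalans} that $s_\lambda^{(k)} = s_\lambda$ in the same hook range, now with the multiset decoration $M$ carried along. No further combinatorial input beyond this elementary inequality manipulation should be required.
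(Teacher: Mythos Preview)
Your overall strategy is exactly the one the paper uses, but your conventions are inverted in a way that makes the written claims false as stated. In the paper's setup, the Katalan formula is $g_\lambda^{(k)}=K(\Delta^{k}(\lambda);\Delta^{k+1}(\lambda);\lambda)$ with $\Delta^{k}(\lambda)=\{(i,j)\in\Delta^+_\ell : k-\lambda_i+i<j\}$, and the base case is $g_\lambda=K(\emptyset;\emptyset;\lambda)$, not $K(\Delta^+_\ell;M(\lambda);\lambda)$. (The raising operator product in the alternate description runs over the \emph{complement} $\Delta^+_\ell\setminus\Psi$, so $\Psi=\emptyset$ is what gives the full Jacobi--Trudi style product applied to $k_\lambda$; taking $\Psi=\Delta^+_\ell$ instead yields $k_\lambda$, and with $M=L(\Delta^+_\ell)$ yields $h_\lambda$.) Likewise, the correct membership condition for $(i,j)\in\Delta^{k}(\lambda)$ is $\lambda_i+(j-i)>k$, with no $\lambda_j$ term.

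With these corrections the argument collapses to the paper's one-liner: if $\lambda_1+\ell-1\le k$ then for every $(i,j)\in\Delta^+_\ell$ we have $\lambda_i+(j-i)\le \lambda_1+(\ell-1)\le k$, hence $\Delta^{k}(\lambda)=\emptyset$ and a fortiori $\Delta^{k+1}(\lambda)=\emptyset$, so $g_\lambda^{(k)}=K(\emptyset;\emptyset;\lambda)=g_\lambda$. So your plan is right in spirit; just flip ``full'' to ``empty'' throughout and drop the spurious $\lambda_j$ from the hook inequality.
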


\section*{Acknowledgements}
We thank Takeshi Ikeda for generously sharing his ideas building on the work of Ikeda-Iwao-Maeno.  This inspired and enabled us to check Conjecture~\ref{c tg}.
We also thank Mark Shimozono for pointing out the work of Baldwin and Kumar~\cite{baldwin-kumar} as well as the reference \cite{llms-conj-peterson-isom}.  This research was supported by computer exploration, using the open source mathematical system SageMath~\cite{sage}.

\section{Main Results}\label{section 2}

We work in the ring $\Lambda =\ZZ[e_1,e_2,\ldots] =  \ZZ[h_1,h_2,\dots]$ of symmetric functions
in infinitely many variables $\mathbf{x} = (x_1, x_2, \dots)$, where
$e_d = e_d(\mathbf{x})= \sum_{i_1<\cdots < i_d} x_{i_1}\cdots x_{i_d}$
and $h_d = \sum_{i_1\leq\cdots \leq i_d} x_{i_1}\cdots x_{i_d}$.
Set $h_0 = 1$  and $h_d = 0$ for  $d < 0$ by convention.
For $\gamma\in\mathbb Z^\ell$, define $h_\gamma = h_{\gamma_1}\cdots h_{\gamma_\ell}$
and define Schur functions,
\begin{align}
\label{eq:s-gamma}
s_\gamma = \det( h_{\gamma_i + j-i} )_{1 \le i, j \le \ell}\,.
\end{align}

Fix $k\in\mathbb Z_{>0}$ and $\ell\in\mathbb Z_{\geq 0}$ throughout.
Set $\sym_{(k)}= \ZZ[h_1,\dots, h_k] \subset \sym$.
Let \(\Par^k_\ell = \{(\mu_1,\ldots,\mu_\ell) \in \Z^\ell \st k \geq
\mu_1 \geq \cdots \geq \mu_\ell \geq 0\}\) denote the set of
partitions contained in the \(\ell \times k\) rectangle and let
\(\Par^k\) be the set of partitions \(\mu\) with \(\mu_1 \leq k\).
The \de{length} \(\ell(\mu)\) is always the number of nonzero parts of \(\mu\).

\subsection{Katalan functions: definition and first properties}

This work builds off previous studies of symmetric functions known as Catalan functions,
introduced in \cite{CH, pany} and studied further in \cite{catalans,ksplitcatalans}.
Catalan functions involve a parameter $t$,
but we will only work with their $t=1$ specialization as this is necessary for
applications to affine Schubert calculus.
We define Catalan functions from a description in \cite[Proposition~4.7]{catalans}.
Consider the set of labels
$\Delta^+_\ell= \Delta^+ := \big\{(i,j) \mid 1 \le i < j \le \ell \big\}$
for the positive roots of $A_{\ell-1}$. %
A \emph{root ideal}  $\Psi$ is an upper order ideal of the poset $\Delta^+$ with partial order
given by $(a,b) \leq (c,d)$ when $a\geq c$ and $b\leq d$.
The complement $\Delta^+ \setminus \Psi$ is a lower order ideal of  $\Delta^+$.
A {\it Catalan function}, indexed by a pair $(\Psi, \gamma)$ consisting of a root ideal
$\Psi$ and a weight $\gamma \in \ZZ^{\ell}$, is defined by
\begin{align}
\label{eq:H-definition-CHL}
H(\Psi;\gamma)
&= \prod_{(i,j) \in \Delta^+ \setminus \Psi} (1-{R}_{i j})h_\gamma\,,
\end{align}
where the raising operator $R_{i j}$ acts on subscripts
by  $R_{i j} h_\gamma = h_{\gamma + \epsilon_{i} - \epsilon_{j}}$
and \(\epsilon_i\) is the unit vector with a \(1\) in position \(i\) and \(0\)'s elsewhere.
Below we also use raising operators on other elements indexed by weights in  $\Z^\ell$.
Raising operators were introduced by Young \cite{young} and formalized rigourously  by Garsia-Remmel \cite{garsia-remmel1,garsia-remmel2}.
Their standard usage is somewhat informal; they will be treated formally in~Section~\ref{section 3} (in a different way from Garsia-Remmel).

Our work requires the following inhomogeneous version of the $h_m$'s.
For \(m,r \in \Z\), define
\[
      k_m^{(r)} = \sum_{i=0}^m \binom{r+i-1}{i} h_{m-i}\,,
    \]
where  $\binom{n}{i} = \frac{n(n-1)\cdots (n-i+1)}{i!}$  and  $\binom{n}{0} = 1$ for $n\in \Z, i\in \Z_{\ge 1}$;
thus note that \(k_m^{(0)} = h_m\) and \(k_{m}^{(r)} = 0\) when $m<0$.
For \(\gamma \in \Z^\ell\), let
$g_\gamma = \,\det(k_{\gamma_i+j-i}^{(i-1)})_{1 \leq i,j \leq \ell}$.
When $\gamma$ is a partition, these are the {\it dual stable Grothendieck polynomials},
first studied implicitly in~\cite{lenart} and determinantally
formulated in~\cite{lascouxnaruse}.
We use an alternative characterization, proved in Section~\ref{proof-of-dual-grothendieck-jacobi-trudi} of the Appendix:
\begin{equation}
  \label{eq:dual-grothendieck-jacobi-trudi}
    g_\gamma \,=\,\prod_{1\leq i < j\leq\ell} (1-R_{ij}) k_\gamma\,,
\;\,\text{
where
\(
  k_\gamma := k_{\gamma_1}^{(0)} k_{\gamma_2}^{(1)} \cdots k_{\gamma_\ell}^{(\ell-1)}\,.
\)
}
\end{equation}

\begin{defn}\label{def:Kat-g}
  For a root ideal \(\Psi \subset \Delta^+_\ell\), a multiset \(M\)
  with \(\supp(M) \subset \{1,\ldots,\ell\}\), and \(\gamma \in \Z^\ell\), we define the
  \de{Katalan function}
  \begin{equation}
\label{eq:Kat-g}
    K(\Psi; M; \gamma) := \prod_{j \in M} (1-L_j)
    \prod_{(i,j) \in \Psi} (1-R_{ij})^{-1} g_\gamma\,,
\end{equation}
  where %
the \de{lowering operator}  $L_j$ acts on the subscripts of  $g_\gamma\in \Lambda$ by \(L_j g_\gamma=
g_{\gamma- \epsilon_j}\).
\end{defn}

The following alternative formulation  gives additional insight (see \eqref{eq:formal-def-of-Kat}--\eqref{eq:formal2} for the proof).

\begin{prop}
\label{def:Kat}
  For a root ideal \(\Psi \subset \Delta^+_\ell\), a multiset \(M\)
  with \(\supp(M) \subset \{1,\ldots,\ell\}\), and \(\gamma\in
  \Z^\ell\), %
 \[
    K(\Psi; M; \gamma) = \prod_{j \in M} (1-L_j)
    \prod_{(i,j) \in \Delta^+ \setminus \Psi} (1-R_{ij})\, k_\gamma\,.
  \]
\end{prop}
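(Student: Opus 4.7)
The plan is to derive the alternative formulation by direct substitution. Starting from Definition \ref{def:Kat-g},
\[
 K(\Psi; M; \gamma) \;=\; \prod_{j \in M} (1-L_j)\,\prod_{(i,j) \in \Psi} (1-R_{ij})^{-1}\, g_\gamma,
\]
I would substitute the raising operator expression for $g_\gamma$ from \eqref{eq:dual-grothendieck-jacobi-trudi}:
\[
 g_\gamma \;=\; \prod_{(i,j)\in\Delta^+}(1-R_{ij})\,k_\gamma
 \;=\; \prod_{(i,j)\in\Psi}(1-R_{ij})\,\prod_{(i,j)\in\Delta^+\setminus\Psi}(1-R_{ij})\,k_\gamma,
\]
the splitting being harmless because the raising operators $R_{ij}$ all commute as subscript shifts on $\Z^\ell$. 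Then the $(1-R_{ij})$ factors for $(i,j)\in\Psi$ will cancel against the $(1-R_{ij})^{-1}$ factors, producing precisely the expression claimed in Proposition \ref{def:Kat}.

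The lowering operators $L_j$ commute with the raising operators since shifting down in one coordinate commutes with shifting up along another direction, so the $\prod_{j\in M}(1-L_j)$ prefix passes through untouched and the cancellation takes place entirely in the raising-operator part. One could equivalently stage the argument as the identity
\[
 \prod_{(i,j)\in\Psi}(1-R_{ij})^{-1}\,g_\gamma \;=\; \prod_{(i,j)\in\Delta^+\setminus\Psi}(1-R_{ij})\,k_\gamma,
\]
from which the proposition follows by applying $\prod_{j\in M}(1-L_j)$ to both sides.

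The only real subtlety, and what I expect to be the main obstacle, is justifying the formal cancellation of $(1-R_{ij})^{-1}(1-R_{ij}) = 1$ rigorously. Informally this is obvious, but $(1-R_{ij})^{-1}$ must be interpreted as the geometric series $\sum_{n\ge 0} R_{ij}^n$, and one has to confirm the series acts well-definedly on $k_\gamma$-type expressions — i.e., that for each monomial $k_\delta$ appearing in the expansion, only finitely many shifts contribute (which holds because $k_m^{(r)}=0$ for $m<0$, so repeated application of $R_{ij}$ eventually annihilates). This verification, along with checking that such series manipulations commute in the expected way, is exactly the purpose of the formal framework for raising operators set up in Section \ref{section 3}; the cleanest presentation would defer to that framework, citing \eqref{eq:formal-def-of-Kat}--\eqref{eq:formal2} to legitimize the termwise cancellation. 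With that formalism in hand, the proof collapses to the one-line calculation above.
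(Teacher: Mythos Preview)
Your proposal is correct and matches the paper's approach: the paper's proof is precisely the cancellation you describe, carried out inside the formal power series ring $\mathbb{A}$ via \eqref{eq:formal-def-of-Kat}--\eqref{eq:formal2}, where $(1-z_i/z_j)^{-1}(1-z_i/z_j)=1$ is an honest ring identity and the maps $g$ and $\kappa$ implement the passage from monomials to $g_\gamma$ and $k_\gamma$ respectively. Your identification of the only subtlety---that the operator cancellation must be justified through the formal framework of Section~\ref{section 3}---is exactly right, and the paper handles it in the way you anticipate.
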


The family of Katalan functions contains several well-studied symmetric
function bases.

\begin{prop}\label{prop:extremal}
  Let \(\gamma \in \Z^\ell\).
  \begin{enumerate}
  \item The Katalan functions contain the family of Catalan funcitons:
    \(K(\Psi;\Delta^+_\ell;\gamma) = H(\Psi;\gamma)\)
    for any root ideal \(\Psi \subset \Delta^+_\ell\).
     In particular,  \(K(\emptyset;\Delta^+_\ell;\gamma) =
    s_\gamma\) and \(K(\Delta^+_\ell;\Delta^+_\ell;\gamma) = h_\gamma\).
  \item
  \(K(\emptyset;\emptyset;\gamma) = g_\gamma\).
  \item \(K(\Delta^+_\ell;\emptyset;\gamma) = k_\gamma\).
    \end{enumerate}
\end{prop}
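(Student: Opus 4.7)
The plan is to unpack each part from the two equivalent formulations of $K(\Psi;M;\gamma)$ (Definition~\ref{def:Kat-g} and Proposition~\ref{def:Kat}), together with the raising operator Jacobi--Trudi identity $s_\gamma = \prod_{(i,j)\in\Delta^+_\ell}(1-R_{ij})\, h_\gamma$ (which is simply the determinantal definition~\eqref{eq:s-gamma} in raising-operator form) and the formula~\eqref{eq:dual-grothendieck-jacobi-trudi} for $g_\gamma$.

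Parts~(b) and~(c) are immediate. For~(b), setting $\Psi = M = \emptyset$ in Proposition~\ref{def:Kat} makes $\prod_{j\in M}(1-L_j)$ the identity and leaves $\Delta^+\setminus\Psi = \Delta^+_\ell$, giving $K(\emptyset;\emptyset;\gamma) = \prod_{(i,j)\in\Delta^+_\ell}(1-R_{ij})\,k_\gamma = g_\gamma$ by~\eqref{eq:dual-grothendieck-jacobi-trudi}. For~(c), taking $\Psi = \Delta^+_\ell$ and $M = \emptyset$ renders both products empty, yielding $k_\gamma$ directly.

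The substantive case is~(a). I will interpret $M = \Delta^+_\ell$ as the multiset of second coordinates $\{j : (i,j)\in\Delta^+_\ell\}$, so $j$ appears with multiplicity $j-1$ (consistent with the convention $\prod_{(i,j)\in\Psi}(1-L_j) = \prod_{j\in\Psi}(1-L_j)$). The key identity to establish is
\begin{equation*}
\prod_{j\in \Delta^+_\ell}(1-L_j)\, k_\gamma \;=\; h_\gamma.
\end{equation*}
I will derive this from the generating function observation $\sum_{m\ge 0} k_m^{(r)} z^m = (1-z)^{-r}\sum_{m\ge 0} h_m z^m$, which recasts $k_{\gamma_i}^{(i-1)} = (1-L_i)^{-(i-1)} h_{\gamma_i}$ and hence $k_\gamma = \prod_{i=1}^\ell (1-L_i)^{-(i-1)}\, h_\gamma$; the $(1-L_j)^{j-1}$ factors on the left then cancel the $(1-L_i)^{-(i-1)}$ factors exactly. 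Since each $L_j$ commutes with each $R_{ij}$ (both shift subscripts termwise, on independent coordinates), Proposition~\ref{def:Kat} then yields
\begin{equation*}
K(\Psi;\Delta^+_\ell;\gamma) \;=\; \prod_{(i,j)\in\Delta^+\setminus\Psi}(1-R_{ij}) \prod_{j\in \Delta^+_\ell}(1-L_j)\, k_\gamma \;=\; \prod_{(i,j)\in\Delta^+\setminus\Psi}(1-R_{ij})\, h_\gamma \;=\; H(\Psi;\gamma).
\end{equation*}
The two parenthetical special cases follow at once: $\Psi=\emptyset$ recovers the raising-operator Jacobi--Trudi identity for $s_\gamma$, and $\Psi = \Delta^+_\ell$ collapses the raising-operator product to the identity, leaving $h_\gamma$.

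No step poses a real obstacle; the only care required is in the multiset convention for $M = \Delta^+_\ell$ and in the formal manipulation of the infinite series $(1-L_i)^{-(i-1)}$, both of which are justified by the raising and lowering operator formalism developed in Section~\ref{section 3}.
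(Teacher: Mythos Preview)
Your proof is correct and follows essentially the same route as the paper: both reduce part~(a) to the key identity $\prod_{j\in\Delta^+_\ell}(1-L_j)\,k_\gamma = h_\gamma$, which you derive via the generating function $\sum_{m\ge 0} k_m^{(r)} z^m = (1-z)^{-r}\sum_{m\ge 0} h_m z^m$ while the paper uses the equivalent Pascal-type recursion $k_m^{(r)} - k_{m-1}^{(r)} = k_m^{(r-1)}$ (equation~\eqref{pascal-kh-identity}). One small caveat: the ring $\mathbb{A}$ of Section~\ref{section 3} does not actually contain $(1-1/z_i)^{-1}$, so your inverse series $(1-L_i)^{-(i-1)}$ is justified not by that formalism but simply because it terminates when applied to $h_\gamma$ (since $h_m=0$ for $m<0$); the paper avoids this minor issue by working in the forward direction, iterating $(1-L_j)k_m^{(r)} = k_m^{(r-1)}$ down to $h_m$.
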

\begin{proof}
  Statement (b) is immediate from Definition~\ref{def:Kat-g} and (c) is immediate from
  Proposition~\ref{def:Kat}. To prove (a), for \(m,r \in \Z\), we note that, by Pascal's formula,
\begin{equation}
\label{pascal-kh-identity}
    k_{m-1}^{(r)} + k_m^{(r-1)}
    = \sum_{i=0}^{m} \left[\binom{r+i-2}{i-1} +
    \binom{r+i-2}{i}\right] h_{m-i}
    =
    \sum_{i=0}^m
    \binom{r+i-1}{i} h_{m-i} = k_m^{(r)}\,.
\end{equation}
Therefore,
\(\prod_{(i,j) \in \Delta^+} (1-L_j) k_\gamma= h_\gamma\) and thus (a) follows from Proposition~\ref{def:Kat} and \eqref{eq:H-definition-CHL}.
\end{proof}

Although Katalan functions
are defined for arbitrary multisets, we mainly work with those where the associated multiset comes from a root
ideal \(\lowers \subset \Delta^+_\ell\) via the function
\begin{equation}
\label{eq:root-ideal-to-multiset}
L(\lowers) = \Dunion_{(i,j) \in \lowers} \{j\} \,.
\end{equation}
In this scenario, we use the shorthand \(K(\Psi;\lowers;\gamma) = K(\Psi;L(\lowers);\gamma)\).

\subsection{A raising operator formula for  $K$-$k$-Schur functions}
\label{ss raising op KkSchur}

In \cite{catalans}, the \de{$k$-Schur functions} $\{s_\mu^{(k)}\}_{\mu \in \Par^k}$ were identified with
a subfamily of Catalan functions, namely $s_\mu^{(k)}=H(\Delta^{k}(\mu);\mu)$ where
\begin{align}\label{e Deltak def}
\Delta^{k}(\mu) = \{(i,j) \in \Delta^+_{\ell} \mid  k-\mu_i + i < j\}\,.
\end{align}

\begin{defn}\label{def:KatKks}
For \(\lambda \in \Par_\ell^k\), define the \de{$k$-Schur Katalan function} by
\[ \g_\lambda^{(k)} = K(\Delta^{k}(\lambda); \Delta^{k+1}(\lambda); \lambda) \,.
  \]
\end{defn}

We show that the $k$-Schur Katalan functions are the $K$-$k$-Schur functions. %
This operator formula is considerably more direct and explicit than any previously known description of the $K$-$k$-Schur functions
and  readily resolves
several outstanding conjectures, including positive branching.

The $K$-$k$-Schur functions are
defined using the \de{affine symmetric group} \(\eS_{k+1}\),
the group with generators \(\{s_i \st i \in I\}\) for \(I = \{0,\ldots,k\}\) subject
to the relations \(s_i^2 = id,\, s_i s_{i+1} s_i = s_{i+1} s_i s_{i+1},
\,s_i s_j = s_j s_i\) for \(i-j \not\equiv 0,\pm 1\), with all indices
considered modulo \(k+1\).  The \de{length}  $\ell(w)$ of $w \in \eS_{k+1}$
is the minimum $m$ such that $w=s_{i_1}s_{i_2}\cdots s_{i_m}$ for some $i_j\in I$;
any expression for $w$ with $\ell(w)$ generators is said to be \de{reduced}.
The set of \de{affine Grassmannian elements} \(\eS_{k+1}^0\) are
the  minimal length coset representatives of
\(S_{k+1}\) in \(\eS_{k+1}\), where
\(S_{k+1} = \langle s_1, \ldots, s_{k} \rangle \subgroup \eS_{k+1}\).
There is a bijection
\(\w \from \Par^k \to \eS_{k+1}^0,\) given by  \(\lambda \mapsto w_\lambda \) for
$w_\lambda = (s_{\lambda_\ell-\ell} \cdots s_{-\ell+1}) \cdots (s_{\lambda_2-2} \cdots s_{-1})(s_{\lambda_1-1} \cdots s_0)$  where $\ell = \ell(\lambda)$ (see \cite[\S8.2]{LMtaboncores}).
For example, for  $k=3$, $w_{3221} = s_1 \, s_3s_2 \, s_0s_3 \, s_2 s_1s_0$.

The \emph{0-Hecke algebra}  $H_{k+1}$ is the free  $\ZZ$-algebra generated by \(\{T_i \st i \in I\}\)
with the same relations as $\eS_{k+1}$ except  $T_i^2 = -T_i$ in place of  $s_i^2 = id$.
It has a  $\ZZ$-basis  $\{T_w \mid w \in \eS_{k+1}\}$, where
$T_w = T_{i_1} T_{i_2}\cdots T_{i_m}$ for any reduced expression
$w=s_{i_1}s_{i_2}\cdots s_{i_m}$.

The following descriptions of the $K$-$k$-Schur functions  $g^{(k)}_\lambda$
are implicit in~\cite{lss, morse} and are verified in Section~\ref{sec:eq-of-Kks}
of the Appendix.
An element $w\in \eS_{k+1}$ is  \de{cyclically increasing}
if it can be written as $w=s_{i_1}s_{i_2}\cdots s_{i_m}$, for distinct indices  $i_j$
such that an index $i$ never occurs to the east of an ${i+1}$ (modulo \(k+1\)).

\begin{thm}
\label{t K k Schur basics}
There is a Hopf algebra isomorphism $\Theta \from K_*({\rm Gr_{SL_{k+1}}}) \to \sym_{(k)}$;
the $K$-homology Schubert basis element $\xi^0_{w_\lambda}$ has image denoted
$g_\lambda^{(k)} = \Theta(\xi^0_{w_\lambda})$, for  $\lambda \in \Par^k$.
The $\{g^{(k)}_\lambda \}_{\lambda\in \Par^k}$ form a basis for
 $\sym_{(k)}$ and satisfy the following Pieri rule for all  \(r \in [k]\)\,:
\begin{equation}
\label{gpieri}
g_{1^r} g_\lambda^{(k)} = \sum_{\substack{u \in \eS_{k+1} \text{\,cyclically increasing}
\\ \ell(u) = r  \\ T_u T_{w_\lambda} = \pm T_{w}; \, w \in \eS_{k+1}^0 }}
(-1)^{\ell(w_\lambda)+r-\ell(w)} g_{\w^{-1}(w)}^{(k)}\,.
\end{equation}
Moreover, the $\{g^{(k)}_\lambda \}_{\lambda\in \Par^k}$ are the unique elements of  $\sym_{(k)}$
satisfying \eqref{gpieri} for all  \(r \in [k]\).
\end{thm}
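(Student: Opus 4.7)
The plan is to establish the three claims in sequence, relying on the affine $K$-theoretic Schubert calculus of Lam-Schilling-Shimozono~\cite{lss} for the first two and an induction/triangularity argument for uniqueness.

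For the Hopf algebra isomorphism and basis property, I would quote the main theorem of~\cite{lss}: there is an explicit Hopf algebra isomorphism $K_*(\Gr_{\SL_{k+1}}) \to \Lambda_{(k)}$ under which the Schubert classes $\{\xi^0_w \st w \in \eS_{k+1}^0\}$ form a basis of the image. Naming this map $\Theta$ and setting $g_\lambda^{(k)} := \Theta(\xi^0_{w_\lambda})$ via $\w\colon \Par^k \to \eS_{k+1}^0$ yields the stated basis of $\sym_{(k)}$.

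For the Pieri rule, I would transport the product $g_{1^r} \cdot g_\lambda^{(k)}$ across $\Theta^{-1}$ and invoke the $0$-Hecke description of the $K$-homology Schubert calculus in~\cite{lss}. The key inputs are that $\Theta^{-1}(g_{1^r})$ acts on Schubert classes by the $H_{k+1}$-operator $\sum_u T_u$ summed over cyclically increasing $u$ of length $r$, and that each $T_u$ applied to $\xi^0_{w_\lambda}$ produces $(-1)^{\ell(w_\lambda)+r-\ell(w)}\, \xi^0_w$ whenever $T_u T_{w_\lambda} = \pm T_w$ with $w \in \eS_{k+1}^0$, and is zero otherwise; the sign records how many applications of the relation $T_i^2 = -T_i$ are required to express $T_u T_{w_\lambda}$ in the basis $\{T_w\}_{w \in \eS_{k+1}}$. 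Summing over $u$ and applying $\Theta$ gives exactly \eqref{gpieri}.

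For uniqueness, suppose $\{f_\lambda\}_{\lambda \in \Par^k} \subset \sym_{(k)}$ satisfies \eqref{gpieri} (with $g$ replaced by $f$) and the normalization $f_\emptyset = 1$. I would pass to the associated graded of $\sym_{(k)}$ with respect to the filtration $F_n = \Span\{g_\mu^{(k)} \st |\mu| \leq n\}$. In this associated graded, $g_\mu^{(k)}$ specializes to the $k$-Schur function $s_\mu^{(k)}$ and the right-hand side of \eqref{gpieri} degenerates to the $k$-Schur Pieri rule by retaining only the top-length terms $w$ with $\ell(w) = \ell(w_\mu) + r$. Since the $k$-Schur basis is characterized uniquely by the $k$-Schur Pieri rule, the images of $\{f_\lambda\}$ in the associated graded must be $\{s_\lambda^{(k)}\}$. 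This pins down $f_\lambda$ modulo $F_{|\lambda|-1}$, and an induction on $|\lambda|$ using the full inhomogeneous Pieri rule then forces $f_\lambda = g_\lambda^{(k)}$.

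The main obstacle is the uniqueness step, specifically verifying that \eqref{gpieri} degenerates correctly to the $k$-Schur Pieri rule on the associated graded and that the residual system closes. This requires identifying precisely which cyclically increasing $u$ of length $r$ produce $T_u T_{w_\lambda} = \pm T_w$ with $w \in \eS_{k+1}^0$ and $\ell(w) = \ell(w_\lambda) + r$ (the top-length summands), and matching these pairs with the weak strips appearing in the classical $k$-Schur Pieri rule. This is a reduced-word analysis of affine Grassmannian elements via $(k+1)$-cores and cyclically increasing elements, which I would handle using the techniques of~\cite{lss, morse}.
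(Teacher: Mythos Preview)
Your derivation of the Hopf isomorphism and basis property matches the paper's, and your Pieri-rule argument is close in spirit.  The paper, however, does not extract the vertical rule \eqref{gpieri} directly from \cite{lss}: it first pulls the \emph{horizontal} Pieri rule (multiplication by $h_r = g^{(k)}_{s_{r-1}\cdots s_0}$, cyclically \emph{decreasing} $u$) from \cite{lss}*{(6.1), Corollaries~7.6 and 7.18}, and only then applies the involution $\Omega\colon h_r\mapsto g_{1^r}$, $g_v^{(k)}\mapsto g_{\tau(v)}^{(k)}$ of \cite{morse} to convert it into \eqref{gpieri}.  So your step ``$\Theta^{-1}(g_{1^r})$ acts by $\sum_u T_u$ over cyclically increasing $u$'' hides exactly the ingredient the paper makes explicit.

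The more substantive divergence is in the uniqueness argument.  The paper's proof is a one-liner: iterating the horizontal Pieri rule from $g_\emptyset^{(k)}=1$ expresses every $h_\mu$ ($\mu\in\Par^k$) as a $\Z$-linear combination of the $g_\lambda^{(k)}$'s whose coefficients depend only on the $0$-Hecke combinatorics; since $\{h_\mu\}_{\mu\in\Par^k}$ is a basis of $\sym_{(k)}$, the transition matrix is invertible and the $g_\lambda^{(k)}$'s are pinned down.  Applying $\Omega$ then gives uniqueness for the vertical rule as well.  Your associated-graded scheme is more elaborate and has a real gap: you never establish $f_\lambda\in F_{|\lambda|}$, so ``passing to the associated graded'' is not justified, and the unspecified induction at the end does not close on its own.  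Even granting the degeneration to the $k$-Schur Pieri rule, you would still need the analogue of the paper's transition-matrix step to finish, at which point the associated-graded detour is superfluous.
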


We will show in Theorem~\ref{thm:katalan-kschur-pieri} that the $k$-Schur Katalan functions
$\g_\lambda^{(k)}$ satisfy~\eqref{gpieri}, establishing

\begin{thm}\label{formulations-are-equal}
For any $\lambda\in\Par^k$,
  \(\g_\lambda^{(k)} = g_\lambda^{(k)}\).
Thus, the $k$-Schur Katalan functions are representatives for the Schubert basis of the K-homology of the affine Grassmannian of
$SL_{k+1}$.
\end{thm}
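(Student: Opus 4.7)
The plan is to invoke the uniqueness clause at the end of Theorem~\ref{t K k Schur basics}: since the $K$-$k$-Schur functions form the unique family in $\sym_{(k)}$ satisfying the Pieri rule~\eqref{gpieri} for all $r \in [k]$, it suffices to establish that the $k$-Schur Katalan functions $\g_\lambda^{(k)}$ lie in $\sym_{(k)}$ and obey the same Pieri rule. The authors themselves signal this reduction by pointing to a forthcoming Theorem~\ref{thm:katalan-kschur-pieri}, so I would organize the proof in three stages.

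First I would verify that $\g_\lambda^{(k)} \in \sym_{(k)}$ together with the initial condition $\g_\emptyset^{(k)} = 1$. Unpacking $\g_\lambda^{(k)} = K(\Delta^k(\lambda);\Delta^{k+1}(\lambda);\lambda)$ via Proposition~\ref{def:Kat} gives a $\ZZ$-linear combination of products of $h_m$'s through the $k_m^{(r)}$ expansions. The root ideal $\Delta^k(\lambda)$ is designed, as in the $t=1$ Catalan case treated in \cite{catalans}, to cancel every $h_m$ with $m > k$; a short raising-operator manipulation, treating the lowering operators indexed by $L(\Delta^{k+1}(\lambda))$ as an auxiliary cancellation applied after the Catalan straightening, should show that those extra factors do not spoil this containment.

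The heart of the proof is the Pieri identity $g_{1^r}\,\g_\lambda^{(k)} = \sum (-1)^{\ell(w_\lambda)+r-\ell(w)}\g_{\w^{-1}(w)}^{(k)}$, summed over cyclically increasing $u \in \eS_{k+1}$ of length $r$ with $T_u T_{w_\lambda} = \pm T_w$ and $w \in \eS_{k+1}^0$. Following the approach initiated in \cite{tamvakis,bkt1,bkt2} and refined in \cite{catalans}, I would realize $g_{1^r}\,\g_\lambda^{(k)}$ as a single Katalan expression on $\ell + r$ rows, stacking an $r$-row piece encoding $g_{1^r}$ above the $\ell$-row piece for $\g_\lambda^{(k)}$, and then perform a sequence of straightening moves on the triple $(\Psi, M, \gamma)$. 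These moves should fall into two types: row-collapses that mirror the $0$-Hecke relation $T_i^2 = -T_i$ and generate the global sign $(-1)^{\ell(w_\lambda)+r-\ell(w)}$, and row-commutations that mirror the braid and commutation relations in $H_{k+1}$. The terminal expressions should be in bijection with the Demazure-style actions of cyclically increasing $T_u$ on $T_{w_\lambda}$, matching the index set in~\eqref{gpieri}.

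The main obstacle is this third stage: coherent bookkeeping of how $\Psi$, $M$, and $\gamma$ co-evolve under straightening, so that the resulting sum is exactly the Pieri sum with the correct cancellations and signs; in particular, recognizing which terminal terms survive requires translating the $H_{k+1}$ combinatorics into constraints on the Katalan data, and vice versa. Once this is done, the uniqueness clause of Theorem~\ref{t K k Schur basics} forces $\g_\lambda^{(k)} = g_\lambda^{(k)}$ for every $\lambda \in \Par^k$, and the geometric interpretation as the image of the Schubert basis element $\xi^0_{w_\lambda}$ under $\Theta$ transfers automatically.
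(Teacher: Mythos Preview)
Your strategy is correct and matches the paper's: invoke the uniqueness clause of Theorem~\ref{t K k Schur basics} after verifying the Pieri rule~\eqref{gpieri} for the $\g_\lambda^{(k)}$ via Katalan straightening (carried out in the paper as Proposition~\ref{unstraightened-pieri-rule} through Theorem~\ref{thm:katalan-kschur-pieri}, with containment in $\sym_{(k)}$ handled by Proposition~\ref{prop:band-k-katalans-k-bdd-subspace}). One minor correction: the paper concatenates the $g_{1^r}$ block \emph{below} the zero-padded $\g_\lambda^{(k)}$ block, giving weight $(\lambda,0^{k+1},1^r)$, rather than above.
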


\subsection{Positive branching}

The foremost application of the Katalan function formulation
for $K$-$k$-Schur functions is the ease with which shift invariance~\eqref{shiftinvariance}
follows; we model developments in \cite{catalans} where it was shown that
$k$-Schur functions satisfy a similar shift invariance property,
$e_{\ell}^\perp\, s_{\lambda+1^\ell}^{(k+1)} = s_\lambda^{(k)}$.

Let $\hat{\sym}^{(k)}$ denote the graded completion
of $\sym/\ZZ \{ m_\lambda \mid \lambda \in \Par \setminus \Par^k\}$.
The space  $\sym_{(k)}$ has basis $\{h_\lambda\}_{\lambda\in \Par^k}$;
$\hat{\sym}^{(k)}$ has ``basis''  $\{m_\lambda\}_{\lambda\in \Par^k}$ meaning that
$\hat{\sym}^{(k)} = \prod_{\lambda \in \Par^k} \Z m_\lambda$.
Let
$\langle \cdot, \cdot \rangle \from \sym_{(k)} \times_\ZZ \hat{\sym}^{(k)} \to \ZZ$
be the bilinear form
determined by
$\langle h_\lambda, \sum_{\mu \in \Par^k} a_\mu m_\mu \rangle = a_\lambda$.
The $K$-$k$-Schur functions $\{g_\lambda^{(k)}\}_{\lambda \in \Par^k} \subset \sym_{(k)}$ and
{affine stable Grothendieck polynomials}
$\{G_\mu^{(k)}\}_{\mu \in \Par^k} \subset \hat{\sym}^{(k)}$ satisfy
$\langle  g_\lambda^{(k)}, G^{(k)}_\mu \rangle = \delta_{\lambda \mu}$.
We take this as the definition of the affine stable Grothendieck polynomials.
For $f\in \hat{\sym}^{(k)}$, let $f^\perp$ be the linear operator on $\sym_{(k)}$ given by
$\langle f^\perp(g),h\rangle = \langle g, fh\rangle$ for all $g \in\sym_{(k)}, h \in \hat{\sym}^{(k)}$.

\begin{thm}[Shift Invariance]
\label{shift-invariance}
For  \(\lambda \in \Par^k_\ell\),
\[
    G_{1^\ell}^\perp\, \g_{\lambda+1^\ell}^{(k+1)} = \g_\lambda^{(k)}
\quad\text{where}\quad
G_{1^\ell} = \sum_{i \geq 0} (-1)^i \binom{\ell-1+i}{\ell-1} e_{\ell+i}\,.
\]
Hence by Theorem \ref{formulations-are-equal},
\(G_{1^\ell}^\perp\, g_{\lambda+1^\ell}^{(k+1)} = g_\lambda^{(k)} \) as well.
\end{thm}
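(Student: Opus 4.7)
The plan is to combine two observations: a purely combinatorial shift in the root-ideal data defining $\g^{(k)}$, and a uniform weight-shift identity for $G_{1^\ell}^\perp$ on the generators $k_\gamma$. From the definition $\Delta^k(\mu) = \{(i,j) \in \Delta^+_\ell : k - \mu_i + i < j\}$, substituting $k \mapsto k+1$ and $\mu \mapsto \lambda + 1^\ell$ leaves the defining inequality unchanged, yielding
\[ \Delta^{k+1}(\lambda + 1^\ell) = \Delta^k(\lambda) \quad\text{and}\quad \Delta^{k+2}(\lambda + 1^\ell) = \Delta^{k+1}(\lambda). \]
Hence $\g_{\lambda+1^\ell}^{(k+1)} = K(\Delta^k(\lambda); \Delta^{k+1}(\lambda); \lambda + 1^\ell)$, and it suffices to prove the weight-shift identity $G_{1^\ell}^\perp K(\Psi; M; \gamma + 1^\ell) = K(\Psi; M; \gamma)$ for all root ideals $\Psi$, multisets $M$, and $\gamma \in \Z^\ell$.

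The heart of the argument is the single-function version $G_{1^\ell}^\perp k_{\gamma + 1^\ell} = k_\gamma$. The decisive input is the vanishing $e_d^\perp h_m = 0$ for $d \geq 2$ (no vertical $d$-strip fits in a single-row shape), together with $e_1^\perp h_m = h_{m-1}$; via the expansion $k_m^{(r)} = \sum_i \binom{r+i-1}{i} h_{m-i}$ these promote to $e_d^\perp k_m^{(r)} = 0$ for $d \geq 2$ and $e_1^\perp k_m^{(r)} = k_{m-1}^{(r)}$. By the standard Hopf-algebra coproduct identity
\[ e_d^\perp(f_1 \cdots f_\ell) = \sum_{d_1 + \cdots + d_\ell = d} \prod_{i=1}^\ell e_{d_i}^\perp f_i, \]
applied to $k_{\gamma+1^\ell} = \prod_i k_{\gamma_i+1}^{(i-1)}$, only tuples with every $d_i \in \{0,1\}$ contribute; thus $e_d^\perp k_{\gamma+1^\ell} = 0$ for $d > \ell$, and for $d = \ell$ the unique surviving term is $\prod_i k_{\gamma_i}^{(i-1)} = k_\gamma$. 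Since $G_{1^\ell} = \sum_{j \geq 0}(-1)^j \binom{\ell-1+j}{\ell-1} e_{\ell+j}$, only the $j=0$ contribution remains (with coefficient $\binom{\ell-1}{\ell-1} = 1$), yielding the claim.

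To extend to Katalan functions, Proposition~\ref{def:Kat} lets us write $K(\Psi; M; \gamma + 1^\ell) = \sum_\delta c_\delta\, k_{\gamma + 1^\ell + \delta}$ as a finite integer linear combination, where the coefficients $c_\delta$ come from the formal expansion of the $(1-L_j)$ and $(1-R_{ij})$ factors and depend only on $\Psi$ and $M$, not on $\gamma$. Applying the linear operator $G_{1^\ell}^\perp$ termwise and invoking the single-function identity gives $\sum_\delta c_\delta\, k_{\gamma + \delta} = K(\Psi; M; \gamma)$; specializing to $(\Psi, M, \gamma) = (\Delta^k(\lambda), \Delta^{k+1}(\lambda), \lambda)$ and combining with the root-ideal reduction of the first paragraph completes the proof. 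The main obstacle is the single-function lemma: recognizing the vanishing $e_d^\perp k_m^{(r)} = 0$ for $d \geq 2$ is what makes the coproduct collapse so cleanly that all the higher-order binomial coefficients in $G_{1^\ell}$ become irrelevant and only the leading $e_\ell^\perp$ term contributes, exactly producing the required unit shift on all $\ell$ coordinates.
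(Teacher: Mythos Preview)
Your proof is correct and follows essentially the same strategy as the paper's: establish the root-ideal shift $\Delta^{k+1}(\lambda+1^\ell)=\Delta^k(\lambda)$, show that $e_\ell^\perp$ acts on Katalan functions by the uniform weight shift $\gamma\mapsto\gamma-1^\ell$, and observe that $e_s^\perp$ annihilates Katalan functions in $\ell$ variables for $s>\ell$, so that $G_{1^\ell}^\perp$ collapses to $e_\ell^\perp$. The only difference is in how the key computation on $k_\gamma$ is carried out: the paper uses the operator commutation $e_s^\perp h_m = h_m e_s^\perp + h_{m-1} e_{s-1}^\perp$ and iterates, whereas you evaluate this at $1$ to get the cleaner element identities $e_d^\perp h_m=0$ for $d\ge 2$ and $e_1^\perp h_m=h_{m-1}$, and then apply the coproduct formula $e_d^\perp(\prod f_i)=\sum_{\sum d_i=d}\prod e_{d_i}^\perp f_i$ directly. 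These are equivalent packagings of the same identity; your version is arguably a bit more transparent since the vanishing for $d\ge 2$ makes the collapse to a single term immediate.
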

\begin{proof}
We use that \( e_s^\perp h_m = h_m e_s^\perp + h_{m-1} e_{s-1}^\perp \)
  from \cite{garsia-procesi}*{Equation 5.37} to deduce
  \begin{align*}
    e_s^\perp k_m^{(r)}
     = \sum_{i=0}^m \binom{r+i-1}{i}(h_{m-i} e_s^\perp + h_{m-i-1}
      e_{s-1}^\perp)
     = k_m^{(r)} e_s^\perp + k_{m-1}^{(r)} e_{s-1}^\perp
\,.
  \end{align*}
Using that \(e_i^\perp(1) = 0\) for \(i > 0\),
this applies to the formulation for Katalan functions in Proposition~\ref{def:Kat},
giving that, for \(s \geq 0\), \(\Psi \subset \Delta^+\) a root ideal, \(M\)
a multiset with $\supp(M) \subset \{1,\ldots,\ell\}$, and \(\gamma \in \Z^\ell\),
\[
 e_s^\perp K(\Psi;M;\gamma) = \sum_{S \subset [\ell], \
 |S|=s} K(\Psi;M;\gamma-\epsilon_{S})\,,
\]
where $\epsilon_{S} = \sum_{i \in S} \epsilon_i$.
In particular,  $e_\ell^\perp K(\Psi;M;\gamma+1^\ell) = K(\Psi;M;\gamma)$.
Now for  $\lambda \in \Par^k_\ell$, noting that $\Delta^{m}(\lambda+1^\ell)=\Delta^{m-1}(\lambda)$ for
any \(m \geq k+1\), we obtain
$$
    e_\ell^\perp \g_{\lambda+1^\ell}^{(k+1)} = e_\ell^\perp K(\Delta^{k+1}(\lambda+1^\ell);
    \Delta^{k+2}(\lambda+1^\ell);\lambda+1^\ell)
    = K(\Delta^{k}(\lambda);
      \Delta^{k+1}(\lambda);\lambda) = \g_\lambda^{(k)} \,.
$$
Therefore, $e_\ell^\perp \g_{\lambda+1^\ell}^{(k+1)} = \g_\lambda^{(k)}$.
Since \(e_s^\perp K(\Psi;\lowers;\lambda) = 0\) for \(s > \ell\), we can replace
$e_\ell^\perp$ by $G_{1^\ell}^\perp$.
\end{proof}

Shift invariance implies that $K$-$k$-Schur branching coefficients are a subset of the Pieri coefficients for affine stable Grothendieck polynomials, settling Conjecture~\ref{branchingconj}.

\begin{thm}\label{branching-positivity}
\label{thm:gbranching}
For any $\lambda\in\Par^k$,
\begin{equation}
\label{gkingkp1}
    g_\lambda^{(k)} = \sum_{\mu\in\Par^{k+1}} a_{\lambda \mu}\, g_\mu^{(k+1)}
  \quad \text{ where \( (-1)^{|\lambda|-|\mu|}a_{\lambda \mu} \in \Z_{\geq 0}\).}
\end{equation}
\end{thm}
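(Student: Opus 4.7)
The plan is to use shift invariance to re-express the branching coefficients as Pieri coefficients for affine stable Grothendieck polynomials, then invoke the Baldwin-Kumar positivity theorem for $K$-cohomology of Kac-Moody flag varieties.

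Let $\ell = \ell(\lambda)$. Combining Theorems \ref{formulations-are-equal} and \ref{shift-invariance}, we have $g_\lambda^{(k)} = G_{1^\ell}^\perp \, g_{\lambda + 1^\ell}^{(k+1)}$, an identity in $\Lambda_{(k)} \subset \Lambda_{(k+1)}$. Applying the defining adjointness $\langle f^\perp g, h\rangle = \langle g, fh\rangle$ of the perp operator,
$$a_{\lambda\mu} \;=\; \langle g_\lambda^{(k)}, G_\mu^{(k+1)}\rangle \;=\; \langle g_{\lambda+1^\ell}^{(k+1)},\; G_{1^\ell}\, G_\mu^{(k+1)}\rangle,$$
so $a_{\lambda\mu}$ is the coefficient of $G_{\lambda+1^\ell}^{(k+1)}$ in the expansion of $G_{1^\ell}\, G_\mu^{(k+1)}$ in the affine stable Grothendieck basis $\{G_\nu^{(k+1)}\}_{\nu \in \Par^{k+1}}$. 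The branching problem is thereby reduced to a Pieri-type product question in $\hat{\sym}^{(k+1)}$.

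Next, I would identify $G_{1^\ell}$ as an affine Schubert representative: the column-shaped classical stable Grothendieck polynomial $G_{1^\ell}$ coincides with the affine stable Grothendieck polynomial $G^{(k+1)}_{1^\ell}$, an instance of the compatibility between classical and affine stable Grothendieck polynomials for partitions labeling ``small'' minimally-sized affine elements, implicit in the framework of \cite{lss, morse}. Under the Hopf algebra dual of the isomorphism $K_*(\Gr_{SL_{k+2}}) \cong \sym_{(k+1)}$ from Theorem \ref{t K k Schur basics}, the product $G_{1^\ell}\, G_\mu^{(k+1)}$ then represents the product of two Schubert classes in $K^*(\Gr_{SL_{k+2}})$. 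The Baldwin-Kumar theorem \cite{baldwin-kumar} gives that such structure constants $c^{w}_{u,v}$ satisfy $(-1)^{\ell(w)-\ell(u)-\ell(v)} c^{w}_{u,v} \in \Z_{\geq 0}$. Since $\ell(w_\nu) = |\nu|$ for $\nu \in \Par^{k+1}$, this yields
$$(-1)^{|\lambda+1^\ell| - |1^\ell| - |\mu|}\, a_{\lambda\mu} \;=\; (-1)^{|\lambda|-|\mu|}\, a_{\lambda\mu} \;\ge\; 0,$$
exactly the required sign.

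The main obstacle will be the clean identification of $G_{1^\ell}$ with an affine Schubert representative so that Baldwin-Kumar positivity directly applies to the product in question. While this identification is intuitively clear because the column $1^\ell$ does not interact with the affine truncation at level $k+1$, one must trace carefully through the $K$-homology/$K$-cohomology dictionary (and the Hopf structure) to extract it rigorously. Everything else --- shift invariance, adjointness, and the degree-length matching $\ell(w_\nu) = |\nu|$ --- is routine once Theorem \ref{shift-invariance} is in hand.
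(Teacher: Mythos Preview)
Your proposal is correct and follows essentially the same route as the paper: shift invariance plus adjointness reduces the branching coefficients to Pieri-type structure constants for the $G_\nu^{(k+1)}$, and Baldwin--Kumar positivity then gives the sign. The paper likewise isolates the identification $G_{1^\ell} = G_{1^\ell}^{(k+1)}$ as the one point requiring separate verification, which it carries out in the Appendix via a direct 0-Hecke factorization count.
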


\begin{proof}
  Fix \(\ell = \ell(\lambda)\). For $\mu\in\Par^{k+1}$, Baldwin and
  Kumar~\cite{baldwin-kumar} proved that
\begin{align}
\label{e branching}
 G_{1^\ell}^{(k+1)} G_\mu^{(k+1)} = \sum_\gamma c_{\gamma \mu} \,G_\gamma^{(k+1)}\,
\text{ satisfy } (-1)^{|\gamma|-\ell-|\mu|} c_{\gamma \mu} \in \Z_{\geq 0}\,.
\end{align}
Since $\langle g^{(k+1)}_\alpha, G^{(k+1)}_\beta \rangle = \delta_{\alpha \beta}$ for
 $\alpha, \beta \in \Par^{k+1}$, from \eqref{e branching} we obtain
 \begin{align*}
c_{\gamma \mu}
= \big\langle g_\gamma^{(k+1)}, \sum_\beta c_{\beta \mu} \,G_\beta^{(k+1)}  \big\rangle
=  \big\langle g_\gamma^{(k+1)}, G_{1^\ell}^{(k+1)} G_\mu^{(k+1)}  \big\rangle = \big\langle (G_{1^\ell}^{(k+1)})^\perp
   g_\gamma^{(k+1)}, G_\mu^{(k+1)} \big\rangle\,.
\end{align*}
Therefore, for $\gamma= \lambda+1^\ell$,
\[
\sum_\mu c_{\gamma \mu}
  g_\mu^{(k+1)} =  (G_{1^\ell}^{(k+1)})^\perp g_\gamma^{(k+1)} = g_\lambda^{(k)},
\]
where we can apply Theorem~\ref{shift-invariance} (shift-invariance) to the second equality
because \( G_{1^\ell}^{(k+1)} =  G_{1^\ell}\), verified in Section~\ref{sec:GisGk} of the Appendix.
We thus have that \(a_{\lambda \mu} = c_{\lambda+1^\ell,\mu}\), and the result follows
from~\eqref{e branching}.
\end{proof}

Other properties of  $K$-$k$-Schur functions are readily apparent from the Katalan/raising operator description.
For example, the following property was conjectured in~\cite{morse}; while seemingly simple, it was not apparent from previous descriptions and is
the missing ingredient for resolving conjectures in~\cite{lss,morse,IIM}.

\begin{cor}
\label{k-rectangle-bounded-gk}
\label{con:hookg}
For \(\mu \in \Par^k_\ell\) with \(\mu_1 + \ell - 1 \leq k\),
$g_\mu^{(k)}=g_\mu$\,.
\end{cor}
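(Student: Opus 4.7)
The plan is to simply unpack the Katalan definition of $g_\mu^{(k)}$ and observe that the hook/staircase hypothesis $\mu_1 + \ell - 1 \leq k$ forces all the raising and lowering operators in the Katalan formula to disappear, leaving exactly the determinantal expression for $g_\mu$.

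First I would invoke Theorem~\ref{formulations-are-equal} to rewrite $g_\mu^{(k)} = \g_\mu^{(k)} = K(\Delta^k(\mu);\,\Delta^{k+1}(\mu);\,\mu)$ via Definition~\ref{def:KatKks}. Then I would check that under the hypothesis both root ideals appearing here are empty. By definition \eqref{e Deltak def}, a pair $(i,j) \in \Delta^+_\ell$ lies in $\Delta^k(\mu)$ iff $j > k - \mu_i + i$. Using $j \le \ell$, $\mu_i \le \mu_1 \le k - \ell + 1$, and $i \ge 1$, we get
\[
k - \mu_i + i \;\ge\; k - (k-\ell+1) + 1 \;=\; \ell \;\ge\; j,
\]
so no such $(i,j)$ exists; hence $\Delta^k(\mu) = \emptyset$. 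The identical computation with $k$ replaced by $k+1$ gives $\Delta^{k+1}(\mu) = \emptyset$ (with an extra $+1$ of slack). Since $\Delta^{k+1}(\mu) = \emptyset$, the associated multiset $L(\Delta^{k+1}(\mu))$ is empty as well.

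Having eliminated both decorations, I would finish by applying Proposition~\ref{prop:extremal}(b), which gives
\[
K(\emptyset;\,\emptyset;\,\mu) \;=\; g_\mu,
\]
and therefore $g_\mu^{(k)} = g_\mu$, as desired.

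There is really no conceptual obstacle here: once the Katalan formula for $g_\mu^{(k)}$ from Definition~\ref{def:KatKks} is in hand (via Theorem~\ref{formulations-are-equal}), the content of the corollary is the elementary combinatorial fact that the hook condition $\mu_1 + \ell - 1 \le k$ places $\mu$ deep enough inside the $\ell \times k$ rectangle that the staircase thresholds defining $\Delta^k(\mu)$ and $\Delta^{k+1}(\mu)$ exceed $\ell$. This is precisely why the corollary was previously stubborn from the Pieri/geometric descriptions but becomes transparent from the raising-operator viewpoint, and it is exactly the fact advertised in the introduction as the missing ingredient needed to verify Conjecture~\ref{eq:IIMconj} in the Grassmannian case.
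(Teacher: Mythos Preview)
Your proof is correct and follows essentially the same route as the paper: both arguments reduce to the observation that the hypothesis $\mu_1 + \ell - 1 \le k$ forces $\Delta^{k}(\mu) = \Delta^{k+1}(\mu) = \emptyset$, after which Definition~\ref{def:KatKks}, Theorem~\ref{formulations-are-equal}, and Proposition~\ref{prop:extremal}(b) give $g_\mu^{(k)} = K(\emptyset;\emptyset;\mu) = g_\mu$. You have simply spelled out the arithmetic verifying the emptiness of the root ideals more explicitly than the paper does.
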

\begin{proof}
Since \(\Delta^{k}(\mu) = \emptyset = \Delta^{k+1}(\mu)\) when \( k-\mu_1 + 1 \geq \ell \),
the result follows from Definition~\ref{def:KatKks}.
\end{proof}

By iterating branching to obtain an expansion for $g_\lambda^{(k)}$ in terms of $g_\mu^{(a)}$
for large enough $a$ so that Corollary~\ref{k-rectangle-bounded-gk} applies to every term,
we establish~\cite{morse}*{Conjecture 46} as well.

\begin{cor}
\label{cor gk to g}
  For \(\lambda \in \Par^k\),
\[
    g^{(k)}_\lambda = \sum_\mu b_{\lambda \mu}\, g_\mu
\quad
\text{ where \(\;(-1)^{|\lambda|-|\mu|} b_{\lambda \mu} \in \Z_{\geq 0}\).}
\]
\end{cor}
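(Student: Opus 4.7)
The strategy the text suggests is to iterate Theorem~\ref{thm:gbranching} until one arrives at an expansion in $K$-$k'$-Schur functions for a $k'$ so large that every indexing partition $\mu$ satisfies the hook condition $\mu_1 + \ell(\mu) - 1 \le k'$, at which point Corollary~\ref{k-rectangle-bounded-gk} converts each $g_\mu^{(k')}$ to the genuine dual stable Grothendieck function $g_\mu$. I would carry this out in the following order.

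First, I would apply Theorem~\ref{thm:gbranching} once to write $g_\lambda^{(k)} = \sum_\mu a_{\lambda\mu} g_\mu^{(k+1)}$ with $(-1)^{|\lambda|-|\mu|} a_{\lambda\mu} \in \Z_{\ge 0}$, then apply it again to each $g_\mu^{(k+1)}$, and so on, producing after $N$ iterations an expansion
\[
g_\lambda^{(k)} = \sum_{\nu} b_{\lambda\nu}^{(N)} \, g_\nu^{(k+N)}\,,
\]
where $b_{\lambda\nu}^{(N)}$ is a sum over chains $\lambda = \mu^{(0)},\mu^{(1)},\ldots,\mu^{(N)} = \nu$ of products $a_{\mu^{(0)}\mu^{(1)}} a_{\mu^{(1)}\mu^{(2)}} \cdots a_{\mu^{(N-1)}\mu^{(N)}}$. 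Each factor carries sign $(-1)^{|\mu^{(i)}| - |\mu^{(i+1)}|}$, so the product has sign $(-1)^{|\lambda|-|\nu|}$; hence $(-1)^{|\lambda|-|\nu|} b_{\lambda\nu}^{(N)} \in \Z_{\ge 0}$ for every $N$.

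Next I would argue that the iteration stabilizes, which is the crux of the argument. The key observation is a size bound: whenever $a_{\alpha\beta} \ne 0$ in a single branching step, one has $|\beta| \le |\alpha|$. This follows from the fact that the top homogeneous component of $g_\alpha^{(k+1)}$ in its branching expansion reproduces the $k$-Schur branching $s_\alpha^{(k)} = \sum_{|\beta|=|\alpha|} a'_{\alpha\beta} s_\beta^{(k+1)}$, while the remaining $g_\beta^{(k+1)}$-terms are corrections of strictly lower degree. (Alternatively, one can extract this from the proof of Theorem~\ref{thm:gbranching} itself, since the identification $a_{\lambda\mu} = c_{\lambda+1^\ell,\mu}$ of the coefficients with $K$-theoretic Pieri coefficients of Baldwin--Kumar carries the standard $K$-theory filtration bound $|\mu| \le |\lambda+1^\ell| - \ell = |\lambda|$.) Consequently, every $\nu$ that appears in $b_{\lambda\nu}^{(N)}$ satisfies $|\nu| \le |\lambda|$, and thus $\nu_1 + \ell(\nu) - 1 \le 2|\lambda|-1$.

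Finally, I would pick $N$ large enough that $k + N \ge 2|\lambda| - 1$. Then for every $\nu$ with $b_{\lambda\nu}^{(N)} \ne 0$ one has $\nu_1 + \ell(\nu) - 1 \le k+N$, so Corollary~\ref{k-rectangle-bounded-gk} gives $g_\nu^{(k+N)} = g_\nu$. Setting $b_{\lambda\nu} = b_{\lambda\nu}^{(N)}$ yields the desired expansion $g_\lambda^{(k)} = \sum_\nu b_{\lambda\nu}\, g_\nu$ with $(-1)^{|\lambda|-|\nu|} b_{\lambda\nu} \in \Z_{\ge 0}$. The main obstacle, as indicated, is not the sign bookkeeping (which is automatic) but verifying the size bound $|\beta| \le |\alpha|$ at each branching step, so that only finitely many partitions appear and a uniform choice of $N$ exists; once that is in hand the corollary falls out immediately.
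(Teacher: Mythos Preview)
Your proposal is correct and follows exactly the approach the paper indicates: iterate Theorem~\ref{thm:gbranching}, use that the signs telescope, and stop once $k+N$ is large enough to invoke Corollary~\ref{k-rectangle-bounded-gk} on every surviving term. The size bound $|\beta|\le|\alpha|$ at each step is indeed the only point requiring justification, and your degree argument (top homogeneous component of $g_\alpha^{(k)}$ is $s_\alpha^{(k)}$) handles it; note you could sharpen $\nu_1+\ell(\nu)-1\le 2|\lambda|-1$ to $\nu_1+\ell(\nu)-1\le |\nu|\le|\lambda|$ since the left side is the hook length of the cell $(1,1)$.
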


\subsection{Katalan functions for quantum Grothendieck polynomials}
\label{ss Katalan for quantum}
We give some background to explain Conjecture \ref{eq:IIMconj}
and then give a conjectural Katalan description of  $\Phi(\mathfrak G_w^Q)$.

The quantum  $K$-theory ring  $\mathcal{QK}(\Fl_{k+1})$ can be identified with a quotient of
 $\C[z_1,\dots, z_{k+1}, Q_1,\dots, Q_{k}]$
 by \cite{KM, act}; see, e.g., \cite[\S1.1--1.2]{IIM} which includes
 an explicit description of the defining ideal.
 A \emph{$k$-rectangle} is a partition of the form $R_i:=(k+1-i)^i$ for  $i \in [k]$.
Define $\sigma_i = \sum_{\mu \subset R_i} g_\mu$ for  $i \in [k]$, and
set  $\sigma_0 = \sigma_{k+1} = g_{R_0} = g_{R_{k+1}} = 1$.
Ikeda, Iwao, and Maeno give the following description of a $K$-theoretic version of the
Peterson isomorphism \cite[Theorem 1.5]{IIM}\,:
\begin{align*}
\Phi \colon \mathcal{QK}(\Fl_{k+1})[Q_1^{-1},\dots, Q_k^{-1}] & \xrightarrow{\cong} \C\otimes_\ZZ\sym_{(k)}[g_{R_1}^{-1}, \dots, g_{R_k}^{-1}, \sigma_1^{-1}, \dots, \sigma_k^{-1} ]  \\[1mm]  
z_i \mapsto \frac{g_{R_i} \sigma_{i-1}}{g_{R_{i-1}} \sigma_i } &, \ \  Q_i \mapsto \frac{g_{R_{i-1}}g_{R_{i+ 1} }}{g_{R_{i}}^2}.
\end{align*}
Lenart and Maeno defined \cite[Definition 3.18]{LenartMaeno}
the \emph{quantum Grothendieck polynomials}
\break $\{\mathfrak G_w^q(x_1,\dots, x_{k+1}, q_1,\dots, q_{k})\}_{w \in S_{k+1}}$
as the image of the ordinary Grothendieck polynomials $\{\mathfrak G_w\}_{w \in S_{k+1}}$
under a quantization map. The $\mathfrak G_w^q$'s specialize to the  $\mathfrak G_w$ at  $q_1= \cdots q_k = 0$.  Following \cite[\S5.4]{IIM}, we work with $\{\mathfrak G_w^Q(z_1,\dots, z_{k+1}, Q_1,\dots, Q_{k})\}_{w \in S_{k+1}} \subset \mathcal{QK}(\Fl_{k+1})$
which differs from the $\mathfrak G_w^q$'s by the change of variables
$z_i = 1-x_i$ for all  $i \in [k+1]$ and \(Q_i=q_i\) for \(i \in [k]\).

The images $\Phi(\mathfrak G_w^Q)$ are described in terms
of a map $\theta \colon S_{k+1} \to \Par^k$.
For  $w = w_1 \cdots w_{k+1} \in S_{k+1}$ in one-line notation,
the \emph{descent set} of  $w$ is  $\text{\rm Des}(w)=\{i :w_i>w_{i+1}\}$, and
its \emph{inversion sequence} $\Inv(w) \in \ZZ_{\ge 0}^{k}$ is given by
$\Inv_i(w) = \big|\{ j>i : w_i > w_j \}\big|.$
Define an injection $\zeta:S_{k+1}\to \Par^k$ by letting column $i$ of $\zeta(w)$ be
\begin{equation}
\label{perm2part}
 \binom{k+1-i}{2}+\Inv_{i}(w_0 w)\,,
\end{equation}
for all $i\in [k]$, where  $w_0$ denotes the longest element of  $S_{k+1}$.
An element of $\Par^k$ is {\it irreducible} if it
has at most $k-i$ parts of size~$i$, or
 equivalently, it contains no $k$-rectangle as a subsequence.
For any  $\mu \in \Par^k$, define the unique irreducible partition $\mu_{\downarrow}$
by deleting from $\mu$ the $k$-rectangles it contains as a subsequence.
Set $\theta(w) = \zeta(w)_\downarrow$.
By \cite[Lemma~7.3]{ksplitcatalans},  the map $\theta$ is the same as
the map  $\lambda$ from \cite[\S6]{LStoda}, \cite[\S7.1]{IIM}.

The  $k$-conjugate involution on  $\Par^k$ introduced in \cite{LMtaboncores}
can be described as follows: for $\mu \in \Par^k$, its  $k$-conjugate is
$\mu^{\omega_k} = \w^{-1} \circ \tau \circ \w(\mu)$,
for $\tau \colon \eS_{k+1} \to \eS_{k+1}$ the automorphism  given by $s_i \mapsto s_{k+1-i}$.
Note that for  $\mu$ contained in a  $k$-rectangle,  $\mu^{\omega_k}$ is equal to the (ordinary) conjugate
partition  $\mu'$ of  $\mu$.

Ikeda conjectured that the image $\Phi(\mathfrak G_w^Q)$
is in fact not best described with $K$-$k$-Schur functions,
but instead proposed~\cite{ikedaprivate} the functions
$\tilde g_w = (1-G_1^\perp) \Big(\sum_{\mu \in \Par^k,\,
w_\mu\leq w_\lambda } g_\mu^{(k)}\Big)$
from \eqref{gwiggle}.
We conjecture the following explicit raising operator formula for Ikeda's functions.

\begin{defn}\label{def:thickKatKks}
For \(\lambda \in \Par_\ell^k\), the \de{closed $k$-Schur Katalan function} is
\[ \tg_\lambda^{(k)} = K(\Delta^{k}(\lambda); \Delta^{k}(\lambda); \lambda) \,.
  \]
\end{defn}

\begin{conjecture}
\label{c tg}
Let $w\in S_{k+1}$ and $\mu\in\Par^k_\ell$ be arbitrary and set $\lambda=\theta(w)^{\omega_k}$.  Then
  \begin{enumerate}
  \item \(\tg_\lambda^{(k)} = \tilde{g}_w\),
  \item \[\Phi(\mathfrak G_w^Q)=
\frac{\tg_{\lambda}^{(k)}}
{\prod_{d\in \text{\rm Des}(w)} g_{R_d}} \,,\]
\item (alternating dual Pieri rule) the coefficients in $G_{1^m}^\perp \tg_{\mu}^{(k)} = \sum_{\nu} c_{\mu\nu} \tg_\nu^{(k)}$
    satisfy \((-1)^{|\mu|-|\nu|} c_{\mu\nu} \in \Z_{\geq 0}\),
 \item (\(k\)-branching)
 \label{tg-k-branching}
 the coefficients in \(\tg_\mu^{(k)} =
     \sum_\nu a_{\mu\nu} \tg_\nu^{(k+1)}\) satisfy
    \((-1)^{|\mu|-|\nu|} a_{\mu\nu} \in \Z_{\geq 0}\),
  \item (\(K\)-\(k\)-Schur alternating)
  \label{tg-K-k-schur-alt}
the coefficients in  $\tg_\mu^{(k)} = \sum_{\nu} b_{\mu\nu}\, g_\nu^{(k)}$ satisfy
    \((-1)^{|\mu|-|\nu|} b_{\mu\nu} \in \Z_{\geq 0}\),
\item (\(k\)-rectangle property) for \(d \in[k]\),
      \  $g_{R_d} \, \tg_{\mu}^{(k)} = \tg_{\mu \union R_d}^{(k)},$
      where $\mu\union R_d$ is the partition made by
      combining the parts of $\mu$ and those of  $R_d$ and then sorting.
 \end{enumerate}
\end{conjecture}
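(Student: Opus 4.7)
The plan is to tackle the six parts roughly in order of difficulty, leveraging the raising-operator machinery of the paper throughout. The foundational observation is that $\tg^{(k)}_\lambda$ differs from $g^{(k)}_\lambda$ only in its multiset component, enlarged from $\Delta^{k+1}(\lambda)$ to $\Delta^{k}(\lambda)$; equivalently, by Definition \ref{def:Kat-g}, we have $\tg^{(k)}_\lambda = \prod_{j \in N}(1-L_j)\, g^{(k)}_\lambda$, where $N = L(\Delta^{k}(\lambda)) - L(\Delta^{k+1}(\lambda))$ is an explicit multiset of lowering-operator indices acting on the $\lambda$-subscript. This identity is the bridge between the $K$-$k$-Schur functions (already well-understood in the paper) and the closed variants.

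I would first establish the $k$-rectangle property (f) by direct manipulation of the Katalan formula, in the spirit of the rectangle factorization for $k$-Schur Catalan functions in \cite[\S 5]{catalans}. The key checks are that $\Delta^{k}(R_d)=\emptyset$ (so $\tg^{(k)}_{R_d}=g_{R_d}$) and that concatenating $R_d$ onto $\mu$ causes $\Delta^{k}(\mu \cup R_d)$ and its multiset to split into non-interacting blocks, allowing the raising/lowering-operator product to factor. Next, I would prove a shift-invariance identity $G_{1^\ell}^\perp \tg^{(k+1)}_{\lambda+1^\ell} = \tg^{(k)}_\lambda$ by the same computation as in Theorem \ref{shift-invariance}, using that $\Delta^{k+1}(\lambda+1^\ell) = \Delta^{k}(\lambda)$ makes both the root-ideal and multiset data transform compatibly. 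Combined with Baldwin-Kumar positivity \cite{baldwin-kumar}, shift invariance should yield the $k$-branching positivity (d) via the Theorem \ref{branching-positivity} argument, once $\{\tg^{(k)}_\mu\}$ is shown to be a basis of $\sym_{(k)}$ -- which follows from upper-triangularity of the expansion $\tg^{(k)}_\mu = g^{(k)}_\mu + \text{lower-degree terms}$ implied by the lowering-operator formula above.

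For the $K$-$k$-Schur alternating expansion (e), I would unpack the lowering-operator formula as $\tg^{(k)}_\mu = \sum_{S \subset N}(-1)^{|S|} g^{(k)}_{\mu-\epsilon_S}$, where each $g^{(k)}_\gamma$ is interpreted via the Katalan formula for general $\gamma \in \Z^\ell$; the technical challenge is straightening these non-partition-indexed terms back into honest $\{g^{(k)}_\nu\}_{\nu \in \Par^k}$ with controlled signs, which would require developing a straightening rule for $K$-$k$-Schur Katalan functions analogous to the classical one for Schur functions. The alternating dual Pieri rule (c) should then follow by combining this formula with the dual Pieri action of $G_{1^m}^\perp$ on $g^{(k)}$ (dualizing Theorem~\ref{t K k Schur basics}) and carefully tracking signs via the affine Bruhat combinatorics.

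The main obstacle -- the hardest step -- is (a): matching the raising-operator formula $\tg^{(k)}_\lambda = K(\Delta^{k}(\lambda);\Delta^{k}(\lambda);\lambda)$ with Ikeda's Bruhat-order sum $(1-G_1^\perp)\sum_{\mu:\, w_\mu \le w_\lambda} g^{(k)}_\mu$. My plan is to characterize both sides by a uniquely determining property -- for instance, combining (c) with upper-triangularity in the $g^{(k)}$ basis from (e). For Ikeda's side, one uses the geometric interpretation of $\sum_{\mu:\, w_\mu \le w_\lambda} g^{(k)}_\mu$ as the Schubert structure-sheaf class \cite{llms-conj-peterson-isom,lss} and $(1-G_1^\perp)$ to pass to an ideal-sheaf-type class, then verifies that this satisfies the same characterization. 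The principal combinatorial difficulty is establishing a tight dictionary between the extra lowering-operator multiset $N$ and the boundary structure of the affine Bruhat interval $[\mathrm{id},w_\lambda]$ stripped off by $(1-G_1^\perp)$ in $\eS^0_{k+1}$. Once (a) is resolved, (b) follows immediately by combining it with \cite[Theorem 1.5]{IIM} via the Ikeda-Iwao-Maeno strategy, for which (a) was the missing input.
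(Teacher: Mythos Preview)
The statement you are attempting to prove is labeled a \emph{Conjecture} in the paper, and the paper does not prove it.  The paper only establishes partial results: Proposition~\ref{p thick results} (basis, unitriangularity, shift invariance, the $k$-rectangle-bounded case), Proposition~\ref{k-rectangle-bounded-qkt} (the Grassmannian case of (b)), and Proposition~\ref{prop:tg-longest-word} (the $w_0$ case).  So there is no paper proof to compare your proposal against; you are sketching an attack on an open problem.

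Several steps in your plan have genuine gaps.  For (d), the Baldwin--Kumar argument of Theorem~\ref{branching-positivity} does not transfer: that argument works because the dual basis to $\{g^{(k+1)}_\mu\}$ is $\{G^{(k+1)}_\mu\}$, and Baldwin--Kumar controls the signs of the structure constants for multiplication by $G^{(k+1)}_{1^\ell}$ in \emph{that} basis.  The dual basis to $\{\tg^{(k+1)}_\mu\}$ is something else entirely, and no positivity is known for it.  The paper itself observes only that (c) implies (d) via shift invariance---not that Baldwin--Kumar implies (d) directly---and (c) is open.  For (f), your claim that $\Delta^{k}(\mu\cup R_d)$ ``splits into non-interacting blocks'' is not correct as stated: $\mu\cup R_d$ is the \emph{sorted} union, so the parts of $R_d$ are interleaved with those of $\mu$, and the root ideal does not decompose as a $\uplus$-concatenation to which Lemma~\ref{katalan-concatenation} applies.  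The analogous rectangle factorization for $k$-Schur Catalan functions in \cite{catalans} requires substantial work beyond block decomposition, and the Remark following Conjecture~\ref{c tg} notes that Takigiku proved (f) for the $\tilde g_w$ side, so (f) for $\tg^{(k)}$ would follow from (a) rather than the other way around.  Your plans for (a), (c), and (e) are honest outlines of what would need to be done, but the ``straightening rule for $K$-$k$-Schur Katalan functions with controlled signs'' and the ``dictionary between the extra lowering-operator multiset and the boundary of the Bruhat interval'' are precisely the missing ideas, not steps that follow from the paper's machinery.
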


\begin{rmk}
Conjectures (b) and (e) are just slight variants of previous conjectures in
that, assuming (a), (b) is equivalent to Conjecture \ref{eq:IIMconj} and
(e) is equivalent to the \(K\)-\(k\)-Schur alternating for  $\tilde{g}_w$'s conjectured in \cite{IIM}.  Similarly, Takigiku~\cite{takigiku,takigiku-later} proved a \(k\)-rectangle property for a related family which is equivalent to (f) assuming (a).

Note that   (c) implies (d) by shift invariance (Proposition~\ref{p thick results} (c) below).
\end{rmk}

\begin{example}
Let us directly verify Conjecture~\ref{c tg} (b) for $k=2$ and  $w=213$ (one-line notation),
using the definition of $\Phi$.
The quantum  Grothendieck is $\G_{w}^Q=1-z_1+z_1Q_1$.
Thus using $g_{R_1}=h_2$, $g_{R_2} = h_1^2-h_2+h_1$, $\sigma_1=h_2+h_1+1$, and $\sigma_2=h_1^2-h_2+2h_1+1$,
$$
\Phi(\G_{w}^Q) =
1- \frac{g_{R_1}}{\sigma_1} + \frac{g_{R_1}}{\sigma_1} \frac{g_{R_2}}{g_{R_1}^2}
= \frac{(h_2+h_1+1) h_2-h_2^2+h_1^2-h_2+h_1}{h_2(h_2+h_1+1)}
= \frac{h_1}{h_2}
=\frac{\tg_{(1)}^{(2)}}{g_{R_1}}.
$$
This is the desired conclusion as
$\zeta(w)'=(2,1)$, $\theta(w)=(1)=\theta(w)^{\omega_2}$, and $\text{\rm Des}(w)=\{1\}$.

For  $k=4$ and $v=13254\in S_5$, we use $\Inv(w_0v)=(4,2,2,0)$ to find $\zeta(v)'=(10,5,3,0)$ and $\theta(v)=(3,2,2,1)$.
We have $\theta(v)^{\omega_4}=(3,2,1,1,1)$ and $\text{\rm Des}(v)=\{2,4\}$, so Conjecture~\ref{c tg} (b) states that
$$
\Phi(\G_v^Q) = \frac{\tg_{(3,2,1,1,1)}^{(4)}}{g_{R_2} \, g_{R_4}}\,, %
$$
as can be confirmed in Sage.
\end{example}

\begin{prop}
\label{p thick results}
The closed $k$-Schur Katalan functions $\{\tg_\lambda^{(k)}\}_{\lambda\in\Par^k}$
\begin{enumerate}
\item form a basis for $\Lambda_{(k)}$;
\item are unitriangularly related to $K$-$k$-Schur functions
\begin{equation}
\label{etguni}
\tg_\lambda^{(k)} = g_\lambda^{(k)} + \sum_{\nu:|\nu|<|\lambda|} b_{\lambda\nu}\, g_\nu^{(k)}\,;
\end{equation}
\item satisfy shift invariance
$$ G_{1^\ell}^\perp \tg_{\lambda+1^\ell}^{(k+1)}=\tg_{\lambda}^{(k)}\,;$$
\item simplify as $\tg_\lambda^{(k)}=g_\lambda$ for  $\lambda$ contained in a  $k$-rectangle, i.e., \(\lambda \in \Par^k_\ell\) with \(\lambda_1 + \ell - 1 \leq k\).
    \end{enumerate}
\end{prop}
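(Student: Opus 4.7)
Parts (d) and (c) should go through directly. For (d), the hypothesis $\lambda_1+\ell-1 \le k$ forces $k-\lambda_i+i \ge k-\lambda_1+1 \ge \ell$ for every $i \in [\ell]$, so no pair $(i,j)$ with $i<j\le\ell$ can satisfy $j > k-\lambda_i+i$; hence $\Delta^k(\lambda)=\emptyset$ and $\tg_\lambda^{(k)} = K(\emptyset;\emptyset;\lambda) = g_\lambda$ by Proposition~\ref{prop:extremal}(b). For (c), I would mimic the proof of Theorem~\ref{shift-invariance}: the identity $\Delta^{k+1}(\lambda+1^\ell) = \Delta^k(\lambda)$ now occurs in both the root-ideal and multiset slots of $\tg_{\lambda+1^\ell}^{(k+1)}$, so applying the formula $e_s^\perp K(\Psi;M;\gamma) = \sum_{|S|=s} K(\Psi;M;\gamma - \epsilon_S)$ with $s=\ell$ produces only the $S=[\ell]$ term and gives $e_\ell^\perp \tg_{\lambda+1^\ell}^{(k+1)} = K(\Delta^k(\lambda);\Delta^k(\lambda);\lambda) = \tg_\lambda^{(k)}$; one then replaces $e_\ell^\perp$ by $G_{1^\ell}^\perp$ because $e_s^\perp$ annihilates weights of length $\ell$ for $s>\ell$, exactly as in that earlier proof.

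For (b), the key preliminary observation is a top-degree calculation: the degree-$|\gamma|$ part of any Katalan function $K(\Psi;M;\gamma)$ equals the Catalan function $H(\Psi;\gamma)$, since each $(1-L_j)$ acts as the identity at top degree and each $k_m^{(r)}$ has top term $h_m$. Thus $\tg_\lambda^{(k)}$ and $g_\lambda^{(k)}$ share top part $H(\Delta^k(\lambda);\lambda) = s_\lambda^{(k)}$, and $\tg_\lambda^{(k)}-g_\lambda^{(k)}$ has degree strictly less than $|\lambda|$. Once one has established $\tg_\lambda^{(k)} \in \Lambda_{(k)}$, expanding in the basis $\{g_\nu^{(k)}\}_{\nu\in\Par^k}$ and matching top-degree parts (using that $\{s_\nu^{(k)}\}_{|\nu|=d}$ spans the degree-$d$ piece of $\Lambda_{(k)}$) forces exactly the unitriangular form displayed in \eqref{etguni}; statement (a) then follows immediately, since a unitriangular change of coordinates from the basis $\{g_\nu^{(k)}\}$ is itself a basis of $\Lambda_{(k)}$.

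The main obstacle is therefore to prove $\tg_\lambda^{(k)} \in \Lambda_{(k)}$. My plan is to commute the lowering operators past the rest of the Katalan operator and write
\[
\tg_\lambda^{(k)} \;=\; \sum_T (-1)^{|T|}\, K\bigl(\Delta^k(\lambda);\,\Delta^{k+1}(\lambda);\, \lambda - \epsilon_T\bigr),
\]
summing over multi-subsets $T$ of $L(\Delta^k(\lambda)\setminus \Delta^{k+1}(\lambda))$; every summand then shares the root-ideal and multiset data of the Katalan formulation of $g_\lambda^{(k)}$. The task reduces to straightening each $K(\Delta^k(\lambda);\Delta^{k+1}(\lambda);\lambda-\epsilon_T)$ to a signed $\Z$-combination of $K$-$k$-Schur functions $g_\mu^{(k)}$, which lie in $\Lambda_{(k)}$ by Theorem~\ref{formulations-are-equal}. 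This straightening is the delicate step; I expect it to use the same cyclic-increasing combinatorics and Katalan straightening identities required to prove the Pieri rule for $\g_\lambda^{(k)}$. As a complementary tactic, part (c) gives $\tg_\lambda^{(k)} = G_{1^\ell}^\perp \tg_{\lambda+1^\ell}^{(k+1)}$, so if one can show that the $g^{(k+1)}$-expansion of $\tg_{\lambda+1^\ell}^{(k+1)}$ is supported on partitions of length at least $\ell$, then shift invariance for the $g^{(k)}$'s (Theorem~\ref{shift-invariance}) carries each term back into $\Lambda_{(k)}$, yielding the desired membership.
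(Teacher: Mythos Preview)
Your treatment of (c) and (d) is correct and matches the paper's proof exactly. Your top-degree observation for (b)---that the highest-degree component of any $K(\Psi;M;\gamma)$ is $H(\Psi;\gamma)$, so $\tg_\lambda^{(k)}$ and $g_\lambda^{(k)}$ share the leading term $s_\lambda^{(k)}$---is also the same argument the paper uses (via Proposition~\ref{prop:triangular}).

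Where you diverge is in showing $\tg_\lambda^{(k)}\in\Lambda_{(k)}$, and here you are working far too hard. Neither straightening the summands $K(\Delta^k(\lambda);\Delta^{k+1}(\lambda);\lambda-\epsilon_T)$ into $g_\mu^{(k)}$'s nor controlling support under shift invariance is needed. The paper instead applies Proposition~\ref{prop:band-k-katalans-k-bdd-subspace}, whose hypothesis is immediate here: for $\Psi=\Delta^k(\lambda)$ one has ${\rm nr}(\Psi)_i\le k-\lambda_i$ (the nonroots in row $i$ occupy columns $i+1,\dots,\min(k-\lambda_i+i,\ell)$), so $\lambda_i+{\rm nr}(\Psi)_i\le k$ for all $i$ and ${\rm maxband}(\Delta^k(\lambda),\lambda)\le k$. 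That proposition in turn just expands $K(\Psi;M;\gamma)=\sum_{A\subset M}(-1)^{|A|}H(\Psi;\gamma-\epsilon_A)$ into Catalan functions and invokes the known fact that $H(\Psi;\gamma')\in\Lambda_{(k)}$ whenever ${\rm maxband}(\Psi,\gamma')\le k$. So membership in $\Lambda_{(k)}$ is a two-line check, not a delicate straightening argument; once you have it, your unitriangularity and basis arguments for (b) and (a) go through exactly as you wrote.
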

\begin{proof}
Property (c) is proved just as in Theorem~\ref{shift-invariance}, and (d) just as in Corollary \ref{con:hookg}.
For (a)--(b), a similar result will be proved for the  $\g_\lambda^{(k)}$'s in Proposition~\ref{prop:triangular},
which easily adapts to this setting.
\end{proof}

It is worth pointing out that having the Katalan formulations for (closed) $K$-$k$-Schur functions
readily enables us to complete the proof of Conjecture~\ref{eq:IIMconj}
for Grassmannian permutations outlined in \cite[Theorem~1.7]{IIM}.

\begin{prop}
\label{k-rectangle-bounded-qkt}
Conjecture~\ref{eq:IIMconj} holds for $w\in S_{k+1}$ with  $\text{\rm Des}(w) = \{d\}$.
In fact, in this case, we have
\begin{align}
\label{ep Grass g}
\Phi(\G_w^Q)=\frac{g_{\theta(w)'}}{g_{R_d}} =
\frac{\tilde{g}_{w}}{g_{R_d}} =
\frac{\tg^{(k)}_{\theta(w)'}}{g_{R_d}}=
\frac{g^{(k)}_{\theta(w)'}}{g_{R_d}}\,.
\end{align}
\end{prop}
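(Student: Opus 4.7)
The plan is to reduce the four-term chain~\eqref{ep Grass g} to the single identity $g_\lambda/g_{R_d}$ for $\lambda := \theta(w)'$, by showing $\lambda$ fits inside the transposed $k$-rectangle $R_d' = d^{k+1-d}$. First I would verify that for Grassmannian $w \in S_{k+1}$ with unique descent at $d$, $\theta(w)$ sits inside $R_d$. A direct calculation from the inversion-sequence description~\eqref{perm2part} of $\zeta(w)$ shows that each column of $\zeta(w)$ equals $\binom{k+1-i}{2}$ plus a nonnegative contribution bounded by the rectangle, so $\zeta(w)$ decomposes as $R_d$ together with a sub-$R_d$ partition; removing $k$-rectangles to form $\theta(w) = \zeta(w)_\downarrow$ keeps the result inside $R_d$. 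By the observation in~\S\ref{ss Katalan for quantum}, this forces $\theta(w)^{\omega_k} = \theta(w)'$, and $\lambda \subseteq d^{k+1-d}$ satisfies $\lambda_1 + \ell(\lambda) - 1 \le k$.

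Second, Corollary~\ref{k-rectangle-bounded-gk} gives $g_\lambda^{(k)} = g_\lambda$ and Proposition~\ref{p thick results}(d) gives $\tg^{(k)}_\lambda = g_\lambda$, immediately delivering three of the four equalities. What remains is (i) $\Phi(\mathfrak{G}_w^Q) = g_\lambda/g_{R_d}$ and (ii) $\tilde g_w = g_\lambda$. For (i) I would invoke \cite[Theorem~1.7]{IIM}, which proves exactly this formula for Grassmannian permutations \emph{conditional on} Corollary~\ref{k-rectangle-bounded-gk} (formerly \cite[Conjecture~44]{morse}); with Corollary~\ref{k-rectangle-bounded-gk} now proven, the IIM proof strategy carries through without modification.

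For (ii), unpacking the definition \eqref{gwiggle} gives $\tilde g_w = (1 - G_1^\perp)\sum_{\mu_1 \le k,\, w_\mu \le w_\lambda} g_\mu^{(k)}$. Every $\mu$ appearing is contained in $\lambda$ in Young's order (hence also satisfies the hook bound), so Corollary~\ref{k-rectangle-bounded-gk} permits the substitution $g_\mu^{(k)} = g_\mu$. The residual identity $(1-G_1^\perp)\sum_{\mu \subseteq \lambda} g_\mu = g_\lambda$ can be recognized as the geometric structure-sheaf-to-ideal-sheaf conversion $(1 - G_1^\perp)[O_{X_{w_\lambda}}] = [I_{\partial X_{w_\lambda}}] = g_\lambda^{(k)}$ described in the excerpt; alternatively, it can be proved algebraically via the dual Pieri rule for $G_1^\perp$ on dual stable Grothendieck polynomials, telescoping the cancellations across the Bruhat ideal below $\lambda$.

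The main obstacle is step (ii): the identification $\tilde g_w = g_\lambda$ requires controlling the Bruhat interval $\{w_\mu \le w_\lambda\}$ under the bijection $\w$ and checking the $G_1^\perp$ identity on the resulting hook-bounded subspace. If one accepts the geometric interpretation then the result is nearly automatic, but a fully algebraic verification demands careful handling of the dual Pieri expansion. The other steps are essentially routine given Corollary~\ref{k-rectangle-bounded-gk} and Proposition~\ref{p thick results}(d).
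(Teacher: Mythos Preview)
Your proposal is correct and follows essentially the same route as the paper. The paper supplies the precise citations you anticipate needing: $\theta(w)' \subset R_d'$ is quoted from \cite{ksplitcatalans}*{Lemma~7.5}, the Bruhat interval $\{w_\mu \le w_\lambda\}$ is identified with $\{\mu \subset \lambda\}$ via \cite{LMtaboncores}*{Proposition~40} (plus the core-equals-partition fact for $\mu$ in a $k$-rectangle), and the identity $(1-G_1^\perp)\sum_{\mu \subset \lambda} g_\mu = g_\lambda$ is taken from Takigiku~\cite{takigiku-later}.
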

\begin{proof}
The first equality of \eqref{ep Grass g} is established in Theorem~1.7 and Lemma 7.1 of \cite{IIM}.
The partition $\lambda=\theta(w)^{\omega_k} = \theta(w)'$ lies in a  $k$-rectangle
by \cite[Lemma~7.5]{ksplitcatalans}.
Thus, by Corollary~\ref{con:hookg} and
Proposition~\ref{p thick results} (d),
$g_{\theta(w)'}=g_{\theta(w)'}^{(k)}= \tg_{\theta(w)'}^{(k)}$.
It remains to prove  $g_{\theta(w)'} = \tilde{g}_{w}$.
Using again Corollary~\ref{con:hookg}  on the definition of
 $\tilde{g}_{w}$ in \eqref{gwiggle} gives
\begin{equation*}
\tilde{g}_{w} = (1-G_1^\perp) \big( \textstyle \sum_{w_\mu \le w_\lambda} g_\mu \big)=
(1-G_1^\perp) \big( \textstyle \sum_{\mu \subset \lambda} g_\mu \big) =
g_\lambda,
\end{equation*}
where the second equality follows
using~\cite{LMtaboncores}*{Proposition 40} in addition to the fact that \(\mu\) is equal to the
\((k+1)\)-core of \(\mu\) for \(\mu\)
lying in a \(k\)-rectangle (cores are discussed in
\S\ref{sec:ri-to-core-dict}),
and the last equality holds by the following result of Takigiku~\cite{takigiku-later}:
the map $1-G_1^\perp \colon \Lambda \to \Lambda$ is a ring automorphism
with inverse $F\colon h_i \mapsto \sum_{j \leq i} h_j$ and satisfies  $F(g_\nu) = \sum_{\mu \subset \nu} g_\nu$ for all  $\nu$.
\end{proof}

Another conjecture of~\cite{IIM} about the image of the quantum Grothendieck polynomials is that
\begin{align}\label{econj IIM}
\Phi(\G^Q_{w_0})=\frac{\prod_{i=1}^{k-1} g_{(k-i)^i}}{g_{R_1} \cdots g_{R_k}}\,.
\end{align}
We prove the corresponding result for the closed $k$-Schur Katalan functions:

\begin{prop}
\label{prop:tg-longest-word}
For $w_0$ the longest permutation in  $S_{k+1}$ and $\lambda=\theta(w_0)^{\omega_k}$,
$\tg^{(k)}_{\lambda} = \prod_{i=1}^{k-1} g_{(k-i)^i}.$
\end{prop}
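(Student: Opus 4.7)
The plan is to identify $\lambda = \theta(w_0)^{\omega_k}$ explicitly and then show that the raising-operator formula for $\tg^{(k)}_\lambda$ factors as the claimed product. Since $w_0 w_0 = e$, $\Inv(w_0 w_0) = 0$, so $\zeta(w_0)$ has columns of heights $\binom{k+1-i}{2}$ for $i = 1, \ldots, k$; equivalently, $\zeta(w_0) = (k-1, (k-2)^2, (k-3)^3, \ldots, 1^{k-1})$. Since $\zeta(w_0)$ has only $d-1$ parts equal to $k+1-d$, no $k$-rectangle $R_d$ appears as a subsequence, so $\zeta(w_0)$ is irreducible and $\theta(w_0) = \zeta(w_0)$. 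A computation of the $k$-conjugation via the affine symmetric group automorphism $\tau$ then shows $\lambda = \theta(w_0)^{\omega_k}$ is organized into $k-1$ ``blocks'' where block $\beta$ consists of $\beta$ equal rows of value $k - \beta$ (total length $\binom{k}{2}$).

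Next, I describe $\Delta^k(\lambda)$ block-wise: for $i$ in block $\beta$, the defining inequality $j > k - \lambda_i + i$ becomes $j > i + \beta$. Careful analysis shows $\Delta^+_\ell \setminus \Delta^k(\lambda)$ consists of all within-block pairs together with the cross-block ``anti-staircase'' pairs $(i, j)$ where $i$ sits at position $t$ in block $\beta$ and $j$ sits at position $s \leq t$ in block $\beta + 1$. I use Proposition~\ref{def:Kat} to write $\tg^{(k)}_\lambda$ explicitly, then apply the Pascal recurrence \eqref{pascal-kh-identity} in the form $(1-L)k_m^{(r)} = k_m^{(r-1)}$; counting multiplicities shows that after the lowering operators in $L(\Delta^k(\lambda))$, the superscripts within block $\beta$ become uniformly $\beta - 1$.

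I then establish a Jacobi--Trudi-type identity: applying $\prod_{1 \leq i < j \leq n}(1-R_{ij})$ to a product of $n$ copies of $k_c^{(n-1)}$ (one at each position) yields $g_{c^n}$. This follows from the Pascal identity combined with the symmetry-driven cancellation $(1-R_{ij})k_{m-1}^{(r)}k_m^{(r)} = 0$ when the two superscripts coincide, so one can iteratively reduce the uniform superscript presentation to the standard staircase form $(k_c^{(0)}, k_c^{(1)}, \ldots, k_c^{(n-1)})$ appearing in the determinantal definition of $g_{c^n}$. Applied block-by-block, the within-block raising operators together with the $L$-operators convert $k_\lambda$ into $\prod_{\beta=1}^{k-1} g_{(k-\beta)^\beta}$.

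The main obstacle is the final step: showing that the remaining cross-block anti-staircase raising operators collectively act as the identity on $\prod_\beta g_{(k-\beta)^\beta}$. Individually the $R_{ij}$ do not annihilate the expression, but the combined alternating product over the anti-staircase set does, as verified in the case $k=3$ where $R_{12}$ maps two terms of the $k_\gamma$-expansion of $g_{(2)}g_{(1,1)}$ to the same monomial. Establishing this rigorously in general---tracking how each anti-staircase $R_{ij}$ interacts with the signed Jacobi--Trudi expansion of the $g_{(k-\beta)^\beta}$ factors---is the technical heart of the proof; I expect it to require a delicate combinatorial reorganization of the operator product or an appeal to a Katalan-specific factorization lemma exploiting the anti-staircase structure.
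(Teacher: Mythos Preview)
Your proposal has a genuine gap, and it also diverges from the paper's route in a way that creates that gap rather than resolving it.

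\textbf{The gap.} You openly concede that the ``main obstacle''---showing that the product of $(1-R_{ij})$ over the cross-block anti-staircase pairs acts as the identity on $\prod_\beta g_{(k-\beta)^\beta}$---is unproved. Verifying it for $k=3$ and saying you ``expect it to require a delicate combinatorial reorganization'' is not a proof. Worse, the mechanism you suggest (tracking each $R_{ij}$ against the signed $k_\gamma$-expansion of each factor) is unlikely to succeed as stated: the operators $R_{ij}$ with $i,j$ in different blocks do not respect the tensor factorization into the separate $g_{(k-\beta)^\beta}$'s, so there is no reason a priori for them to cancel factor-by-factor. There is also a secondary concern: your description of $\lambda=\theta(w_0)^{\omega_k}$ as having block $\beta$ consisting of $\beta$ rows of value $k-\beta$ is precisely the description of $\theta(w_0)$ itself, and you give no argument that $\theta(w_0)$ is self-$k$-conjugate.

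\textbf{How the paper proceeds instead.} The paper sidesteps both issues. It first proves the un-conjugated identity $\prod_{i=1}^{k-1} g_{(k-i)^i}=\tg^{(k)}_{\theta(w_0)}$ by writing the left side as a \emph{single} Katalan function via the concatenation Lemma~\ref{katalan-concatenation}, namely $K(\rootconcat_i\emptyset_i;\rootconcat_i\emptyset_i;\gamma)$ with $\gamma=\theta(w_0)$, and then iteratively applying the Diagonal Removal Lemma~\ref{big-staircase-lemma} to deform the root ideal $\rootconcat_i\emptyset_i$ into $\Delta^k(\gamma)$ one diagonal at a time. This never requires showing that any cross-block operators vanish on a product; the identity holds at the level of Katalan functions throughout. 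Finally, the paper applies the involution $\Omega\colon h_r\mapsto g_{1^r}$, which satisfies $\Omega(\tg_\mu^{(k)})=\tg_{\mu^{\omega_k}}^{(k)}$ and sends $g_{(k-i)^i}\mapsto g_{(i)^{k-i}}$ (each factor lies in a $k$-rectangle, so $k$-conjugate equals ordinary conjugate), permuting the factors and leaving the product invariant. This yields the statement for $\lambda=\theta(w_0)^{\omega_k}$ without ever computing that $k$-conjugate explicitly.
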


Thus~\eqref{econj IIM} would now follow from Conjecture~\ref{c tg}(b).
Proposition~\ref{prop:tg-longest-word} is proved in \S\ref{proof-of-tg-longest-word}.

\subsection{Positivity conjectures for Katalan functions}

Given a root ideal $\Psi\subset\Delta_\ell^+$ and weight $\gamma\in\mathbb Z^\ell$,  define
\begin{align}
\label{ed maxband}
{\rm maxband}(\Psi,\gamma)=
\max\{\gamma_i+ {\rm nr}(\Psi)_i : i\in[\ell]\}, \quad \text{ for } \, {\rm nr}(\Psi)_i :=  \big|\big\{j \in \{i+1, \dots, \ell\} : (i,j)  \notin \Psi\big\}\big|.
\end{align}
We say $\alpha\in \Psi$ is a \emph{removable root of  $\Psi$}
when $\Psi \setminus \alpha$ is a root ideal and a root $\beta \in \Delta^+
\setminus \Psi$ is \emph{addable to $\Psi$} if $\Psi \cup \beta$ is a root ideal.
Define \(RC(\Psi)\) to be \(\Psi \setminus \{\text{removable roots of }\Psi\}\).
For a nonnegative integer $a$, iteratively define $RC^a(\Psi) = RC(RC^{a-1}(\Psi))$,
starting from $RC^0(\Psi)=\Psi$.

\begin{conjecture}\label{conj:Kpos}
For a root ideal \(\Psi \subset \Delta^+_\ell\) and \(\lambda \in \Par^k_\ell\)
such that maxband$(\Psi,\lambda)\leq k$,
\begin{align}
\label{ec Kpos1}
    K(\Psi;\Psi;\lambda) =
\sum_{\mu\in\Par^k_\ell \atop|\mu|\leq |\lambda|}
\, a_{\lambda \mu}\, \tg^{(k)}_\mu
    \quad\text{for}\quad (-1)^{|\lambda|-|\mu|} a_{\lambda \mu} \in
    \Z_{\geq 0}\,.
\end{align}
For $a\in \Z_{\geq 0}$,
\begin{align}
\label{ec Kpos2}
    K(\Psi;RC^a(\Psi);\lambda) =
\sum_{\mu\in\Par^k_\ell \atop|\mu|\leq |\lambda|}
\, b_{\lambda \mu}\, s^{(k)}_\mu
\quad\text{for}\quad b_{\lambda \mu} \in \Z_{\geq 0}\,.
\end{align}
\end{conjecture}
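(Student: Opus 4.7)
The plan is to establish both positivity statements by induction on the root ideal $\Psi$, reducing to base cases where the Katalan function has already been identified with a (closed) $k$-Schur Katalan or $K$-$k$-Schur object and then using elementary recurrences to propagate positivity.

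The starting point is a pair of recurrences obtained directly from Definition~\ref{def:Kat-g}. Because all raising and lowering operators commute, for any removable root $\alpha = (p,q) \in \Psi$ and any index $j \in \{1,\ldots,\ell\}$,
\begin{align*}
K(\Psi; M; \gamma) &= K(\Psi \setminus \alpha; M; \gamma) + K(\Psi; M; \gamma + \epsilon_p - \epsilon_q), \\
K(\Psi; M; \gamma) &= K(\Psi; M \cup \{j\}; \gamma) + K(\Psi; M; \gamma - \epsilon_j).
\end{align*}
These let us move between Katalan functions by shrinking $\Psi$ at the cost of a weight shift, or by enlarging $M$ at the cost of lowering $\gamma$.

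I would then drive the induction. At the top degree $|\mu| = |\lambda|$, the only surviving contribution from the $(1 - L_j)$'s is the constant term, so the top-degree piece of any $K(\Psi; M; \lambda)$ is the Catalan function $H(\Psi; \lambda)$, whose $k$-Schur positivity under $\text{maxband}(\Psi,\lambda) \le k$ is proved in \cite{catalans}; this handles the leading-term part of both (1) and (2). Lower-degree terms are accessed by applying the recurrences to push $(\Psi, M)$ toward a base case at $\Psi = \Delta^{k}(\lambda)$: for (1), the evaluation $K(\Delta^{k}(\lambda); \Delta^{k}(\lambda); \lambda) = \tg_\lambda^{(k)}$ of Definition~\ref{def:thickKatKks} is the trivial expansion, while for (2) with $a = 1$ we have $RC^1(\Delta^{k}(\lambda)) = \Delta^{k+1}(\lambda)$ and $K(\Delta^{k}(\lambda); \Delta^{k+1}(\lambda); \lambda) = g_\lambda^{(k)}$ by Theorem~\ref{formulations-are-equal}. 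Nested induction on $|\lambda|$ and on $|\Psi \setminus \Delta^{k}(\lambda)|$ should then let each recurrence step be absorbed into the inductive hypothesis, with the sign alternation in (1) coming from the minus signs in $(1-L_j)$ and the absence of such operators in $H(\Psi;\lambda)$ (top degree) making (2) unsigned.

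The main obstacle is twofold. First, the weight shifts $\gamma \mapsto \gamma + \epsilon_p - \epsilon_q$ produced by the root-removal recurrence can push $\text{maxband}$ past $k$ and thus leave the regime where positivity is known; these terms must be handled via straightening relations analogous to those in \cite{catalans}. Second --- and more seriously --- even the $\Psi = \Delta^{k}(\lambda)$ base cases require new inputs: part (1) needs enough structural information about the $\tg_\mu^{(k)}$ (such as the alternating dual Pieri rule of Conjecture~\ref{c tg}(c)) to drive the induction, and part (2) at $a = 0$ amounts to the new assertion that $\tg_\lambda^{(k)}$ is itself $k$-Schur positive. A cleaner route may be to first establish Conjecture~\ref{c tg}(c)--(e) for the $\tg_\mu^{(k)}$ by adapting the Pieri-style methods of \cite{catalans}, and only then return to the full Katalan positivity statement above.
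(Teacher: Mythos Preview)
The statement you are attempting to prove is Conjecture~\ref{conj:Kpos}, which the paper explicitly presents as an open conjecture; there is no proof in the paper to compare against. The remark following it notes that even the large-$k$ limit is already a strong open positivity statement, and that \eqref{ec Kpos2} vastly generalizes the (also open) $k$-Schur positivity of $g_\lambda^{(k)}$ from \cite{lss}*{Conjecture~7.20(1)}.

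Your proposal is not a proof but a proof \emph{strategy}, and you correctly identify its own gaps in the final paragraph. In particular, your inductive scheme ultimately rests on inputs that are themselves conjectural: you need something like Conjecture~\ref{c tg}(c)--(e) to handle the $\tg_\mu^{(k)}$ base case, and the $a=0$ instance of \eqref{ec Kpos2} at $\Psi=\Delta^k(\lambda)$ is exactly the unproved assertion that $\tg_\lambda^{(k)}$ is $k$-Schur positive. So the induction does not close. There is also a more immediate problem with the reduction step: applying the root-removal recurrence at a removable $\alpha=(p,q)$ produces $K(\Psi;M;\lambda+\epsilon_p-\epsilon_q)$, but your multiset $M$ stays fixed at $L(\Psi)$ (or $L(RC^a(\Psi))$), so after even one step the pair $(\Psi',M)$ is no longer of the form $(\Psi',\Psi')$ or $(\Psi',RC^a(\Psi'))$ required by the inductive hypothesis; you would need a simultaneous adjustment of $M$, and there is no evident identity of the type in Proposition~\ref{root-expansions} that accomplishes this while preserving the desired shape. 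The straightening issues you flag (weight shifts violating the maxband bound, and nonpartition weights) compound this.

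In short: the recurrences you cite are correct (they are Proposition~\ref{root-expansions}(b),(d)), and the observation $RC(\Delta^k(\lambda))=\Delta^{k+1}(\lambda)$ is right, but the strategy as written does not reduce the conjecture to known results; it reduces it to other conjectures plus a structural mismatch in the induction.
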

\begin{rmk}
The large  $k$ limit ($k \ge |\lambda|$) of Conjecture~\ref{conj:Kpos} is already quite strong:
for \(k \geq |\lambda| \geq |\mu|\), we have \(g_\mu^{(k)}
 = g_\mu\)~\cite{morse} and \(s_\mu^{(k)} = s_\mu\)~\cite{LMktab},
  so \eqref{ec Kpos1} and \eqref{ec Kpos2} become conjectures on
  $g_\mu$-alternating and Schur positivity, respectively.
Conjecture \eqref{ec Kpos1}
can be seen as a generalization of
  branching Conjecture~\ref{c tg}(\ref{tg-K-k-schur-alt}) as
  setting $\Psi = \Delta^{k-1}(\lambda)$ gives $K(\Psi;\Psi;\lambda) = \tg^{(k-1)}_\lambda$.
  And Conjecture \eqref{ec Kpos2} can be seen as a vast generalization of the
  conjectured $k$-Schur positivity of the  $g_\lambda^{(k)}$'s posed in
  \cite{lss}*{Conjecture 7.20(1)}.
\end{rmk}

\section{Basic properties of Katalan functions}
 \label{section 3}
We use the notation \([a,b]\) for \(\{i \in \Z \st a \leq i \leq b\}\) and \([n] = [1,n]\).
A \emph{multiset  $M$ on  $[\ell]$} is a multiset whose support is contained in  $[\ell]$;
its multiplicity
function is denoted \(m_M \from [\ell] \to \Z_{\geq 0}\).
For a set \(S \subset [\ell]\), denote \(\epsilon_{S} =
\sum_{i \in S} \epsilon_i\), and for \(\alpha = (i,j) \in
\Delta^+_\ell\), denote by \(\eroot{\alpha} = \epsilon_i - \epsilon_j\)
the corresponding positive root (not to be confused
with \(\epsilon_{\{i,j\}} = \epsilon_i+\epsilon_j\)).

Given a root ideal \(\Psi \subset \Delta^+_\ell\), a multiset \(M\)
on \([\ell]\), and \(\gamma \in \Z^\ell\), we represent the
Katalan function \(K(\Psi;M;\gamma)\) by the \(\ell \times \ell\)
grid of boxes (labelled by matrix-style coordinates)
with the boxes of \(\Psi\) shaded,
\(m_M(a)\) \(\bullet\)'s in column \(a\) (assuming \(m_M(a) < a\)), and
the entries of \(\gamma\) written along the diagonal.
\begin{example}\label{ex:katalan-conventions}
Let \(\Psi = \{(1,3),(1,4),(1,5),(2,3),(2,4),(2,5),(3,5)\}\subset \Delta^+_5\),
\(M = \{2,3,4,4,5,5\}\), and \(\gamma = (3,4,4,2,1)\).
The root ideal $\Psi$, its complement \(\Delta^+ \setminus \Psi = \{(1,2), (3,4), (4,5)\}\),
and $K(\Psi,M,\gamma)$ are depicted by:
 \[
\Psi=
    \ytableausetup{boxsize=1.3em,centertableaux}
    \begin{ytableau}
      {} & {} & *(red) \text{\small 1,3}& *(red) \text{\small 1,4}&
      *(red) \text{\small 1,5} \\
      {} & {} & *(red) \text{\small 2,3}& *(red) \text{\small 2,4}& *(red) \text{\small 2,5}\\
      {} & {} & {} & {} & *(red) \text{\small 3,5} \\
      {} & {} & {} & {} & {} \\
      {} & {} & {} & {} & {}
    \end{ytableau}
\qquad \qquad
\Delta^+_5 \setminus \Psi =
    \begin{ytableau}
      {} & *(lightpurple) \text{\small 1,2} & *(lightred) & *(lightred) & *(lightred) \\
      {} & {} & *(lightred) & *(lightred) & *(lightred) \\
      {} & {} & {} & *(lightpurple) \text{\small 3,4}  & *(lightred) \\
      {} & {} & {} & {} & *(lightpurple) \text{\small 4,5} \\
      {} & {} & {} & {} & {}
    \end{ytableau}
\qquad \qquad
  \ytableausetup{mathmode, boxsize=1.3em,centertableaux,nobaseline}
  K(\Psi;M;\gamma) =
  \begin{ytableau}
  3 & \bullet & *(red) \bullet & *(red) \bullet & *(red) \bullet \\
  {} & 4 & *(red) & *(red) \bullet & *(red) \bullet\\
  {} & {} & 4 & {} & *(red) \\
  {} & {} & {} & 2 & {} \\
  {} & {} & {} & {} & 1
\end{ytableau}
\,.
\]
\end{example}

The raising and lowering operators used in Section \ref{section 2} are informal and
are not well-defined operators on  $\Lambda$ despite their name.  The formal interpretation
of Definition~\ref{def:Kat-g} is as follows:
set $\mathbb{A} = \ZZ \llbracket \frac{z_1}{z_2}, \ldots,\frac{z_{\ell-1}}{z_\ell}\rrbracket [z_1^{\pm 1}, \ldots, z_\ell^{\pm 1}]$,
an arbitrary element of which has the form $\sum_{\gamma \in \ZZ^\ell} c_\gamma \mathbf{z}^\gamma$
where the support $\{\gamma \in \ZZ^\ell \mid c_\gamma \ne 0\}$ is contained in $Q^+ + F$ for some finite subset  $F \subset \ZZ^\ell$,
where $Q^+ :=$  $\ZZ_{\ge 0}\{\epsilon_1-\epsilon_2, \dots,
\epsilon_{\ell-1}-\epsilon_\ell \} \subset \ZZ^\ell$.
For a root ideal \(\Psi \subset \Delta^+_\ell\), multiset \(M\) on \([\ell]\), and
\(\gamma \in \Z^\ell\),
\begin{equation}
  \label{eq:formal-def-of-Kat}
  K(\Psi;M;\gamma) = g\left( \prod_{(i,j) \in \Psi}
    \left(1-\frac{z_i}{z_j}\right)^{-1} \prod_{j \in M}
    \left(1-\frac{1}{z_j}\right) \mathbf{z}^\gamma  \right) \,,
\end{equation}
where \(g \from \mathbb{A} \to \ZZ[h_1,h_2, \ldots]\)
is defined by
\(\sum_{\gamma \in \ZZ^\ell} c_\gamma \mathbf{z}^\gamma \mapsto
\sum_{\gamma \in \ZZ^\ell} c_\gamma g_\gamma\);  note that by \eqref{eq:dual-grothendieck-jacobi-trudi},  $g_\gamma = 0$ when $\gamma_i < i-\ell$, and hence $\sum_{\gamma} c_\gamma g_\gamma$ has finitely many nonzero terms and so indeed lies in  $\ZZ[h_1,h_2, \ldots]$.

Further, defining \(\kappa \from \mathbb{A} \to \ZZ[h_1, h_2, \ldots]\) by
\(\sum_{\gamma \in \ZZ^\ell} c_\gamma \mathbf{z}^\gamma \mapsto
\sum_{\gamma \in \ZZ^\ell} c_\gamma k_\gamma\),
it follows from \eqref{eq:dual-grothendieck-jacobi-trudi} that
\begin{align}
\label{eq:formal2}
  g (f)= \kappa \bigg( \prod_{1\leq i < j\leq \ell} \left(1-\frac{z_i}{z_j}\bigg)
  \cdot f \right)
\end{align}
for all  $f \in \mathbb{A}$.
Note that Proposition~\ref{def:Kat} now follows from~\eqref{eq:formal-def-of-Kat}--\eqref{eq:formal2}.

The symmetric group \(S_\ell\) acts on the ring \(\mathbb{A}\) by
permuting the  $z_i$.  In particular, the simple reflections \(s_1,\dots, s_{\ell-1}\)
act by  \(s_i \big( \sum_\gamma c_\gamma \mathbf{z}^\gamma\big) =
\sum_\gamma c_\gamma \mathbf{z}^{s_i\gamma}\), where \(s_i \gamma = (\gamma_1, \ldots, \gamma_{i-1},
\gamma_{i+1},\gamma_i, \gamma_{i+2}, \ldots)\).
We also consider an action of \(S_\ell\) on subsets \(\Psi \subset [\ell] \times [\ell]\)
defined by \(s_i \Psi = \{(s_i(a), s_i(b)) \st (a,b) \in
\Psi\}\), and an action on multisets \(M\) on  \([\ell]\)
with \(s_i M\) defined by its multiplicity function \(m_{s_i M}(a) = m_M(s_i(a))\) for all \(a \in [\ell]\).

\begin{prop}\label{g-straightening}
For any \(\gamma \in \Z^\ell\),
  \( g_\gamma - g_{\gamma-\epsilon_{i+1}} =
    g_{s_i \gamma - \epsilon_i} - g_{s_i \gamma + \epsilon_{i+1}-\epsilon_{i}}\).
Hence the operator identity
\[
g \circ \bigg( 1-\frac{1}{z_{i+1}} \bigg) \bigg(1+\frac{z_{i+1}}{z_i} s_i \bigg) = 0.
 \]
  \end{prop}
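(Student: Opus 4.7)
The plan is to prove the first (determinantal) identity by reducing each side to a single determinant via multilinearity in one row, then observing that the two resulting matrices differ by a swap of rows $i$ and $i+1$ together with a sign change in one row; the two sign changes cancel. The operator identity follows at once by applying the first identity to an arbitrary monomial $\mathbf{z}^\gamma$.

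Using the determinantal formula $g_\gamma = \det(k^{(p-1)}_{\gamma_p+q-p})_{p,q\in[\ell]}$ from \S\ref{section 2}, note that $\gamma$ and $\gamma-\epsilon_{i+1}$ agree outside position $i+1$. By multilinearity in row $i+1$, $g_\gamma - g_{\gamma-\epsilon_{i+1}} = \det P$, where row $i+1$ of $P$ has $q$-entry
\[
k^{(i)}_{\gamma_{i+1}+q-i-1} - k^{(i)}_{\gamma_{i+1}+q-i-2} = k^{(i-1)}_{\gamma_{i+1}+q-i-1}
\]
by \eqref{pascal-kh-identity}, and all other rows coincide with those of the matrix for $g_\gamma$. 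An analogous computation handles the right-hand side: setting $\mu = s_i\gamma - \epsilon_i$ and $\mu' = s_i\gamma + \epsilon_{i+1} - \epsilon_i$, the weights differ only in position $i+1$ (where $\mu_{i+1} = \gamma_i$ and $\mu'_{i+1} = \gamma_i+1$), so $g_\mu - g_{\mu'} = \det N$, where row $i+1$ of $N$ has $q$-entry
\[
k^{(i)}_{\gamma_i+q-i-1} - k^{(i)}_{\gamma_i+q-i} = -k^{(i-1)}_{\gamma_i+q-i},
\]
again by \eqref{pascal-kh-identity}. Crucially, row $i$ of $N$ reads $k^{(i-1)}_{\mu_i+q-i} = k^{(i-1)}_{\gamma_{i+1}+q-i-1}$, reflecting the transposition in $\mu$.

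Comparing: all rows $p \ne i, i+1$ agree in $P$ and $N$; row $i$ of $P$ equals row $i+1$ of $N$ up to the sign $-1$, and row $i+1$ of $P$ equals row $i$ of $N$. Thus $N$ is obtained from $P$ by swapping rows $i,i+1$ and negating one of them, so $\det N = (-1)(-1)\det P = \det P$. This gives $g_\gamma - g_{\gamma-\epsilon_{i+1}} = g_{s_i\gamma - \epsilon_i} - g_{s_i\gamma + \epsilon_{i+1} - \epsilon_i}$.

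For the operator identity, I would apply $(1-1/z_{i+1})(1+(z_{i+1}/z_i)s_i)$ to $\mathbf{z}^\gamma$, obtaining
\[
\mathbf{z}^\gamma - \mathbf{z}^{\gamma-\epsilon_{i+1}} + \mathbf{z}^{s_i\gamma+\epsilon_{i+1}-\epsilon_i} - \mathbf{z}^{s_i\gamma-\epsilon_i},
\]
and then apply $g$; the first identity says this vanishes, and since the operator sends any element of $\mathbb{A}$ to a convergent sum of such monomial contributions, the operator identity follows. The only real obstacle is the bookkeeping: one must consistently track the sign from \eqref{pascal-kh-identity} (which differs between the two sides because the collapsed difference $k^{(i)}_m-k^{(i)}_{m-1}$ versus $k^{(i)}_{m-1}-k^{(i)}_m$ carries opposite signs) against the sign from the row swap, and verify these cancel exactly as above.
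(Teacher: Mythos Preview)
Your proof is correct and follows essentially the same approach as the paper: both use multilinearity in row $i+1$ of the determinantal definition of $g_\gamma$, simplify the resulting row via the Pascal identity \eqref{pascal-kh-identity}, and conclude by observing that the two matrices differ by a row swap (the paper absorbs your extra sign by comparing against $g_{s_i\gamma+\epsilon_{i+1}-\epsilon_i} - g_{s_i\gamma-\epsilon_i}$ rather than the RHS directly). Your treatment is slightly more explicit about the simplified row entries and the derivation of the operator identity, but the argument is the same.
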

\begin{proof}
Using the definition $g_\gamma = \det(k_{\gamma_i+j-i}^{(i-1)})_{1 \leq i,j \leq \ell}$,
we can write \(g_\gamma - g_{\gamma-\epsilon_{i+1}} = \det(A)\),
for  $A$ the matrix
whose  $i+1$-st row is
$(k_{\gamma_{i+1}+j-i-1}^{(i)} - k_{\gamma_{i+1}+j-i-2}^{(i)})_{j \in [\ell]}$ and whose
other rows agree with the matrix defining  $g_\gamma$;
similarly, we can write \(g_{s_i \gamma + \epsilon_{i+1}-\epsilon_{i}}- g_{s_i \gamma - \epsilon_i} = \det(A') \).
Simplifying the  $i+1$-st rows of  $A$ and  $A'$ using~\eqref{pascal-kh-identity}, we see that
$A$ and  $A'$ differ by swapping their  $i$ and  $i+1$-st rows.
The result follows.
 \end{proof}

\begin{lem}\label{katalan-straightening}
  Let \(\Psi \subset \Delta^+\) be any root ideal and \(M\) on
  \([\ell]\) be any multiset such that
  \begin{enumerate}
  \item \(s_i \Psi = \Psi\) and
  \item \(m_M(i+1) = m_M(i)+1\).
  \end{enumerate}
  Then, for any \(\gamma \in \Z^\ell\), \[
    K(\Psi; M; \gamma) + K(\Psi; M; s_i\gamma - \epsilon_{i}
    + \epsilon_{i+1}) = 0 \,.
  \]
\end{lem}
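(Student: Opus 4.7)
The plan is to work directly in the formal framework of \eqref{eq:formal-def-of-Kat} and reduce the claim to Proposition~\ref{g-straightening}. The key point is that the hypotheses (a) and (b) are tailored so that, after peeling off a single factor of $(1-1/z_{i+1})$ from the multiset product, what remains is $s_i$-invariant; this is exactly the setup in which the operator identity $g \circ (1-1/z_{i+1})(1 + (z_{i+1}/z_i)\,s_i) = 0$ becomes applicable.

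Concretely, I would first factor
\[
\prod_{j \in M}\bigl(1 - 1/z_j\bigr) \;=\; \bigl(1 - 1/z_{i+1}\bigr)\,\prod_{j \in M'}\bigl(1 - 1/z_j\bigr),
\]
where $M'$ is obtained from $M$ by deleting one copy of $i+1$. Hypothesis (b) gives $m_{M'}(i) = m_{M'}(i+1) = m_M(i)$, so $s_i M' = M'$, and thus $\prod_{j \in M'}(1-1/z_j)$ is $s_i$-invariant. Hypothesis (a) similarly makes $\prod_{(a,b)\in \Psi}(1-z_a/z_b)^{-1}$ $s_i$-invariant, because $s_i$ permutes the factors of this product among themselves. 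Writing $A$ for the product of these two $s_i$-invariant expressions, we obtain
\[
K(\Psi;M;\gamma) \;=\; g\bigl(A\cdot (1-1/z_{i+1})\,\mathbf{z}^\gamma\bigr).
\]

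Next, I would observe the identity $\mathbf{z}^{s_i\gamma - \epsilon_i + \epsilon_{i+1}} = (z_{i+1}/z_i)\, s_i(\mathbf{z}^\gamma)$ in $\mathbb{A}$, which gives
\[
K(\Psi;M;\,s_i\gamma - \epsilon_i + \epsilon_{i+1}) \;=\; g\bigl(A\cdot (1-1/z_{i+1})\cdot (z_{i+1}/z_i)\,s_i(\mathbf{z}^\gamma)\bigr).
\]
Since $\mathbb{A}$ is commutative and $s_i$ is a ring automorphism of $\mathbb{A}$ fixing $A$, we may pull $A$ inside $s_i$ in the second expression: $A\cdot s_i(\mathbf{z}^\gamma) = s_i(A)\cdot s_i(\mathbf{z}^\gamma) = s_i(A\,\mathbf{z}^\gamma)$.

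Finally, I would apply the operator identity of Proposition~\ref{g-straightening} to $f = A\,\mathbf{z}^\gamma \in \mathbb{A}$, which yields
\[
g\bigl((1-1/z_{i+1})\,A\,\mathbf{z}^\gamma\bigr) \;+\; g\bigl((1-1/z_{i+1})(z_{i+1}/z_i)\,s_i(A\,\mathbf{z}^\gamma)\bigr) \;=\; 0,
\]
which is exactly the desired relation. There is no real obstacle: the only bookkeeping to be careful with is that the hypothesis $m_M(i+1) = m_M(i)+1$ is used with the correct sign convention (pulling out $(1-1/z_{i+1})$ rather than $(1-1/z_i)$) to match the operator identity in Proposition~\ref{g-straightening}, which has its $(1-1/z_{i+1})$ factor on the $(i+1)$-st coordinate. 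Everything else is routine manipulation inside the ring $\mathbb{A}$ and the $S_\ell$-action on it.
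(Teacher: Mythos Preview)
Your proposal is correct and follows essentially the same approach as the paper's own proof: peel off one factor of $(1-1/z_{i+1})$, observe that hypotheses (a) and (b) make the remaining factor $s_i$-invariant, and then apply the operator identity of Proposition~\ref{g-straightening} to $A\,\mathbf{z}^\gamma$. The only cosmetic difference is that the paper phrases the key step as commuting the operator $1+\frac{z_{i+1}}{z_i}s_i$ past multiplication by the $s_i$-invariant factor, whereas you phrase it element-wise as $A\cdot s_i(\mathbf{z}^\gamma)=s_i(A\,\mathbf{z}^\gamma)$; these are the same manipulation.
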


\begin{proof}
The map \(g\) from~\eqref{eq:formal-def-of-Kat} allows us to
express $K(\Psi; M; \gamma) + K(\Psi; M; s_i\gamma - \epsilon_{i} + \epsilon_{i+1})$ as
$$
g\circ
\left(
1-\frac{1}{z_{i+1}}
\right)
\prod_{(a,b) \in \Psi}
      \left( 1 - \frac{z_a}{z_b} \right)^{-1} \prod_{b \in
      M \setminus \{i+1\}} \left( 1-\frac{1}{z_{b}} \right)
      \left(1+\frac{z_{i+1}}{z_i}
      s_i\right) (\mathbf{z}^\gamma)\,.
$$
Since \(s_i \Psi = \Psi\) and $s_i(M\setminus\{i+1\}) = M\setminus\{i+1\}$,
the operator  $s_i$ commutes with multiplication by
 $\prod_{(a,b) \in \Psi}
      ( 1 - \frac{z_a}{z_b} )^{-1} \prod_{b \in M \setminus \{i+1\}} ( 1-\frac{1}{z_{b}} )$,
      hence so does the operator  $1+\frac{z_{i+1}}{z_i}s_i$.
Therefore $K(\Psi; M; \gamma) + K(\Psi; M; s_i\gamma - \epsilon_{i} + \epsilon_{i+1})$ equals
\begin{equation}
\label{eq:sumK}
     g \circ \left( 1-\frac{1}{z_{i+1}} \right) \left(1+\frac{z_{i+1}}{z_i} s_i\right)
\prod_{(a,b) \in \Psi}
      \left(1-\frac{z_a}{z_b}\right)^{-1} \prod_{b \in M
      \setminus \{i+1\}}
      \left(1-\frac{1}{z_b}\right)\,( \mathbf{z}^\gamma ) \,,
\end{equation}
which vanishes by Proposition~\ref{g-straightening}.
\end{proof}

\begin{lem}
\label{zero-lemma}
  Given a root ideal \(\Psi \subset \Delta^+_{\ell+1}\), a multiset
  \(M\) on \([\ell+1]\), and \(\gamma \in
  \Z^\ell\), we have that \[
    K(\Psi;M;(\gamma,0)) = K(\hat{\Psi};\hat{M};\gamma)\,,
  \]
  where \(\hat{\Psi} := \{(i,j) \in \Psi \st 1 \leq i < j \leq \ell\}\) and
  \(\hat{M} := \{j \in M \st 1 \leq j \leq \ell\}\).
\end{lem}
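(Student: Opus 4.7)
The plan is to work directly with the formal definition \eqref{eq:formal-def-of-Kat} and show that the factors involving $z_{\ell+1}$ collapse trivially. The proof has a preliminary determinantal identity and then a formal-series computation.

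First, I would establish the base case $g_{(\beta,0)} = g_\beta$ relating the $(\ell+1)\times(\ell+1)$ and $\ell\times\ell$ Jacobi--Trudi formulations. Expanding $\det(k_{(\beta,0)_i+j-i}^{(i-1)})_{1\le i,j\le \ell+1}$ along its last row, the row-$(\ell+1)$ entries are $k_{j-\ell-1}^{(\ell)}$ for $j\in[\ell+1]$; these vanish for $j\le\ell$ since $j-\ell-1<0$, and $k_0^{(\ell)}=\binom{\ell-1}{0}h_0=1$. Laplace expansion then yields $g_{(\beta,0)}=g_\beta$.

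Next, I would split the product inside $g$ in \eqref{eq:formal-def-of-Kat} into factors not involving $z_{\ell+1}$ and factors involving it:
\[
\prod_{(i,j)\in\Psi}\!\Big(1-\tfrac{z_i}{z_j}\Big)^{-1}\!\prod_{j\in M}\!\Big(1-\tfrac{1}{z_j}\Big)
\;=\;A\cdot B,
\]
where $A=\prod_{(i,j)\in\hat{\Psi}}(1-z_i/z_j)^{-1}\prod_{j\in\hat{M}}(1-1/z_j)$ lies in the subring of $\mathbb{A}$ generated by $z_1,\dots,z_\ell$, and
\[
B=\prod_{(i,\ell+1)\in\Psi}\!\Big(1-\tfrac{z_i}{z_{\ell+1}}\Big)^{-1}\!\Big(1-\tfrac{1}{z_{\ell+1}}\Big)^{m_M(\ell+1)}.
\]
Expanding $(1-z_i/z_{\ell+1})^{-1}=\sum_{a\ge 0}z_i^a z_{\ell+1}^{-a}$ and $(1-1/z_{\ell+1})=1-z_{\ell+1}^{-1}$, one sees that $B$ is a well-defined element of $\mathbb{A}$ involving only non-positive powers of $z_{\ell+1}$, with constant coefficient (in $z_{\ell+1}$) equal to $1$.

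Finally, since $\mathbf{z}^{(\gamma,0)}=\mathbf{z}^\gamma\cdot z_{\ell+1}^0$, every monomial $\mathbf{z}^\delta$ appearing in $A\cdot B\cdot \mathbf{z}^{(\gamma,0)}$ satisfies $\delta_{\ell+1}\le 0$. By the vanishing condition recorded just after \eqref{eq:formal-def-of-Kat} (namely $g_\delta=0$ when $\delta_{\ell+1}<(\ell+1)-(\ell+1)=0$ in the $(\ell+1)$-variable setting), every monomial with $\delta_{\ell+1}<0$ is annihilated by $g$, so only the $z_{\ell+1}^0$-part of $A\cdot B\cdot\mathbf{z}^{(\gamma,0)}$ contributes. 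That part is precisely $A\cdot \mathbf{z}^\gamma$ (using the constant-term computation above), and applying $g$ term-by-term via the preliminary identity $g_{(\beta,0)}=g_\beta$ gives
\[
K(\Psi;M;(\gamma,0))=g\bigl(A\cdot\mathbf{z}^\gamma\bigr)=K(\hat{\Psi};\hat{M};\gamma).
\]
The main (modest) obstacle is just being careful with the formal setup: verifying that $B$ genuinely lies in $\mathbb{A}$ and that the constant-term extraction in $z_{\ell+1}$ commutes with $g$. Once the formal framework is in hand, the argument is clean bookkeeping.
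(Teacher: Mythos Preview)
Your proof is correct. Both your argument and the paper's rest on the same underlying fact---that $k_0^{(\ell)}=1$ and $k_m^{(\ell)}=0$ for $m<0$---but you route it through the $g$-based formal-series definition \eqref{eq:formal-def-of-Kat}, while the paper uses the $k$-based formulation of Proposition~\ref{def:Kat}. In the paper's approach, since $k_{(\gamma,0)}=k_\gamma\cdot k_0^{(\ell)}=k_\gamma$, the operators $(1-L_{\ell+1})$ and $(1-R_{h,\ell+1})$ acting on $k_{(\gamma,0)}$ kill the second term outright (it has a negative last coordinate), so the factors involving index $\ell+1$ disappear in one line with no preliminary identity needed. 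Your approach requires the extra Laplace-expansion step $g_{(\beta,0)}=g_\beta$ to transfer between the $(\ell+1)$- and $\ell$-variable versions of $g$, and a constant-term extraction in $z_{\ell+1}$; this is a bit more work, but it has the minor conceptual advantage of staying entirely within Definition~\ref{def:Kat-g} without invoking Proposition~\ref{def:Kat}.
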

\begin{proof}
Proposition~\ref{def:Kat} implies that
  \begin{align*}
K(\Psi;M;(\gamma,0))
    = &\prod_{j \in \hat{M}} (1-L_j)
    \prod_{(i,j) \in \Delta_{\ell}^+ \setminus \hat{\Psi}} (1-R_{ij})
    \prod_{j=1}^{m_M(\ell+1)} (1-L_{\ell+1})
    \prod_{\substack{(h,\ell+1) \in \Delta_{\ell+1}^+ \setminus \Psi}}
        (1-R_{h,\ell+1}) \,
      k_{(\gamma,0)}
\\
    =& \prod_{j \in \hat{M}} (1-L_j)
    \prod_{(i,j) \in \Delta_{\ell}^+ \setminus \hat{\Psi}} (1-R_{ij})
\, k_{\gamma}\,,
  \end{align*}
since \(k_0^{(\ell)} = 1\) and \( k_{m}^{(\ell)} = 0\) for \(m < 0\).
\end{proof}
\begin{rmk}
\label{re:mismatched-lengths}
In light of Lemma~\ref{zero-lemma}, we sometimes abuse
notation by saying that, for
\(\ell' \geq \ell\), root ideal \(\Psi \subset \Delta^+_{\ell'}\), multiset
\(M\) on \([\ell']\), and \(\gamma \in \Z^\ell\),
\[
K(\Psi;M;\gamma) := K(\hat{\Psi};\hat{M};\gamma) \,.
\]
\end{rmk}
\begin{lem}\label{lem:gen-pascal-kh-identity}
  For \(r \geq 0, s \geq 1\), and \(\gamma \in \Z^s\), \[
    \prod_{j=r+1}^{r+s} (1-L_j)^r k_{(0^r,\gamma)} = k_\gamma \,.
  \]
\end{lem}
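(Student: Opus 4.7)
The plan is to reduce the statement to a one-variable identity on the inhomogeneous generators $k_m^{(p)}$ by observing that the padded zeros contribute only trivial factors. Using $k_\alpha = \prod_{i=1}^{r+s} k_{\alpha_i}^{(i-1)}$ from \eqref{eq:dual-grothendieck-jacobi-trudi}, together with the fact that $k_0^{(p)} = \binom{p-1}{0} h_0 = 1$ for all $p \geq 0$ (using the convention $\binom{n}{0} = 1$ for $n \in \Z$), the first $r$ factors of $k_{(0^r,\gamma)}$ collapse to $1$, giving
\[
k_{(0^r,\gamma)} = \prod_{j=r+1}^{r+s} k_{\gamma_{j-r}}^{(j-1)}.
\]
Because $L_j$ lowers only the $j$-th subscript of its argument, and these factors depend on pairwise disjoint subscripts, the operators $(1-L_j)^r$ in $\prod_{j=r+1}^{r+s}(1-L_j)^r$ commute and act independently on the corresponding factor of this product.

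The next step is to establish the one-variable identity $(1-L) k_m^{(p)} = k_m^{(p-1)}$ for $p, m \in \Z$, where $L$ denotes the operator lowering the $m$-subscript by $1$. This is simply a rewriting of the Pascal recursion \eqref{pascal-kh-identity}, which reads $k_{m-1}^{(p)} + k_m^{(p-1)} = k_m^{(p)}$. Iterating this $r$ times yields $(1-L)^r k_m^{(p)} = k_m^{(p-r)}$.

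Applying this to each factor, $(1-L_j)^r k_{\gamma_{j-r}}^{(j-1)} = k_{\gamma_{j-r}}^{(j-1-r)}$, and reassembling the product (reindexing with $i = j-r$) yields
\[
\prod_{j=r+1}^{r+s}(1-L_j)^r k_{(0^r,\gamma)} = \prod_{j=r+1}^{r+s} k_{\gamma_{j-r}}^{(j-1-r)} = \prod_{i=1}^{s} k_{\gamma_i}^{(i-1)} = k_\gamma,
\]
as claimed. There is no real obstacle here; the content of the lemma is just the clean column-shift behavior $(1-L) k_m^{(p)} = k_m^{(p-1)}$ combined with the observation that padding $\gamma$ by $r$ leading zeros is invisible up to an upward shift by $r$ in every superscript, which $(1-L_j)^r$ undoes in each column.
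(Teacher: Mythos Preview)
Your proof is correct and takes essentially the same approach as the paper: both reduce to the one-variable Pascal identity \eqref{pascal-kh-identity} applied factor by factor, using that $k_0^{(p)}=1$ and that the $L_j$ act independently on the factors of $k_{(0^r,\gamma)}$. The paper expands $(1-L)^r$ as a binomial sum $k_a^{(b-r)}=\sum_{i=0}^r(-1)^i\binom{r}{i}k_{a-i}^{(b)}$ and works from $k_\gamma$ toward the left-hand side, while you phrase the same iteration as the cleaner operator identity $(1-L)k_m^{(p)}=k_m^{(p-1)}$ and work in the other direction; the mathematical content is identical.
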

\begin{proof}
  We note that \[
    k_a^{(b-r)} = k_a^{(b-r+1)}-k_{a-1}^{(b-r+1)} = \cdots = \sum_{i=0}^r (-1)^i \binom{r}{i} k_{a-i}^{(b)}
  \]
  by iterating~\eqref{pascal-kh-identity}. Then, \[
    k_\gamma = k_{0^r}k_{\gamma_1}^{(0)}\cdots k_{\gamma_s}^{(s-1)} = k_{0^r} \left( \sum_{i_1=0}^r
      (-1)^{i_1} \binom{r}{i_1} k_{\gamma_1-i_1}^{(r)} \right)\cdots\left( \sum_{i_s=0}^r
      (-1)^{i_s} \binom{r}{i_s} k_{\gamma_s-i_s}^{(r+s-1)} \right)
    = \prod_{j=r+1}^{r+s} (1-L_j)^r k_{(0^r,\gamma)}\,.
  \]
\end{proof}
\begin{defn}
  Given root ideals \(\Psi \subset \Delta^+_\ell\) and \(\Psi' \subset
  \Delta^+_{\ell'}\), we define the root ideal \(\Psi \rootconcat \Psi' \subset
  \Delta^+_{\ell+\ell'}\) to be the result of placing \(\Psi\) and
  \(\Psi'\) catty-corner and including the full \(\ell \times \ell'\)
  rectangle of roots in between. Equivalently, \(\Psi \rootconcat
  \Psi'\) is determined by \[
    \Delta^+_{\ell+\ell'} \setminus (\Psi \rootconcat \Psi') =
    (\Delta^+_\ell \setminus \Psi)\,\, \disjunion\,\, \{(i+\ell,j+\ell) \st (i,j)
    \in \Delta^+_{\ell'} \setminus \Psi\}\,.
  \]
\end{defn}
For example,
  \[
\ytableausetup{mathmode, boxsize=.55em,centertableaux}
    \begin{ytableau}
      {} & *(red) & *(red) \\
      & {} &  \\
      &   & {}
    \end{ytableau}
\,\,
    \rootconcat
\,\,
    \begin{ytableau}
      {} &  & *(red)& *(red)\\
      & {} & & *(red)\\
      & &  {} &*(red) \\
      & & & {}
    \end{ytableau}
\,\,
    =
\,\,
    \begin{ytableau}
      {} & *(red) & *(red) & *(red) & *(red) & *(red) & *(red) \\
      & {} &  & *(red) & *(red) & *(red) & *(red) \\
      & & {} & *(red) &*(red) & *(red) & *(red) \\
      & & & {} & & *(red) & *(red)  \\
      & & & & {} & & *(red)\\
      & & & & & {} &*(red) \\
      & & & & & & {}
    \end{ytableau}
\]

\begin{lem}\label{katalan-concatenation}
Given $\lambda\in\mathbb Z^\ell,\mu\in\mathbb Z^{\ell'}$, root ideals
\(\Psi, \lowers \subset \Delta^+_\ell\), and root ideals \(\Psi', \lowers' \subset \Delta^+_{\ell'}\),
we have
\[
    K(\Psi; \lowers; \lambda)K(\Psi';\lowers';\mu) =
K(\Psi \rootconcat \Psi'; \lowers \rootconcat \lowers'; \lambda\mu)\,,
  \]
 where \(\lambda\mu=(\lambda_1,\ldots,\lambda_\ell,\mu_1,\ldots,\mu_{\ell'})\).
\end{lem}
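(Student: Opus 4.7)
The plan is to work at the level of the formal algebra $\mathbb{A}$, establishing a multiplication rule for the map $\kappa\from \mathbb{A}\to\Lambda$ from~\eqref{eq:formal2}, and then to match the factors on both sides of the claimed identity using how $\rootconcat$ decomposes root ideals and how $L$ decomposes into multisets.

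The central ingredient is the monomial identity
\begin{equation*}
k_\alpha \cdot k_\beta \;=\; \prod_{j=\ell+1}^{\ell+\ell'}(1-L_j)^\ell\, k_{(\alpha, \beta)}
\end{equation*}
for $\alpha\in\Z^\ell$ and $\beta\in\Z^{\ell'}$, where the right-hand side is read in the length-$(\ell+\ell')$ setting. This is essentially a repackaging of Lemma~\ref{lem:gen-pascal-kh-identity}: the operators $(1-L_{\ell+a})^\ell$ affect only the factors $k_{\beta_a}^{(\ell+a-1)}$ in $k_{(\alpha, \beta)}$, and expanding them binomially and applying the identity $k_m^{(b-r)}=\sum_{i=0}^r (-1)^i\binom{r}{i}k_{m-i}^{(b)}$ converts each such factor into $k_{\beta_a}^{(a-1)}$. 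By linearity this promotes to the formal multiplication rule
\begin{equation*}
\kappa\bigl(F_1(z_1, \ldots, z_\ell)\bigr)\,\kappa\bigl(F_2(z_1, \ldots, z_{\ell'})\bigr) \;=\; \kappa\!\left(\prod_{j=\ell+1}^{\ell+\ell'}\Bigl(1-\tfrac{1}{z_j}\Bigr)^{\!\ell}\, F_1(z_1,\ldots,z_\ell)\, F_2(z_{\ell+1}, \ldots, z_{\ell+\ell'})\right)
\end{equation*}
for appropriate $F_1,F_2$ in $\mathbb{A}$ with $\ell$ and $\ell'$ variables respectively.

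Next I would apply this rule with $F_1,F_2$ the formal expressions for $K(\Psi;\lowers;\lambda)$ and $K(\Psi';\lowers';\mu)$ supplied by Proposition~\ref{def:Kat}. The raising-operator pieces from $F_1$ and the shifted $F_2$ involve disjoint coordinate ranges and combine into $\prod_{(i,j) \in \Delta^+_{\ell+\ell'} \setminus (\Psi \rootconcat \Psi')}(1-z_i/z_j)$, since by definition of $\rootconcat$ the complement of $\Psi\rootconcat\Psi'$ is exactly $(\Delta^+_\ell\setminus \Psi)\sqcup \{(i+\ell,j+\ell):(i,j)\in\Delta^+_{\ell'}\setminus \Psi'\}$ (the full $\ell\times\ell'$ rectangle between them lies inside $\Psi\rootconcat\Psi'$ and thus contributes nothing to the complement). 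The lowering-operator pieces from $F_1$, the shifted $F_2$, and the extra factor $\prod_{j=\ell+1}^{\ell+\ell'}(1-1/z_j)^\ell$ combine into $\prod_{j\in L(\lowers \rootconcat \lowers')}(1-1/z_j)$, since each column $j\in[\ell+1,\ell+\ell']$ of that same rectangle contributes precisely $\ell$ copies of $j$ to $L(\lowers\rootconcat \lowers')$.

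Reapplying Proposition~\ref{def:Kat} in reverse then identifies the resulting expression as $K(\Psi \rootconcat \Psi'; \lowers \rootconcat \lowers'; \lambda\mu)$, completing the argument. The main step to get right is the initial multiplication rule: since $\kappa$ is not a ring homomorphism, the product $k_\alpha k_\beta$ has to be rewritten as $\kappa$ applied to a single formal expression together with a precise correction factor, and Lemma~\ref{lem:gen-pascal-kh-identity} supplies exactly this correction. Once this is in place, the remainder is a bookkeeping exercise matching factors under the decompositions of $\Psi\rootconcat \Psi'$ and $L(\lowers\rootconcat \lowers')$.
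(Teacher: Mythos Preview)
Your proposal is correct and follows essentially the same approach as the paper: both arguments use Proposition~\ref{def:Kat}, decompose the complement $\Delta^+_{\ell+\ell'}\setminus(\Psi\rootconcat\Psi')$ and the multiset $L(\lowers\rootconcat\lowers')$ in the evident way, and invoke Lemma~\ref{lem:gen-pascal-kh-identity} to account for the extra $\ell$ lowering-operator factors in columns $\ell+1,\dots,\ell+\ell'$. The only difference is organizational---you package the key identity as a general multiplication rule for $\kappa$ before specializing, while the paper starts from $K(\Psi\rootconcat\Psi';\lowers\rootconcat\lowers';\lambda\mu)$ and breaks it down---but the substantive content is the same.
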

\begin{proof}
By Proposition \ref{def:Kat},
  \[
    K(\Psi \rootconcat \Psi'; \lowers \rootconcat \lowers'; \lambda\mu) =
\prod_{(i,j) \in \lowers \rootconcat \lowers'}
    (1-L_j) \prod_{(i,j) \in \Delta^+_{\ell+\ell'} \setminus \Psi
      \rootconcat \Psi'} (1-R_{ij}) k_{\lambda\mu} \,.
  \]
However, since \(\Delta^+_{\ell+\ell'} \setminus \Psi \rootconcat \Psi'\)
has no roots in \(\{(r,s) \st 1 \leq r \leq \ell, \ell+1 \leq s \leq
  \ell+\ell'\}\),
\[
    K(\Psi \rootconcat \Psi'; \lowers \rootconcat \lowers'; \lambda\mu) =
    \prod_{(i,j) \in \lowers \rootconcat \lowers'} (1-L_j)
    \prod_{(i,j) \in \Delta^+_\ell \setminus \Psi} (1-R_{ij})
    \prod_{(i,j) \in \Delta^+_{\ell'} \setminus \Psi'} (1-R_{i+\ell,
      j+\ell}) k_{\lambda \mu}
\,.
  \]
By definition of \(\lowers \rootconcat \lowers'\),
\(\prod_{(i,j) \in \lowers \rootconcat \lowers'} (1-L_j)
= \prod_{(i,j) \in \lowers} (1-L_j)
    \prod_{(i,j) \in \lowers'} (1-L_{\ell+j})
    \prod_{j=\ell+1}^{\ell+\ell'} (1-L_j)^\ell
\).
Noting \(k_{\lambda \mu} = k_\lambda k_{(0^\ell,\mu)}\), we thus have
\begin{align*}
    \hspace{-0.3in}
    K(\Psi \rootconcat \Psi'; \lowers \rootconcat \lowers'; \lambda\mu) =
&
\prod_{(i,j) \in \lowers} (1-L_j)
      \prod_{(i,j) \in \Delta^+_\ell
        \setminus \Psi} (1-R_{ij})\, k_\lambda
\\
&
     \times
     \prod_{j=\ell+1}^{\ell+\ell'} (1-L_j)^\ell
     \prod_{(i,j)
        \in \lowers'} (1-L_{\ell+j})
      \prod_{(i,j) \in \Delta^+_{\ell'}
     \setminus \Psi'} (1-R_{i+\ell, j+\ell})
       k_{(0^\ell, \mu)} \,.
\end{align*}
  The first line is $K(\Psi;\lowers;\mu)$. To see the second line is  $K(\Psi';\lowers';\mu)$, expand
  \(
     \prod_{(i,j)
        \in \lowers'} (1-L_{\ell+j})
      \prod_{(i,j) \in \Delta^+_{\ell'}
     \setminus \Psi'} (1-R_{i+\ell, j+\ell})
       k_{(0^\ell, \mu)}  = \sum_\gamma k_{(0^\ell,\gamma)},
  \)
  and note for each summand, $\prod_{j=\ell+1}^{\ell+\ell'} (1-L_j)^\ell k_{(0^\ell,\gamma)} = k_\gamma$
  by Lemma~\ref{lem:gen-pascal-kh-identity}.
\end{proof}

\begin{prop}
\label{root-expansions}
  Let \(\Psi \subset \Delta^+\) be a root ideal, \(M\) on \([\ell]\) be a multiset, and \(\mu \in
  \Z^\ell\). Then,
  \begin{enumerate}
  \item for any addable root \(\beta\) of \(\Psi\), \[
      K(\Psi;M;\mu) = K(\Psi \union \beta; M; \mu) -
      K(\Psi \union \beta; M; \mu + \eroot{\beta})\,;
    \]
  \item for any removable root \(\alpha\) of \(\Psi\), \[
      K(\Psi;M;\mu) = K(\Psi \setminus \alpha; M; \mu) +
      K(\Psi; M; \mu+\eroot{\alpha})\,;
    \]
  \item for any $y \in M$,
\[ K(\Psi;M;\mu) = K(\Psi;M\setminus y;\mu) -
      K(\Psi;M \setminus y;\mu-\epsilon_{y})\,;
    \]
  \item for any $y \in [\ell]$,
\[
      K(\Psi;M;\mu) = K(\Psi;M\dunion y;\mu) + K(\Psi;M;\mu-\epsilon_{y})\,.
    \]
  \end{enumerate}
\end{prop}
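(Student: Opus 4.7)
The plan is to prove all four identities uniformly by working with the formal rational-function model for Katalan functions provided by \eqref{eq:formal-def-of-Kat}. Recall
\[
K(\Psi;M;\mu) \;=\; g\!\left( \prod_{\alpha \in \Psi}(1-z_\alpha)^{-1}\,\prod_{j \in M}(1-z_j^{-1})\,\mathbf{z}^{\mu} \right),
\]
where, for $\alpha=(i,j)$, we write $z_\alpha := z_i/z_j$, and note $z_\alpha \mathbf{z}^\mu = \mathbf{z}^{\mu+\eroot{\alpha}}$ and $z_y^{-1}\mathbf{z}^\mu = \mathbf{z}^{\mu-\epsilon_y}$. The key point is that adding or removing a factor from either product in the integrand is just multiplication by $(1-z_\alpha)^{\pm 1}$ or $(1-z_y^{-1})^{\pm 1}$, which, after distributing, expresses the change in terms of two other Katalan functions via $g(\mathbf{z}^\mu \pm \mathbf{z}^{\mu'})$.

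For part (a), since $\beta$ is addable to $\Psi$, the set $\Psi \cup \beta$ is again a root ideal, so $K(\Psi\cup\beta;M;\mu)$ is well-defined. Then
\[
\prod_{\alpha \in \Psi}(1-z_\alpha)^{-1}
= (1-z_\beta)\prod_{\alpha\in \Psi\cup\beta}(1-z_\alpha)^{-1},
\]
and distributing $(1-z_\beta)$ across $\mathbf{z}^\mu$ yields $\mathbf{z}^\mu-\mathbf{z}^{\mu+\eroot{\beta}}$; applying $g$ gives (a). Part (b) is completely analogous: since $\alpha$ is removable, $\Psi\setminus\alpha$ is a root ideal, and the identity $(1-z_\alpha)^{-1}=1+z_\alpha(1-z_\alpha)^{-1}$ splits
\[
\prod_{\alpha' \in \Psi}(1-z_{\alpha'})^{-1}\,\mathbf{z}^\mu
= \prod_{\alpha' \in \Psi\setminus\alpha}(1-z_{\alpha'})^{-1}\,\mathbf{z}^\mu
+ \prod_{\alpha'\in\Psi}(1-z_{\alpha'})^{-1}\,\mathbf{z}^{\mu+\eroot{\alpha}}.
\]
Applying $g$ produces the two summands $K(\Psi\setminus\alpha;M;\mu)$ and $K(\Psi;M;\mu+\eroot{\alpha})$.

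Parts (c) and (d) are the analogous manipulations performed on the $M$-factor instead of the $\Psi$-factor. For (c), factor out $(1-z_y^{-1})$ from $\prod_{j \in M}(1-z_j^{-1})$ and distribute over $\mathbf{z}^\mu$ to get $\mathbf{z}^\mu - \mathbf{z}^{\mu-\epsilon_y}$. For (d), insert the factor $(1-z_y^{-1})$ corresponding to adjoining $y$ to $M$, distribute, and solve for $K(\Psi;M;\mu)$.

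There is no real obstacle: the only subtlety is to make sure, for (a) and (b), that $\Psi\cup\beta$ (respectively $\Psi\setminus\alpha$) is a root ideal so that the intermediate Katalan functions are defined in the sense of Definition~\ref{def:Kat-g}, which is exactly the hypotheses that $\beta$ is addable and $\alpha$ is removable. Everything else is just formal manipulation in $\mathbb{A}$ followed by applying the linear map $g$; no straightening lemma (Lemma~\ref{katalan-straightening}) or concatenation argument is needed.
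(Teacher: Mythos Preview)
Your proof is correct and follows essentially the same idea as the paper's: factor out a single $(1-z_\beta)^{\pm 1}$ or $(1-z_y^{-1})^{\pm 1}$ from the defining product, distribute it over $\mathbf{z}^\mu$, and use linearity. The only cosmetic difference is that the paper works with the equivalent formulation of Proposition~\ref{def:Kat} (the finite product $\prod_{(i,j)\in\Delta^+\setminus\Psi}(1-R_{ij})\,k_\mu$) rather than the inverse product over $\Psi$ via the map $g$, and it derives (b) and (d) by simply rearranging (a) and (c) instead of using the identity $(1-z_\alpha)^{-1}=1+z_\alpha(1-z_\alpha)^{-1}$ explicitly.
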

\begin{proof}
The first identity follows directly from Proposition~\ref{def:Kat}:
\[ K(\Psi;M;\mu)
     = \prod_{j \in M}(1-L_j) \prod_{(i,j) \in
      \Delta^+\setminus\Psi}(1-R_{ij}) k_\mu
     = \prod_{j \in M}(1-L_j) \prod_{(i,j) \in
      \Delta^+ \setminus (\Psi \union \beta)}(1-R_{ij})
      (k_\mu-k_{\mu+\eroot{\beta}})\,.
\]
Part (b) is then obtained by applying (a) with \(\Psi = \Psi
\setminus \alpha\) and \(\beta = \alpha\).
A similar computation gives (c):
\[
  K(\Psi;M;\mu)
    = \prod_{j \in M}(1-L_j) \prod_{(i,j) \in
      \Delta^+\setminus\Psi}(1-R_{ij}) k_\mu\\
    = \prod_{j \in M \setminus y}(1-L_j)\prod_{(i,j) \in
      \Delta^+\setminus\Psi}(1-R_{ij}) (k_\mu - k_{\mu-\epsilon_{y}})\,,
\]
and (d) is obtained by applying (c) with $M \dunion \{y\}$ in place of $M$.
\end{proof}

These root expansions give rise to other powerful identites, derived by their successive application.

\begin{lem}\label{sliding-lemma}
Let \(\Psi \subset \Delta_\ell^+\), \(M\) be a multiset on \([\ell]\), and \(\mu \in \Z^\ell\) with \(\mu_\ell = 1\).
If $\ell\in M$ and \(\Psi\) has a removable root \(\alpha = (x,\ell)\) for some $x$, then
\[
    K(\Psi;M;\mu) = K(\Psi \setminus \alpha; M \setminus
    \ell; \mu) + K(\hat{\Psi} ; \hat{M} \dunion x;(\mu_1,\ldots,\mu_{\ell-1})+\epsilon_x)\,,
  \]
where $\hat{\Psi}=\{(i,j)\in\Psi \st j<\ell\}$ and $\hat{M}
=\{j \in M \st j<\ell\}$.
\end{lem}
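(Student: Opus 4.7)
The plan is to derive the identity as a carefully orchestrated chain of applications of the four elementary root-expansions in Proposition~\ref{root-expansions}, interleaved with two uses of the vanishing Lemma~\ref{zero-lemma}. The basic observation is that the hypotheses ``$\mu_\ell=1$'', ``$\ell\in M$'', and ``$\alpha=(x,\ell)$ removable'' together allow us to peel off everything located in column $\ell$ and reduce to a Katalan function of length $\ell-1$.

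First I would apply Proposition~\ref{root-expansions}(b) to the removable root $\alpha=(x,\ell)$, writing
\[
K(\Psi;M;\mu) \,=\, K(\Psi\setminus\alpha;\,M;\,\mu) \,+\, K(\Psi;\,M;\,\mu+\epsilon_x-\epsilon_\ell).
\]
Then, using the hypothesis $\ell\in M$, I would apply Proposition~\ref{root-expansions}(c) with $y=\ell$ to the first summand, producing
\[
K(\Psi\setminus\alpha;\,M\setminus\ell;\,\mu) \,-\, K(\Psi\setminus\alpha;\,M\setminus\ell;\,\mu-\epsilon_\ell).
\]
The first of these is exactly the term $K(\Psi\setminus\alpha;\,M\setminus\ell;\,\mu)$ appearing on the right-hand side of the desired identity, so everything that remains is to show that the two leftover terms $-K(\Psi\setminus\alpha;\,M\setminus\ell;\,\mu-\epsilon_\ell)$ and $K(\Psi;\,M;\,\mu+\epsilon_x-\epsilon_\ell)$ collapse to $K(\hat\Psi;\,\hat M\dunion x;\,(\mu_1,\ldots,\mu_{\ell-1})+\epsilon_x)$.

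Next, since $\mu_\ell=1$ and $x<\ell$, both leftover terms have last entry equal to $0$. I would invoke Lemma~\ref{zero-lemma} on each of them. Because $\alpha$ sits in column $\ell$, the truncation $\widehat{\Psi\setminus\alpha}$ and $\hat\Psi$ coincide, and similarly $\widehat{M\setminus\ell}=\hat M$, giving
\[
K(\Psi\setminus\alpha;\,M\setminus\ell;\,\mu-\epsilon_\ell) = K(\hat\Psi;\,\hat M;\,(\mu_1,\ldots,\mu_{\ell-1})),
\]
\[
K(\Psi;\,M;\,\mu+\epsilon_x-\epsilon_\ell) = K(\hat\Psi;\,\hat M;\,(\mu_1,\ldots,\mu_{\ell-1})+\epsilon_x).
\]
Finally I would apply Proposition~\ref{root-expansions}(d) with $y=x$ to the second of these, yielding
\[
K(\hat\Psi;\,\hat M;\,(\mu_1,\ldots,\mu_{\ell-1})+\epsilon_x) = K(\hat\Psi;\,\hat M\dunion x;\,(\mu_1,\ldots,\mu_{\ell-1})+\epsilon_x) + K(\hat\Psi;\,\hat M;\,(\mu_1,\ldots,\mu_{\ell-1})).
\]
The trailing term on the right cancels the negative leftover contribution above, producing exactly the second summand in the lemma's conclusion.

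The entire argument is a scripted cascade of the root-expansion calculus, so there is no conceptual hurdle. The only thing to watch carefully is the bookkeeping in the middle step: one must confirm that, after invoking Lemma~\ref{zero-lemma}, the two truncated Katalan functions share the \emph{same} root ideal $\hat\Psi$ and multiset $\hat M$, even though they were obtained from $(\Psi\setminus\alpha,M\setminus\ell)$ and $(\Psi,M)$ respectively. This is exactly what makes the telescoping work, and it relies crucially on $\alpha$ lying in column $\ell$ and the removed copy of $\ell$ lying in the position being truncated. Beyond this, the proof is routine.
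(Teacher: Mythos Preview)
Your proof is correct and follows essentially the same route as the paper: expand on the removable root $\alpha$ via Proposition~\ref{root-expansions}(b), then on $\ell\in M$ via (c), apply Lemma~\ref{zero-lemma} to the two terms with vanishing last entry, and recombine. The only cosmetic difference is that you invoke part~(d) in the final step where the paper cites part~(c), but these are the same identity rearranged.
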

\begin{proof}
By Proposition~\ref{root-expansions},
we expand first on the removable root \(\alpha=(x,\ell)\) of \(\Psi\) and then on $\ell \in M$, to obtain
\[
    K(\Psi;M;\mu) = K(\Psi \setminus \alpha ;M;\mu) + K(\Psi;M;\mu+ \eroot{\alpha}) =
K(\Psi \setminus \alpha; M \setminus \ell; \mu)
-    K(\Psi \setminus \alpha; M \setminus \ell; \mu-\epsilon_\ell) +
K(\Psi; M; \mu+\eroot{\alpha})\,.
  \]
Lemma~\ref{zero-lemma} allows the substitution of $K(\Psi \setminus \alpha; M \setminus \ell; \mu-\epsilon_\ell)=
    K(\hat{\Psi} ; \hat{M} ; \hat \mu)$ for $\hat \mu=(\mu_1,\ldots,\mu_{\ell-1})$, as well as
    $K(\Psi ; M ; \mu+\eroot{\alpha})= K(\hat{\Psi} ; \hat{M} ; \hat \mu + \epsilon_x)$.
Proposition~\ref{root-expansions}(c) on column $x$ then gives
$- K(\hat{\Psi} ; \hat{M} ; \hat \mu)  + K(\hat{\Psi}; \hat{M}; \hat\mu+\epsilon_x)
= K(\hat{\Psi}; \hat{M}\dunion x; \hat \mu+\epsilon_x)$.
\end{proof}

\begin{example}
  We apply Lemma~\ref{sliding-lemma} to the following scenario, with  $\ell = 7$  and root \(\alpha = (4,7)\):
  \[
    \ytableausetup{mathmode, boxsize=.9em,centertableaux,nobaseline}
    \begin{ytableau}
      4 & *(red) & *(red)\bullet & *(red)\bullet & *(red)\bullet & *(red)\bullet & *(red)\bullet \\
      {} & 3 & {} & *(red) & *(red)\bullet & *(red)\bullet & *(red)\bullet \\
      {} & {} & 1 & {} & {} & *(red)\bullet & *(red)\bullet \\
      {} & {} & {} & 1 & {} & {} & *(red)\bullet \\
      {} & {} & {} & {} & 1 & {} & {} \\
      {} & {} & {} & {} & {} & 1 & {} \\
      {} & {} & {} & {} & {} & {} & 1
    \end{ytableau}
    =
    \begin{ytableau}
      4 & *(red) & *(red)\bullet & *(red)\bullet & *(red)\bullet & *(red)\bullet & *(red)\bullet \\
      {} & 3 & {} & *(red) & *(red)\bullet & *(red)\bullet & *(red)\bullet \\
      {} & {} & 1 & {} & {} & *(red)\bullet & *(red)\bullet \\
      {} & {} & {} & 1 & {} & {} & {} \\
      {} & {} & {} & {} & 1 & {} & {} \\
      {} & {} & {} & {} & {} & 1 & {} \\
      {} & {} & {} & {} & {} & {} & 1
    \end{ytableau}
    +
    \begin{ytableau}
      4 & *(red) & *(red)\bullet & *(red)\bullet & *(red)\bullet & *(red)\bullet\\
      {} & 3 & {} & *(red)\bullet & *(red)\bullet & *(red)\bullet\\
      {} & {} & 1 & {} & {} & *(red)\bullet \\
      {} & {} & {} & 2 & {} & {} \\
      {} & {} & {} & {} & 1 & {} \\
      {} & {} & {} & {} & {} & 1
    \end{ytableau}
\]
\end{example}
\section{Mirror lemmas and straightening relations}

Although a Schur function can be associated to generic $\gamma\in\mathbb Z^\ell$, $s_\gamma$ always either vanishes or
straightens into a single $s_\mu$, up to sign, for a partition $\mu$.
Lemma~\ref{katalan-straightening} shows that Katalan functions satisfy
a straightening relation as well. From this, we deduce adaptations of
the \emph{mirror
  lemmas} of~\cite{catalans} to the \(K\)-theoretic setting and some useful consequences.

\subsection{Root ideal combinatorics}
We begin by reviewing some notation from~\cite{catalans}.

Let $\Psi \subset \Delta^+_\ell$ be a root ideal and $x\in[\ell]$.
If there is a removable root  $(x,j)$ of  $\Psi$, then define $\down_\Psi(x) = j$; otherwise, $\down_\Psi(x)$ is undefined.
Similarly, if there is a removable root $(i,x)$ of  $\Psi$, then define $\upp_\Psi(x) = i$; otherwise, $\upp_\Psi(x)$ is undefined.
The \emph{bounce graph} of a root ideal  $\Psi \subset \Delta^+_\ell$ is the graph on the vertex set $[\ell]$
with edges $(r, \down_\Psi(r))$ for each $r\in [\ell]$ such that $\down_\Psi(r)$ is defined.
The bounce graph of  $\Psi$ is a disjoint union of paths called \emph{bounce paths of  $\Psi$}.

For each vertex $r \in [\ell]$,
distinguish \(\chainup_\Psi(r)\) to be the minimum element of the
bounce path of \(\Psi\) containing \(r\).
For  $a,b \in [\ell]$ in the same bounce path of  $\Psi$ with  $a\le b$, we define
\[ \bpath_\Psi(a,b) = \{a, \down_\Psi(a), \down^2_\Psi(a), \dots, b\}, \]
i.e., the set of indices in this path lying between  $a$ and  $b$.
We also set
$\uppath_\Psi(r)$ to be %
$\bpath_\Psi(\chainup_\Psi(r),r)$ for any $r \in [\ell]$.

\begin{example}
\label{ex:downpath}
A $\bpath$ and $\uppath$ for the root ideal $\Psi$ are given below:
\ytableausetup{mathmode, boxsize=1.03em,centertableaux}
\[\begin{array}{cccccc}
{\tiny \begin{ytableau}
~ & *(red) & *(red)& *(red) & *(red)&*(red)&*(red)&*(red)&*(red)&*(red)\\
~ & *(blue!20) & & & *(red)&*(red)&*(red)&*(red)&*(red)&*(red)\\
~ & & & & &*(red)& *(red) & *(red) &*(red)&*(red)\\
~ & & & & & &*(red)&*(red)&*(red)&*(red)\\
~ & & & & *(blue!20) & & &*(red)&*(red)&*(red)\\
~ & & & & & & & & &*(red)\\
~ & & & & & & & & &*(red) \\
~ & & & & & & & *(blue!20) & & *(red)\\
~ & & & & & & & & & \\
~ & & & & & & & & &
\end{ytableau} } & & &
     & &
{\tiny \begin{ytableau}
*(blue!20) & *(red) & *(red)& *(red) & *(red)&*(red)&*(red)&*(red)&*(red)&*(red)\\
~ & *(blue!20) & & & *(red)&*(red)&*(red)&*(red)&*(red)&*(red)\\
~ & & & & &*(red)& *(red) & *(red) &*(red)&*(red)\\
~ & & & & & &*(red)&*(red)&*(red)&*(red)\\
~ & & & & *(blue!20) & & &*(red)&*(red)&*(red)\\
~ & & & & & & & & &*(red)\\
~ & & & & & & & & &*(red) \\
~ & & & & & & & *(blue!20) & & *(red)\\
~ & & & & & & & & & \\
~ & & & & & & & & & *(blue!20)
\end{ytableau} }\\[16.4mm]
\bpath_{\Psi}(2,8) = \{{\color{blue}2,5,8 } \}& & & %
                            & &  \uppath_{\Psi}(10) =
                                \{{\color{blue}10,8,5,2,1} \}
\end{array}\]
\end{example}

\begin{definition}
For a root ideal $\Psi$, we say there is

\emph{a wall in rows $r,r+1,\dots, r+d$} if rows $r, \dots, r+d$ of $\Psi$ have the same length,

\emph{a ceiling in columns $c,c+1, \dots, c+d$} if columns $c, \dots, c+d$ of $\Psi$ have the same length,

\emph{a mirror in rows} $r,r+1$ if $\Psi$ has removable roots $(r,c)$, $(r+1,c+1)$ for some $c > r+1$.
\end{definition}

\begin{example}
In Example~\ref{ex:downpath}, the root ideal $\Psi$ has
a ceiling in columns  $2,3,4$,  and in columns  $8,9$,
a wall in rows  $6,7,8$, and in rows  $9, 10$, and  a mirror in rows  $2,3$, in rows  $3, 4$, and in rows  $4, 5$.
\end{example}

\subsection{Mirror Lemmas}

\begin{lem}\label{base-case-mirror-lemma}\label{base-case-2-mirror-lemma}
Suppose a root ideal \(\Psi \subset \Delta^+_\ell \), a multiset \(M\) on
\([\ell]\), \(\mu \in \Z^\ell\), and
\(z \in [\ell-1]\) satisfy
  \begin{enumerate}
  \item \(\Psi\) has a ceiling in columns \(z,z+1\);
  \item \(\Psi\) has a wall in rows \(z,z+1\);
  \item \(\mu_z = \mu_{z+1}-1\).
  \end{enumerate}
If $m_M(z+1) = m_M(z)+1$, then \(K(\Psi;M;\mu) = 0\).
If $m_M(z)=m_M(z+1)$, then \(K(\Psi;M;\mu) = K(\Psi;M;\mu-\epsilon_{z+1})\),
\end{lem}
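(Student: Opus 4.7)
The plan is to deduce everything from the straightening identity in Lemma~\ref{katalan-straightening}, for $i = z$. The first task is to verify its hypothesis $s_z\Psi = \Psi$; the second task is to match the two terms using condition (c); and the third task is to handle the second case by reducing it to the first via a root expansion.

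First I would show $s_z \Psi = \Psi$. For any $(a,b) \in \Psi$ with $\{a,b\} \cap \{z, z+1\} = \emptyset$, the pair is fixed by $s_z$. For $(z, b) \in \Psi$ with $b > z+1$, the wall in rows $z, z+1$ gives $(z+1, b) \in \Psi$, and symmetrically for $(z+1, b)$. For $(a, z) \in \Psi$ with $a < z$, the ceiling in columns $z, z+1$ gives $(a, z+1) \in \Psi$, and symmetrically. The only potentially problematic case is $(a,b) = (z, z+1)$, but the wall condition forces row $z$ and row $z+1$ to be the same subset of $\{z+2, \ldots, \ell\}$ (since both are rightward intervals and row $z+1$ excludes $z+1$), so $(z, z+1) \notin \Psi$. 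Hence $s_z \Psi \subseteq \Psi$, and equality follows since $s_z^2 = \mathrm{id}$.

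Next, under the assumption $m_M(z+1) = m_M(z) + 1$, Lemma~\ref{katalan-straightening} applies with $i = z$ and yields
\[
K(\Psi;M;\mu) + K(\Psi;M;\, s_z \mu - \epsilon_z + \epsilon_{z+1}) = 0.
\]
Using $\mu_z = \mu_{z+1} - 1$, the $z$-th and $(z+1)$-th entries of $s_z\mu - \epsilon_z + \epsilon_{z+1}$ are $\mu_{z+1} - 1 = \mu_z$ and $\mu_z + 1 = \mu_{z+1}$ respectively, so $s_z\mu - \epsilon_z + \epsilon_{z+1} = \mu$. Therefore $2K(\Psi;M;\mu) = 0$, and since Katalan functions live in the torsion-free ring $\Lambda$, we get $K(\Psi;M;\mu) = 0$.

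For the second case $m_M(z) = m_M(z+1)$, I would apply Proposition~\ref{root-expansions}(d) with $y = z+1$ to obtain
\[
K(\Psi;M;\mu) = K(\Psi;M \dunion (z+1);\mu) + K(\Psi;M;\mu - \epsilon_{z+1}).
\]
The augmented multiset $M' = M \dunion (z+1)$ satisfies $m_{M'}(z+1) = m_M(z+1) + 1 = m_M(z) + 1 = m_{M'}(z) + 1$, so the first case applies and the first term vanishes, leaving the desired identity. The main obstacle is really just the bookkeeping in Step~1 — once $s_z \Psi = \Psi$ is established, condition (c) is precisely tuned so that the straightening relation collapses to $2K = 0$, which is what makes the proof so clean.
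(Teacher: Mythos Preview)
Your proof is correct and follows essentially the same approach as the paper: both reduce to Lemma~\ref{katalan-straightening} by first checking that conditions (a)--(b) give $s_z\Psi = \Psi$ and condition (c) gives $s_z\mu - \epsilon_z + \epsilon_{z+1} = \mu$, and then handle the second case via Proposition~\ref{root-expansions}(d) with $y = z+1$. The paper simply asserts the equivalence of (a)--(b) with $s_z\Psi = \Psi$, whereas you spell out the casework (including the check that $(z,z+1)\notin\Psi$), which is fine.
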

\begin{proof}
  Conditions (a) and (b) are equivalent to \(s_z \Psi = \Psi\) and
  condition (c) implies \mbox{\(\mu = s_z \mu - \epsilon_{z} +
  \epsilon_{z+1}\)}.
Thus, if \(m_M(z+1) = m_M(z)+1\),
the result follows from Lemma~\ref{katalan-straightening}.
If \(m_M(z+1) = m_M(z)\),
Lemma~\ref{root-expansions}(d) implies that
\[
    K(\Psi;M;\mu) = K(\Psi;M\dunion\{z+1\};\mu)+K(\Psi;M;\mu-\epsilon_{z+1})\,.
  \]
By the preceding case, $K(\Psi;M\dunion \{z+1\};\mu)$ vanishes.
\end{proof}
\begin{example}
For $z=2$, Lemma~\ref{base-case-mirror-lemma} applies in the following two situations:
\[
    \ytableausetup{boxsize=0.9em, centertableaux,nobaseline}
    \begin{ytableau}
      3 & *(red) & *(red)\bullet & *(red)\bullet & *(red)\bullet & *(red)\bullet \\
      {} & 2 & {} & {} & *(red)\bullet & *(red)\bullet \\
      {} & {} & 3 & {} & *(red) & *(red)\bullet \\
      {} & {} & {} & 2 & *(red) & *(red)\bullet \\
      {} & {} & {} & {} & 1 & {} \\
      {} & {} & {} & {} & {} & 1
    \end{ytableau}
    = 0
\qquad\qquad
    \begin{ytableau}
      3 & *(red)\bullet & *(red)\bullet & *(red)\bullet & *(red)\bullet & *(red)\bullet \\
      {} & 2 & {} & {} & *(red)\bullet & *(red)\bullet \\
      {} & {} & 3 & {} & *(red) & *(red)\bullet \\
      {} & {} & {} & 2 & *(red) & *(red)\bullet \\
      {} & {} & {} & {} & 1 & {} \\
      {} & {} & {} & {} & {} & 1
    \end{ytableau}
    =
    \begin{ytableau}
      3 & *(red)\bullet & *(red)\bullet & *(red)\bullet & *(red)\bullet & *(red)\bullet \\
      {} & 2 & {} & {} & *(red)\bullet & *(red)\bullet \\
      {} & {} & 2 & {} & *(red) & *(red)\bullet \\
      {} & {} & {} & 2 & *(red) & *(red)\bullet \\
      {} & {} & {} & {} & 1 & {} \\
      {} & {} & {} & {} & {} & 1
    \end{ytableau}
  \]
\end{example}

\begin{lem}[Mirror Lemma]\label{mirror-lemma}
  Let \(\Psi \subset \Delta^+_\ell\) be a root ideal, \(M\) a multiset on
  \([\ell]\), \(\mu \in \Z^\ell\), and \(1 \leq y \leq z < \ell\) be indices in the same bounce path of $\Psi$
satisfying
  \begin{enumerate}
  \item \(\Psi\) has a ceiling in columns \(y,y+1\);
  \item \(\Psi\) has a mirror in rows \(x,x+1\) for all \(x \in
    \path_\Psi(y, \up_\Psi(z))\);
  \item \(\Psi\) has a wall in rows \(z,z+1\);
  \item \(m_M(x+1) = m_M(x)+1\)
    for all \(x \in \path_\Psi(\down_\Psi(y), z)\);
  \item \(\mu_x = \mu_{x+1}\) for all \(x \in \path_\Psi(y,\up_\Psi(z))\);
  \item \(\mu_z = \mu_{z+1}-1\).
  \end{enumerate}
  If \(m_M(y+1) = m_M(y)+1\), then \(K(\Psi;M;\mu) = 0\).  If
  \(m_M(y+1) = m_M(y)\), then \(K(\Psi;M;\mu) = K(\Psi;M;\mu-\epsilon_{z+1})\).
\end{lem}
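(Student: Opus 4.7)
The plan is to induct on $|\path_\Psi(y,z)|$, with Lemma~\ref{base-case-mirror-lemma} providing the base case. When $y = z$, the path $\path_\Psi(y,\up_\Psi(z))$ and $\path_\Psi(\down_\Psi(y),z)$ are empty, so hypotheses (b), (d), (e) are vacuous; the remaining hypotheses (a), (c), (f) together with the assumption on $m_M(y)$ versus $m_M(y+1)$ reduce exactly to those of Lemma~\ref{base-case-mirror-lemma}, and the conclusion follows immediately.

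For the inductive step, suppose $y < z$ and set $y' = \down_\Psi(y)$. The mirror hypothesis (b) applied at $x = y$ supplies removable roots $\alpha = (y, y')$ and $\beta = (y+1, y'+1)$ of $\Psi$, with $y' \geq y + 2$. The strategy is to expand $K(\Psi; M; \mu)$ using Proposition~\ref{root-expansions} --- first on the removable root $\alpha$, then on $\beta$ (which remains removable after the first expansion thanks to the ceiling hypothesis (a)) --- together with multiset expansions via Proposition~\ref{root-expansions}(c),(d) to adjust entries at indices $y, y+1$. We then rearrange the resulting sum as a combination of Katalan functions $K(\Psi''; M'; \mu')$ in which $\Psi''$ carries a ceiling in columns $y', y'+1$ in place of the original ceiling in columns $y, y+1$, and the Mirror Lemma hypotheses hold with $y$ replaced by $y'$ (the mirror condition at rows $y', y'+1$, originally guaranteed by (b), now plays the role of the ``top mirror,'' while the wall and weight conditions at $z, z+1$ are untouched). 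Since $|\path_{\Psi''}(y', z)| = |\path_\Psi(y, z)| - 1$, the inductive hypothesis applies. Extra terms produced along the way either vanish by the inductive hypothesis applied to a shorter configuration with the ``multiset gap'' case $m_{M'}(y'+1) = m_{M'}(y')+1$, or cancel against one another via the alternating signs in Proposition~\ref{root-expansions}.

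The main obstacle is the combinatorial bookkeeping: at each root- and multiset-expansion step we must verify that (i) the ceilings, mirrors, and walls further down the bounce path persist in the modified root ideal $\Psi''$; (ii) the multiset values $m_{M'}(y')$ and $m_{M'}(y'+1)$ inherit the relation required for the inductive hypothesis at the new top index $y'$, drawing on hypothesis (d) with $x = y'$; and (iii) the signs produced by the various expansions align to give exactly the claimed dichotomy --- either $K(\Psi;M;\mu) = 0$ when $m_M(y+1) = m_M(y)+1$, or $K(\Psi;M;\mu) = K(\Psi;M;\mu - \epsilon_{z+1})$ when $m_M(y+1) = m_M(y)$. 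I expect (iii) to be the delicate step, since the nested expansions compound and the final cancellation pattern must be tracked precisely; the Sliding Lemma~\ref{sliding-lemma} may be invoked along the way to shorten the effective length $\ell$ whenever a ``tail'' of rows becomes inactive after an expansion.
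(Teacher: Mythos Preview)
Your induction scheme and base case are right, but the inductive step has a concrete gap. You propose to remove both mirror roots $\alpha=(y,y')$ and $\beta=(y+1,y'+1)$ and claim the resulting root ideal $\Psi''$ has a ceiling in columns $y',y'+1$. It does not: columns $y'$ and $y'+1$ of $\Psi$ have lengths $y$ and $y+1$ (that is where the removable roots sit), so after deleting $\alpha$ and $\beta$ the lengths become $y-1$ and $y$, which are unequal. Without that ceiling you cannot feed $\Psi''$ back into the inductive hypothesis at the new top index $y'$. (The side remark that $\beta$ ``remains removable thanks to the ceiling hypothesis (a)'' is also off: the ceiling in columns $y,y+1$ says nothing about column $y'+1$; $\beta$ is removable simply because $(y+2,y'+1)$ and $(y+1,y')$ lie outside $\Psi$.) The reference to the Sliding Lemma is a red herring here---it concerns the last column $\ell$ and plays no role.

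The paper runs the induction in the opposite direction, working at the \emph{bottom} of the bounce path rather than the top. Condition~(b) at $x=\up_\Psi(z)$ makes $\beta=(\up_\Psi(z)+1,z)$ \emph{addable} to $\Psi$, and adding it simultaneously creates a ceiling in columns $z,z+1$ and a wall in rows $\up_\Psi(z),\up_\Psi(z)+1$. In the expansion
\[
K(\Psi;M;\mu)=K(\Psi\cup\beta;M;\mu)-K(\Psi\cup\beta;M;\mu+\eroot{\beta}),
\]
the first term vanishes by Lemma~\ref{base-case-mirror-lemma} (ceiling and wall at $z,z+1$, together with (d) and (f)); the second term satisfies the full Mirror Lemma hypotheses with $\up_\Psi(z)$ in the role of $z$, so induction applies. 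In the case $m_M(y+1)=m_M(y)$ one finishes with Lemma~\ref{katalan-straightening} at $i=z$ and a removable-root expansion to return from $\Psi\cup\beta$ to $\Psi$. The point is that a single \emph{addition} at the bottom manufactures both the ceiling and the wall needed, whereas your two \emph{removals} at the top destroy the very ceiling you need.
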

\begin{proof}
We proceed by induction on \(z-y\), with Lemma~\ref{base-case-mirror-lemma} giving the base case  \(z=y\).
Assume \(z > y\).  Condition (b) implies that \(\up_\Psi(z+1)=\up_\Psi(z)+1\) and
thus the root \(\beta = (\up_\Psi(z+1),z)\) is addable to $\Psi$.  Proposition~\ref{root-expansions}(a) thus
implies that $K(\Psi; M; \mu) = K(\Psi \union \beta; M; \mu) - K(\Psi \union \beta; M; \mu+\eroot{\beta})$.
The root ideal \(\Psi \union \beta\) has a ceiling in columns \(z,z+1\)
and so   \(K(\Psi \union \beta; M; \mu)=0\) by Lemma~\ref{base-case-mirror-lemma}.
Therefore,
\[
 K(\Psi; M; \mu) = - K(\Psi \union \beta; M; \mu+\eroot{\beta})\,.
\]
Because there is a wall in rows \(\up_\Psi(z), \up_\Psi(z+1)\) of the root ideal \(\Psi \union \beta\),
$K(\Psi \union \beta; M; \mu+\eroot{\beta})$ can be addressed by induction:
when \(m_M(y+1) = m_M(y)+1\),
$K(\Psi \union \beta; M; \mu+\eroot{\beta})=0$ implies the vanishing of $K(\Psi; M; \mu)$,
and otherwise, $K(\Psi \union \beta; M; \mu+\eroot{\beta})=
K(\Psi \union \beta; M; \mu+\eroot{\beta}-\epsilon_{\up_\Psi(z)+1}) = K(\Psi \union \beta; M; \mu-\epsilon_z) $
gives
\(
K(\Psi; M; \mu) =
-K(\Psi \union \beta; M; \mu-\epsilon_z)\,.
\)
We then use Lemma~\ref{katalan-straightening} with  $i=z$ to find
\[
K(\Psi; M; \mu) =
-K(\Psi \union \beta; M; \mu-\epsilon_z)=
 K(\Psi \union \beta; M; \mu-\epsilon_{z+1})\,.
\]
Now expand the right hand side
on the removable root \(\beta \in \Psi\union\beta\)
with Proposition~\ref{root-expansions}(b) to obtain
\[
K(\Psi; M; \mu) =
    K(\Psi \union \beta; M; \mu-\epsilon_{z+1}) = K(\Psi;M;\mu-\epsilon_{z+1})+K(\Psi\union\beta;M;\mu-\epsilon_{z+1}+\eroot{\beta})\,.
  \]
Finally, \(K(\Psi\union\beta;M;\mu-\epsilon_{z+1}+\eroot{\beta})\)
vanishes by Lemma~\ref{base-case-mirror-lemma} since
 \(\Psi \union \beta\) has a wall in rows \(z,z+1\)
  and a ceiling in columns \(z,z+1\) and \(\mu-\epsilon_{z+1}+\eroot\beta\) satisfies the
necessary conditions.
\end{proof}

\begin{lem}\label{baby-staircase-lemma}
Suppose a root ideal \(\Psi \subset \Delta_{\ell}^+\), multiset \(M\) on \([\ell]\),
\(\gamma \in \Z^\ell\), and \(j \in [\ell]\) satisfy
  \begin{enumerate}
\item $\Psi$ has a removable root $(i,j)$ in column $j$;
  \item \(\Psi\) has a ceiling in columns \(j,j+1\) and a wall in rows \(j,j+1\);
  \item \(m_M(j+1) = m_M(j)+1\);
  \item \(\gamma_j = \gamma_{j+1}\).
  \end{enumerate}
  Then, \( K(\Psi;M;\gamma) = K(\Psi \setminus (i,j);M;\gamma)\).
\end{lem}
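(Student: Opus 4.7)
The plan is to apply Proposition~\ref{root-expansions}(b) to the removable root $\alpha=(i,j)$ of $\Psi$ and then show that the extra term produced by this expansion vanishes as a direct consequence of Lemma~\ref{base-case-mirror-lemma}.

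More precisely, expanding on the removable root $(i,j)$ yields
\[
K(\Psi;M;\gamma) \;=\; K(\Psi\setminus(i,j);M;\gamma) \;+\; K(\Psi;M;\gamma+\eroot{(i,j)})\,,
\]
so it suffices to prove that $K(\Psi;M;\gamma+\eroot{(i,j)})=0$. Set $\mu:=\gamma+\eroot{(i,j)}=\gamma+\epsilon_i-\epsilon_j$. Since $i<j$, the $(j+1)$-st coordinate of $\mu$ equals $\gamma_{j+1}$, while the $j$-th coordinate equals $\gamma_j-1$. Hypothesis (d) then gives $\mu_j=\mu_{j+1}-1$.

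At this point all of the hypotheses of Lemma~\ref{base-case-mirror-lemma} are in place with $z=j$: assumption (b) provides the required ceiling in columns $j,j+1$ and wall in rows $j,j+1$; the relation $\mu_j=\mu_{j+1}-1$ just established gives the third condition; and assumption (c) gives $m_M(j+1)=m_M(j)+1$. Therefore $K(\Psi;M;\mu)=0$ by the first (vanishing) case of Lemma~\ref{base-case-mirror-lemma}, which completes the proof.

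I do not anticipate any real obstacle: the statement is essentially a ``removable root expansion plus mirror-vanishing'' consequence, and the key technical inputs (Proposition~\ref{root-expansions} and Lemma~\ref{base-case-mirror-lemma}) are already established. The only small point to be careful about is verifying that shifting $\gamma$ by $\epsilon_i-\epsilon_j$ preserves the parity hypothesis $m_M(j+1)=m_M(j)+1$ (it does, since $M$ is untouched) and that $i\neq j+1$ (automatic because $i<j$), so that $\mu_{j+1}=\gamma_{j+1}$.
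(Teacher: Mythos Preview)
Your proof is correct and follows exactly the same approach as the paper: expand on the removable root $(i,j)$ via Proposition~\ref{root-expansions}(b) and kill the extra term using the vanishing case of Lemma~\ref{base-case-mirror-lemma} with $z=j$. Your additional remarks verifying $\mu_{j+1}=\gamma_{j+1}$ and that $M$ is unchanged are fine sanity checks but not strictly needed.
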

\begin{proof}
A root expansion on the removable root \((i,j)\) with
Proposition~\ref{root-expansions} gives \[
    K(\Psi;M;\gamma) = K(\Psi \setminus (i,j);M; \gamma)
    + K(\Psi;M; \gamma+\epsilon_{i}-\epsilon_{j})\,,
  \]
and the second summand vanishes by Lemma~\ref{base-case-mirror-lemma}
with \(z=j\).
\end{proof}

\begin{lem}\label{staircase-lemma}
  Suppose a root ideal \(\Psi \subset \Delta_{\ell}^+\), multiset \(M\) on \([\ell]\),
  \(\gamma \in \Z^\ell\), and \(j \in [\ell]\) satisfy
  \begin{enumerate}
  \item \(j \in M\);
  \item \(\Psi\) has a ceiling in columns \(j,j+1\) and a wall in rows \(j,j+1\);
\item \(m_M(j+1) = m_M(j)\);
  \item \(\gamma_j = \gamma_{j+1}\).
  \end{enumerate}
  Then, \(K(\Psi;M;\gamma) = K(\Psi;M \setminus j;\gamma)\,.  \)
If, in addition, $\Psi$ has a removable root \((i,j)\) in column  $j$, then
  \(
    K(\Psi;M;\gamma) = K(\Psi \setminus (i,j); M
    \setminus j;\gamma)\,.
  \)
\end{lem}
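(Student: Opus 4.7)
The plan is to reduce both parts to lemmas that have just been established, principally the base case Lemma \ref{base-case-mirror-lemma} and, for the second part, Lemma \ref{baby-staircase-lemma}.

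First I would strip the element $j$ from the multiset using Proposition \ref{root-expansions}(c), which gives
\[
K(\Psi;M;\gamma) \;=\; K(\Psi; M \setminus j; \gamma) \;-\; K(\Psi; M \setminus j; \gamma - \epsilon_{j}).
\]
The task is then to show that the second term on the right vanishes. For this I would verify that the hypotheses of Lemma \ref{base-case-mirror-lemma} hold for the triple $(\Psi,\, M\setminus j,\, \gamma-\epsilon_j)$ with $z=j$: the ceiling in columns $j,j+1$ and wall in rows $j,j+1$ are given by hypothesis (b); the weight condition $(\gamma-\epsilon_j)_j = (\gamma-\epsilon_j)_{j+1}-1$ follows immediately from hypothesis (d); and the multiplicity condition is exactly where hypothesis (a) combined with (c) becomes decisive, since removing one copy of $j$ from $M$ yields $m_{M\setminus j}(j+1) = m_M(j+1) = m_M(j) = m_{M\setminus j}(j)+1$. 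Lemma \ref{base-case-mirror-lemma} then gives $K(\Psi; M\setminus j; \gamma-\epsilon_j)=0$, which yields the first claim $K(\Psi;M;\gamma)=K(\Psi;M\setminus j;\gamma)$.

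For the ``In addition'' clause, I would apply Lemma \ref{baby-staircase-lemma} directly to $K(\Psi; M\setminus j;\gamma)$. Its four hypotheses are readily checked: the removable root $(i,j)$ is given, the ceiling/wall condition is (b), the weight condition is (d), and the required $m_{M\setminus j}(j+1)=m_{M\setminus j}(j)+1$ was just noted above. Chaining this with the first claim gives $K(\Psi;M;\gamma)=K(\Psi\setminus (i,j); M\setminus j;\gamma)$.

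I do not anticipate a genuine obstacle: the whole point is that Lemma \ref{base-case-mirror-lemma} was formulated precisely in both of its cases (multiplicity differing by one versus equal multiplicities) to handle exactly the two situations that arise when expanding across column $j$. The only delicate bookkeeping is tracking how $m_M(j)$ and $m_M(j+1)$ change when $j$ is removed from $M$, which converts the ``equal multiplicities'' hypothesis (c) into the ``off by one'' hypothesis needed for the vanishing statement of the base-case mirror lemma.
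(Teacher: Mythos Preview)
Your proof is correct and follows essentially the same approach as the paper: expand on $j\in M$ via Proposition~\ref{root-expansions}(c), kill the error term with Lemma~\ref{base-case-mirror-lemma}, and then invoke Lemma~\ref{baby-staircase-lemma} for the second claim. Your verification of the multiplicity bookkeeping is more explicit than the paper's, but the argument is identical.
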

\begin{proof}
We expand on $j\in M$
with Proposition~\ref{root-expansions} to obtain
\[
    K(\Psi; M; \gamma) = K(\Psi; M \setminus j;
    \gamma) - K(\Psi; M \setminus j;
    \gamma - \epsilon_j)
\,,
  \]
and note that
  \(K(\Psi; M \setminus j; \gamma - \epsilon_{j})=0\)
by Lemma~\ref{base-case-mirror-lemma}.
    If, in addition, \((i,j)\) is removable from \(\Psi\),
the second equality holds since
    \(K(\Psi; M \setminus j;\gamma)\) satisfies the
    conditions of Lemma~\ref{baby-staircase-lemma}.
  \end{proof}
  \begin{example}
  By the first equality of Lemma~\ref{staircase-lemma},  \[
    \ytableausetup{nobaseline}
    \begin{ytableau}
      4 & {} & *(red)\bullet & *(red)\bullet \\
      {} & 3 & *(red)\bullet & *(red)\bullet \\
      {} & {} & 2 & {} \\
      {} & {} & {} & 2
    \end{ytableau}
    =
    \begin{ytableau}
      4 & {} & *(red)\bullet & *(red)\bullet \\
      {} & 3 & *(red) & *(red)\bullet \\
      {} & {} & 2 & {} \\
      {} & {} & {} & 2
    \end{ytableau}
  \]
  By Lemma~\ref{baby-staircase-lemma},  \[
    \ytableausetup{nobaseline}
    \begin{ytableau}
      4 & {} & *(red)\bullet & *(red)\bullet \\
      {} & 3 & *(red) & *(red)\bullet \\
      {} & {} & 2 & {} \\
      {} & {} & {} & 2
    \end{ytableau}
    =
    \begin{ytableau}
      4 & {} & *(red)\bullet & *(red)\bullet \\
      {} & 3 & {} & *(red)\bullet \\
      {} & {} & 2 & {} \\
      {} & {} & {} & 2
    \end{ytableau}
  \]
  Combining both these equalities is an application of the second
  equality of Lemma~\ref{staircase-lemma}.
\end{example}

  \begin{lem}[Mirror Straightening Lemma]\label{kk-schur-straightening}
  Let \(\Psi \subset \Delta^+_\ell\) be a root ideal, \(M\) a multiset on
  \([\ell]\), and \(\mu \in \Z^\ell\).
Let \(1 \leq y \leq z < \ell\) be indices in the same bounce path of $\Psi$ satisfying
  \begin{enumerate}
  \item \(m_M(y) = m_M(y+1)\);
  \item \(\Psi\) has an addable root \(\alpha = (\up_\Psi(y+1),y)\)
    and a removable root \(\beta = (\up_\Psi(y+1),y+1)\);
  \item \(\Psi\) has a mirror in rows \(x,x+1\) for all \(x \in
    \path_\Psi(y, \up_\Psi(z))\); \label{psi-mirror}
  \item \(\Psi\) has a wall in rows \(z,z+1\);
  \item \(m_M(x+1) = m_M(x)+1\)
    for all \(x \in \path_\Psi(\down_\Psi(y), z)\);
  \item \(\mu_x = \mu_{x+1}\) for all \(x \in \path_\Psi(y,\up_\Psi(z))\), and \(\mu_z = \mu_{z+1}-1\).
  \end{enumerate}
  Then, \[
    K(\Psi;M;\mu) = K(\Psi \union \alpha;M \dunion
    (y+1);\mu+\epsilon_{\up_\Psi(y+1)}-\epsilon_{z+1}) +
    K(\Psi;M;\mu-\epsilon_{z+1}) \,.
  \]
\end{lem}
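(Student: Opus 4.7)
The plan is to prove the lemma by induction on $z - y$, following the structure of the proof of Lemma~\ref{mirror-lemma}; throughout I write $u = \up_\Psi(y+1)$, so that $\eroot{\alpha} = \epsilon_u - \epsilon_y$.

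For the base case $y = z$, I first apply Proposition~\ref{root-expansions}(d) at position $y+1$ to peel off $K(\Psi;M;\mu - \epsilon_{y+1})$, reducing the claim to the identity $K(\Psi;M \dunion (y+1);\mu) = K(\Psi \cup \alpha;M \dunion (y+1);\mu + \epsilon_u - \epsilon_{y+1})$. Applying Proposition~\ref{root-expansions}(a) on $\alpha$ to the left side produces two terms. The first vanishes by Lemma~\ref{base-case-mirror-lemma}: condition (b) provides a ceiling in columns $y,y+1$ of $\Psi \cup \alpha$, condition (d) with $z = y$ provides a wall in rows $y,y+1$ (undisturbed by $\alpha$ since $u < y$), and $m_{M \dunion (y+1)}(y+1) = m_M(y)+1$ together with $\mu_y = \mu_{y+1}-1$ from (a), (f) triggers the vanishing case. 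For the second term, the ceiling-plus-wall structure gives $s_y(\Psi \cup \alpha) = \Psi \cup \alpha$, so Lemma~\ref{katalan-straightening} applies; a short computation using $\mu_y = \mu_{y+1}-1$ shows $s_y(\mu + \eroot{\alpha}) - \epsilon_y + \epsilon_{y+1} = \mu + \epsilon_u - \epsilon_{y+1}$, closing the base case.

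For the inductive step $y < z$, condition (c) at $x = \up_\Psi(z)$ forces $\up_\Psi(z+1) = \up_\Psi(z)+1$ and tells us that $\beta' := (\up_\Psi(z)+1, z)$ is addable to $\Psi$. Applying Proposition~\ref{root-expansions}(a) on $\beta'$ and then Lemma~\ref{base-case-mirror-lemma} (since $\Psi \cup \beta'$ has ceiling cols $z,z+1$ and wall rows $z,z+1$, while (e), (f) provide $m_M(z+1) = m_M(z)+1$ and $\mu_z = \mu_{z+1}-1$), I obtain $K(\Psi;M;\mu) = -K(\Psi \cup \beta';M;\mu + \eroot{\beta'})$. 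I then invoke the inductive hypothesis with parameters $(\Psi \cup \beta', \mu + \eroot{\beta'}, y, \up_\Psi(z))$. The critical transfer facts are: $\beta'$ sits strictly below and to the right of the mirror-bounce structure along $\path_\Psi(y, \up_\Psi^2(z))$, leaving (b), (c), (e), and the equality part of (f) intact; adding $\beta'$ equalises the leftmost columns of rows $\up_\Psi(z), \up_\Psi(z)+1$, creating the new wall demanded by (d); and the shift $+\epsilon_{\up_\Psi(z)+1}$ promotes the original equality $\mu_{\up_\Psi(z)} = \mu_{\up_\Psi(z)+1}$ (from (f)) into the strict gap required at the new $z'$. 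The $\pm\epsilon_{\up_\Psi(z)+1}$ contributions telescope, and the hypothesis gives $K(\Psi;M;\mu) = -K(\Psi \cup \beta' \cup \alpha;M \dunion (y+1);\mu + \epsilon_u - \epsilon_z) - K(\Psi \cup \beta';M;\mu - \epsilon_z)$.

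To finish, I apply Lemma~\ref{katalan-straightening} with $i = z$ to each summand (both $\Psi \cup \beta'$ and $\Psi \cup \beta' \cup \alpha$ are $s_z$-invariant by ceiling cols $z,z+1$ plus wall rows $z,z+1$, and $m_M(z+1) = m_M(z)+1$ holds): the $s_z$-computation analogous to the base case converts each $-\epsilon_z$ shift into a $-\epsilon_{z+1}$ shift with a sign flip, absorbing both minus signs. Then Proposition~\ref{root-expansions}(b), applied to the removable $\beta'$ inside each summand, replaces $\Psi \cup \beta'$ by $\Psi$ and $\Psi \cup \beta' \cup \alpha$ by $\Psi \cup \alpha$; the two correction terms this produces carry weight shift $\eroot{\beta'} - \epsilon_{z+1}$, still satisfying the $(\cdot)_z - (\cdot)_{z+1} = -1$ gap, so they vanish by the vanishing case of Lemma~\ref{base-case-mirror-lemma}. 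The main obstacle will be the bookkeeping in the inductive step: verifying that $\beta'$'s placement in row $\up_\Psi(z)+1$ does not disturb the removable roots witnessing the mirrors along $\path_\Psi(y, \up_\Psi^2(z))$ or the bounce path from $y$ to $\up_\Psi^2(z)$, so that all six conditions of the Mirror Straightening Lemma transfer cleanly to $(\Psi \cup \beta', \mu + \eroot{\beta'}, y, \up_\Psi(z))$.
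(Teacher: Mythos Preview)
Your proposal is correct and follows essentially the same approach as the paper: induction on $z-y$, with an identical base case, and in the inductive step the same opening move of expanding on the addable root $\beta' = (\up_\Psi(z)+1,z)$ and killing the first term via Lemma~\ref{base-case-mirror-lemma}. The only difference is the order of the remaining operations: the paper straightens at $z$ \emph{before} applying the inductive hypothesis (obtaining $K(\Psi\cup\beta';M;\mu+\epsilon_{w+1}-\epsilon_{z+1})$ and then inducting) and removes $\beta'$ at the end via Lemma~\ref{baby-staircase-lemma}, whereas you induct first on the unstraightened term $K(\Psi\cup\beta';M;\mu+\eroot{\beta'})$, then straighten both resulting summands at $z$, and finally remove $\beta'$ by hand using Proposition~\ref{root-expansions}(b) together with the vanishing case of Lemma~\ref{base-case-mirror-lemma}; your final removal step is exactly the content of the proof of Lemma~\ref{baby-staircase-lemma}, so the two arguments are the same up to this harmless reordering.
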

\begin{proof}
First consider the case $z=y$.  We have
$K(\Psi;M;\mu) = K(\Psi;M \dunion (y+1);\mu)+K(\Psi;M;\mu-\epsilon_{z+1})$
by Proposition~\ref{root-expansions}(d), and must prove
that
\begin{equation}
\label{l1}
K(\Psi;M \dunion (y+1);\mu)=K(\Psi \union \alpha; M \dunion (y+1); \mu+\epsilon_{\up_\Psi(z+1)}-\epsilon_{z+1})\,.
\end{equation}
Since \(\alpha = (\up_\Psi(z+1),z) \) is addable to $\Psi$, we expand with Proposition~\ref{root-expansions}
to obtain
\[
K(\Psi;M \dunion (y+1);\mu)
=
K(\Psi \union \alpha; M \dunion (y+1); \mu)
 - K(\Psi \union \alpha; M \dunion (y+1); \mu+\eroot{\alpha})\,.
\]
Conditions (b) and (d) imply that $\Psi\union\alpha$ has a ceiling in columns $y,y+1$
and a wall in rows  $y,y+1$,
and (a) gives that $M\dunion (y+1)$ has one more occurrence of $y+1$ than $y$.
Therefore, since $\mu_z=\mu_{z+1}-1$, Lemma~\ref{katalan-straightening} with $i=y=z$ applies
and straightens the term
\[
    -K(\Psi \union \alpha; M \dunion (y+1); \mu+\eroot{\alpha}) =
    K(\Psi \union \alpha; M \dunion (y+1); \mu+\epsilon_{\up_\Psi(z+1)}-\epsilon_{z+1})\,.
  \]
For the same reasons,
Lemma~\ref{base-case-mirror-lemma} applies to the other term, giving
\(K(\Psi \union \alpha; M \dunion (y+1); \mu)=0\). Thus~\eqref{l1} is proved.

Proceed by induction for $z-y>0$.
Given $\Psi$ has a mirror in rows $w=\up_\Psi(z)$ and $w+1$,
the root \(\gamma = (w+1,z)\) is addable to  $\Psi$ and expanding on it using Proposition~\ref{root-expansions} yields
\[
    K(\Psi;M;\mu) = K(\Psi \union \gamma; M; \mu)-K(\Psi \union \gamma; M; \mu+\eroot{\gamma})\,.
  \]
Since $\Psi \union \gamma$ has a ceiling in columns $z,z+1$, with conditions (d) and (e),
Lemma~\ref{katalan-straightening} straightens the term
\[
    -K(\Psi \union \gamma; M; \mu+\eroot{\gamma}) = K(\Psi \union \gamma; M; \mu+\epsilon_{w+1}-\epsilon_{z+1}) \,.
\]
The same conditions imply that $K(\Psi \union \gamma; M; \mu)=0$ by
Lemma~\ref{base-case-mirror-lemma}.  Therefore,
\[
    K(\Psi;M;\mu) = K(\Psi \union \gamma; M; \mu+\epsilon_{w+1}-\epsilon_{z+1}) \,.
  \]
Since $\Psi\union\gamma$ has a wall in rows $w$ and $w+1$, and
$\nu=\mu+\epsilon_{w+1}-\epsilon_{z+1}$ satisifies $\nu_{w}=\nu_{w+1}-1$,
we can apply the induction hypothesis with \(z=w\) to the right hand side and
obtain
\[
    K(\Psi;M;\mu) =
    K(\Psi \union \{\gamma, \alpha\}; M \dunion (y+1);
    \mu+\epsilon_{\up_\Psi(y+1)} - \epsilon_{z+1}) + K(\Psi \union
    \gamma; M; \mu-\epsilon_{z+1})\,.
  \]
Lemma~\ref{baby-staircase-lemma} enables us to remove \(\gamma\) from both terms, proving the claim.
\end{proof}

\begin{example}
  The following is an example of an application
  of Lemma~\ref{kk-schur-straightening} with \(y=2,z=5\). \[
    \ytableausetup{boxsize=0.9em, centertableaux, nobaseline}
    \begin{ytableau}
  5 & {} &*(red) & *(red)\bullet & *(red)\bullet & *(red)\bullet \\
  {} & 4 & {} & {} & *(red) & *(red)\bullet \\
  {} & {} & 4 & {} & {} & *(red) \\
  {} & {} & {} & 4 & {} & {} \\
  {} & {} & {} & {} & 3 & {}\\
  {} & {} & {} & {} & {} & 4
\end{ytableau}
 = \begin{ytableau}
  6 & *(red) &*(red)\bullet & *(red)\bullet & *(red)\bullet & *(red)\bullet \\
  {} & 4 & {} & {} & *(red) & *(red)\bullet \\
  {} & {} & 4 & {} & {} & *(red) \\
  {} & {} & {} & 4 & {} & {} \\
  {} & {} & {} & {} & 3 & {} \\
  {} & {} & {} & {} & {} & 3
\end{ytableau}
+
\begin{ytableau}
  5 & {} &*(red) & *(red)\bullet & *(red)\bullet & *(red)\bullet \\
  {} & 4 & {} & {} & *(red) & *(red)\bullet\\
  {} & {} & 4 & {} & {} & *(red) \\
  {} & {} & {} & 4 & {} & {} \\
  {} & {} & {} & {} & 3 & {} \\
  {} & {} & {} & {} & {} & 3 \\
\end{ytableau}
  \]
\end{example}

For $1\leq x<y\leq z\le \ell$, define the  {\it diagonal} $D^z_{x,y} = \{(i,j) \st j-i=y-x \,,\, y\leq j\leq z\} \subset \Delta^+_\ell$.
\begin{example}\label{ex:diagonals}
In the following, \(D_{3,4}^6\) is the light blue (removable) diagonal
and \(D_{2,4}^6\) is depicted in dark blue.
\[ \ytableausetup{boxsize=0.8em}
    \begin{ytableau}
      \, &*(red) & *(red)& *(red)& *(red)&*(red) &*(red) &*(red)&*(red)\\
      \, & & & *(blue)& *(red)&*(red) &*(red) &*(red)&*(red)\\
      &  & & *(blue!20) & *(blue) &*(red) &*(red)&*(red)&*(red)\\
      & &  & & *(blue!20)  & *(blue) &*(red)&*(red)&*(red)\\
      & & &  & & *(blue!20) & *(red)&*(red)&*(red)\\
      & & & &  & &&*(red) &*(red)\\
      & & & & &  &&*(red) &*(red)\\
      & & & & & &  &&\\
      & & & & & &  &&\\
    \end{ytableau}
  \]
\end{example}

\begin{lem}[Diagonal Removal Lemma]\label{big-staircase-lemma}
  Let \(\Psi \subset \Delta_\ell^+\) be a root ideal, \(M\) a multiset on
  \([\ell]\), \(\gamma \in \Z^\ell\), and integers $1 \leq x<y\leq
  z\leq\ell$ be such that
  \begin{enumerate}
  \item \(\Psi\) has a ceiling in columns \(z-1,z\) and every root of
    \(D_{x,y}^{z-1} \subset \Psi\) is removable from \(\Psi\);
  \item \(L(D_{x,y}^{z-1}) \subset M\) and \(m_M(z)=m_M(z-1) =
    m_M(z-2) + 1 = \cdots = m_M(y)+z-1-y\);
  \item \(\Psi\) has a wall in rows \(y,y+1, \dots, z\);
  \item \(\gamma_y = \cdots = \gamma_z\).
  \end{enumerate}
  Then, \[
    K(\Psi;M;\gamma) = K(\Psi';M';\gamma)
  \]
  where \(\Psi' = \Psi \setminus D_{x,y}^{z-1}\) and \(M' = M
  \setminus L(D_{x,y}^{z-1})\).
\end{lem}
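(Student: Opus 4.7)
The plan is to prove the claim by induction on \(d = z - y\). The base case \(d = 0\) is trivial, since \(D_{x,y}^{z-1}\) (and hence \(L(D_{x,y}^{z-1})\)) is empty. For the inductive step with \(d \geq 1\), I peel off the rightmost root of the diagonal using Lemma~\ref{staircase-lemma} at \(j = z - 1\), then apply the inductive hypothesis to the result.

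At \(j = z - 1\), every hypothesis of Lemma~\ref{staircase-lemma} is directly supplied: \(z - 1 \in M\) because \(L(D_{x,y}^{z-1}) \subset M\); the ceiling in columns \(z-1, z\) and wall in rows \(z-1, z\) come from (a) and (c); the equality \(m_M(z) = m_M(z-1)\) is the first equality in (b); \(\gamma_{z-1} = \gamma_z\) from (d); and the removable root \((r, z-1)\) with \(r := x + z - 1 - y\) lies in \(D_{x,y}^{z-1} \subset \Psi\) by (a). Applying the lemma yields
\[ K(\Psi; M; \gamma) = K(\Psi_1; M_1; \gamma), \quad \text{where } \Psi_1 := \Psi \setminus (r, z-1),\ M_1 := M \setminus (z-1). \]

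The crux of the argument is a combinatorial claim: \(r \leq y - 1\). Note first that \(r \leq z - 2\), since \(x \leq y - 1\). Suppose for contradiction that \(r \geq y\); then \(r, r+1 \in [y, z]\), so the wall in rows \(y, \ldots, z\) forces rows \(r\) and \(r+1\) of \(\Psi\) to begin at the same column. But the removability of \((r, z-1)\) requires \((r, z-2) \notin \Psi\) and \((r+1, z-1) \notin \Psi\), hence row \(r\) begins at column \(z-1\) while row \(r+1\) begins strictly to the right, a contradiction. With \(r < y\) in hand, the removed root \((r, z-1)\) lies outside the wall band \([y,z]\), so the wall in those rows is preserved in \(\Psi_1\) (yielding in particular the wall in rows \(y, \ldots, z-1\) needed below). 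Column \(z-1\) of \(\Psi_1\) now has length \(r - 1 = x + z - 2 - y\), matching column \(z - 2\)'s unchanged length, which gives the new ceiling in columns \(z-2, z-1\). The remaining diagonal \(D_{x,y}^{z-2}\) sits in columns \(y, \ldots, z-2\), so its roots are still removable from \(\Psi_1\); a direct check shows the multiplicity chain now reads \(m_{M_1}(z-1) = m_{M_1}(z-2) = m_{M_1}(z-3) + 1 = \cdots = m_{M_1}(y) + (z - 2 - y)\), as required. Invoking the induction hypothesis on \((\Psi_1, M_1, \gamma, x, y, z - 1)\) yields \(K(\Psi_1; M_1; \gamma) = K(\Psi'; M'; \gamma)\), completing the proof.

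The main obstacle is the inequality \(r \leq y - 1\). All other checks are essentially bookkeeping, but this bound is not stated outright; it emerges implicitly from the compatibility of the wall band, the ceiling in columns \(z-1, z\), and the removability of the diagonal's bottom-right root. Absent this bound, a single application of Lemma~\ref{staircase-lemma} could destroy the wall structure in rows \([y, z-1]\) and block the induction.
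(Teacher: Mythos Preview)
Your proof is correct and follows the same strategy as the paper: peel off the roots of the diagonal one at a time, starting from the rightmost, via repeated application of Lemma~\ref{staircase-lemma}. The paper phrases this as an explicit iteration rather than an induction and glosses over the verification that $r<y$, which you supply; your argument is if anything more careful on this point.
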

\begin{proof}
Let $\beta^0,\beta^1,\ldots,\beta^{z-y-1}$ be the roots of the diagonal $D_{x,y}^{z-1}$
from lowest to highest, i.e., $\beta^j=(a_j,b_j)$ with  $a_j=z+x-y-j-1$ and $b_j=z-j-1$.
Define $\Psi^{j+1}=\Psi^{j}\setminus \{\beta^{j}\}$ and $M^{j+1}=M^{j}\setminus \{b_{j}\}$,
starting with $\Psi^0=\Psi$ and  $M^0=M$; thus $\Psi^{z-y-1}=\Psi'$ and $M^{z-y-1}=M'$.
By condition (a) for $j=0$ and by construction for $j>0$,
$\beta^{j}$ is a removable root of $\Psi^{j}$, and $\Psi^{j}$ has a
ceiling in columns $b_{j},b_{j}+1$.
Similarly, (b) implies that  $b_{j}\in M^{j}$ and $m_{M^j}(b_j+1) =m_{M^j}(b_j)$.
Therefore, using also (c) and (d), we can repeatedly apply
Lemma~\ref{staircase-lemma} to obtain
  \[
K(\Psi;M;\gamma) = K(\Psi^{1};M^{1};\gamma) =  K(\Psi^{2};M^{2} ;\gamma) = \cdots = K(\Psi^{z-y-1};M^{z-y-1};\gamma)\,.
\qedhere
  \]
\end{proof}

\subsection{Proof of Proposition~\ref{prop:tg-longest-word}}\label{proof-of-tg-longest-word}
  From $\Inv(w_0w_0)=0^k$ we have
 $\zeta(w_0)'=\big(\binom{k}{2},\ldots,\binom{1}{2}\big)=\theta(w_0)'$, and thus
 $\theta(w_0)=\union_{i=1}^{k-1} (k-i)^i$.
 The proposition states that  $\prod_{i=1}^{k-1} g_{(k-i)^i} = \tg_{\theta(w_0)^{\omega_k}}^{(k)}$, but we will first prove
 \begin{equation}
 \label{eq:prod-is-tg}
 \prod_{i=1}^{k-1} g_{(k-i)^i} = \tg_{\theta(w_0)}^{(k)} \,.
 \end{equation}

  Consider that, by Proposition~\ref{prop:extremal} and Lemma~\ref{katalan-concatenation},\[
    \prod_{i=1}^{k-1} g_{(k-i)^i} = \prod_{i=i}^{k-1} K(\emptyset_i;\emptyset_i;(k-i)^i) =
    K( \rootconcat_{i=1}^{k-1} \emptyset_i; \rootconcat_{i=1}^{k-1} \emptyset_i
    ; \textstyle \union_{i=1}^{k-1}(k-i)^i)
  \]
  where \(\emptyset_i \subset \Delta_i^+\) denotes the empty root ideal of
  length \(i\) and \(\rootconcat_{i=1}^{k-1} \emptyset_i = \emptyset_1 \rootconcat
  \emptyset_2 \rootconcat \cdots \rootconcat \emptyset_{k-1}\). Set
  \(\gamma = \union_{i=1}^{k-1}(k-i)^i\).
  We now proceed iteratively on \(i=1,\ldots,k-1\) with
  \(\Psi^{i} :=   \Delta^{(k)}(\union_{j=1}^{i} (k-j)^j) \rootconcat
  (\rootconcat_{j=i+1}^{k-1} \emptyset_j)\).
  For fixed \(i\), let \(a = 1+2+\cdots+i =
  \binom{i+1}{2}\). Note that \(\Psi^{i}\) has
  a ceiling in columns \(a+1, \ldots
  a+i+1\), a wall in rows \(a+1,\ldots,a+i+1\), and \(\gamma_{a+1} = \cdots = \gamma_{a+i+1} = k-i-1\). Now, we can apply
  Diagonal Removal Lemma~\ref{big-staircase-lemma} to
  \(K(\Psi^{i};\Psi^{i};\gamma)\) iteratively
  with \(x=a-d\), \(y=a+1\), and
  \(z=a+1+d\)
  for \(0 \leq d < i\) to get, for \(D_d = D_{a-d,a+1}^{a+1+d}\) and
  \(\Psi^i_d := \Psi^i \setminus (D_0 \union \cdots \union D_d)\),
    \begin{equation}
      \label{eq:iterate-sc-longest-word}
    K(\Psi^{i};\Psi^{i};\gamma) = K(\Psi^i_0 ; \Psi^i_0; \gamma) =
    K(\Psi^i_1 ;\Psi^i_1;\gamma) = \cdots = K(\Psi^i_{i-1} ;\Psi^i_{i-1};\gamma)
   = K(\Psi^{i+1};\Psi^{i+1};\gamma)\,,
   \end{equation}
  where the last equality follows since \(\Psi^i \setminus (D_0 \union
  \cdots \union D_{i-1})\) has \(i\) nonroots in rows
  \(a-i+1,\ldots,a\) and is thus equal to \(\Psi^{i+1}\). Then,~\eqref{eq:prod-is-tg}
  follows by applying~\eqref{eq:iterate-sc-longest-word} iteratively since \(\Psi^1 =
  \rootconcat_{j=1}^k \emptyset_j\) and \(\Psi^{k-1} = \Delta^{k}(\gamma)\).

By~\cite{morse}*{\S8}, there is an involution
\(\Omega \from \sym_{(k)} \to \sym_{(k)}\) defined by
\(\Omega(h_r) = g_{1^r}\), and it
satisfies $\Omega(g_\mu^{(k)}) =  g_{\mu^{\omega_k}}^{(k)}$
for all  $\mu \in \Par^k$.  Thus by Theorem \ref{formulations-are-equal}, $\Omega(\tg_\mu^{(k)}) =  \tg_{\mu^{\omega_k}}^{(k)}$ as well.  Applying  $\Omega$ to both sides of
\eqref{eq:prod-is-tg} yields
\begin{equation}
\prod_{i=1}^{k-1} \Omega(g_{(k-i)^i}) = \tg_{\theta(w_0)^{\omega_k}} \,.
\end{equation}
The left side is in fact equal to
$\prod_{i=1}^{k-1} g_{((k-i)^i)^{\omega_k}} = \prod_{i=1}^{k-1} g_{(i)^{k-i}} =
\prod_{i=1}^{k-1} g_{(k-i)^i}$,
where we have used that  $g_\mu = g_\mu^{(k)}$ and  $\mu^{\omega_k} = \mu'$ for
any  $\mu$ contained in a  $k$-rectangle, by Corollary \ref{k-rectangle-bounded-gk} and
\cite{LMtaboncores}*{Remark 10}.  This completes the proof.

\section{Vertical Pieri Rule}

We now apply the mirror lemmas to the $k$-Schur Katalan
functions, \(\g_\lambda^{(k)}\).  The root ideal combinatorics
matches naturally with previously studied $(k+1)$-core combinatorics
for \(g_\lambda^{(k)}\).
We deduce a vertical Pieri rule for
the \(\g_\lambda^{(k)}\), which agrees with the known rule for the \(g_\lambda^{(k)}\).

\subsection{Pieri straightening}

Recall from \eqref{e Deltak def} that $\Delta^k(\mu) = \{(i,j) \in \Delta^+_{\ell} \mid  k-\mu_i + i < j\}$.
This was defined for $\mu \in \Par^k_\ell$, but the definition can be extended to any
$\mu \in \ZZ_{\le k}^\ell$ such that $\mu_i \ge \mu_{i+1}-1$ for all  $i \in [\ell-1]$.
Several useful properties are satisfied by these $k$-Schur root ideals, immediate from their construction, which will
be used throughout this section.

\begin{rmk}\label{re:ksri}
  Let \(\lambda \in \Par^k_m\), \(\Psi =
  \Delta^k(\lambda)\), and  $\lowers = \Delta^{k+1}(\lambda)$.
   Let  $z$ be the lowest nonempty row of  $\Psi$.
    \begin{enumerate}
  \item \label{ksri:no-walls} (Wall-free) For  $x \in [z]$, \(\Psi\) does not have a wall in
    rows \(x,x+1\).  Hence for all  $x \in [m-1]$,
    either \(\Psi\) has a ceiling in
    columns \(x,x+1\) or has removable roots
    \((y,x)\) and \((y+1,x+1)\). In the latter case, if \(y \neq x-1\),
    then \(\Psi\) has a mirror in rows \(y,y+1\).
  \item \label{ksri:mirror-implies-equal-parts}
  (Equal weight mirrors) For  $x \in [z-1]$, \(\Psi\) has a mirror in rows \(x,x+1\) if and only if
  \(\mu_x = \mu_{x+1} < k\).
\item\label{ksri:no-lowering-walls}
(Wall-free lowering ideal)
For \(x \in [m-1]\), $\up_\Psi(x)$ exists $\iff m_{L(\lowers)}(x)=m_{L(\lowers)}(x+1)-1$. Otherwise $m_{L(\lowers)}(x)=m_{L(\lowers)}(x+1)$.
\item\label{ksri:last-part-flexible}
  (Adjustable end) Let  $S \subset \ZZ_{\ge m+2}$ satisfying $\max(S)-\min(S) \le k-1$ if it is nonempty.
  Set $\mu = \lambda + \epsilon_S \in \ZZ^\ell$ for  $\ell = \max(S \cup \{m\})$.
  Then $\Delta^k(\mu) = \Delta^k((\lambda,0^{\ell-m}))$ and
  $\Delta^{k+1}(\mu) = \Delta^{k+1}((\lambda,0^{\ell-m}))$, hence (a)--(c) apply with
   data $\ell, \mu,\Delta^k(\mu), \Delta^{k+1}(\mu)$ in place of
  $m, \lambda, \Psi, \lowers$.
\end{enumerate}
\end{rmk}
Here and throughout the remainder of the paper, for $\lambda\in\Par_\ell^k$ and $\alpha\in\mathbb Z^j$ with $j\geq \ell$, we define $\lambda+\alpha=(\lambda,0^{j-\ell})+\alpha$.

\begin{figure}[h]
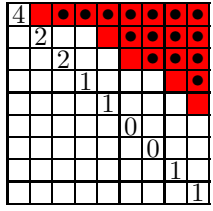

  \centering
    \[
    \ytableausetup{nobaseline}
      \begin{ytableau}
  4 & *(red) & *(red) \bullet & *(red)\bullet & *(red)\bullet & *(red)\bullet & *(red)\bullet & *(red)\bullet & *(red)\bullet \\
  {} & 2 & {} & {} & *(red) & *(red)\bullet & *(red)\bullet & *(red)\bullet & *(red)\bullet \\
  {} & {} & 2 & {} & {} & *(red) & *(red)\bullet & *(red)\bullet & *(red)\bullet \\
  {} & {} & {} & 1 & {} & {} & {} & *(red) & *(red)\bullet \\
  {} & {} & {} & {} & 1 & {} & {} & {} & *(red) \\
  {} & {} & {} & {} & {} & 0 & {} & {} & {} \\
  {} & {} & {} & {} & {} & {} & 0 & {} & {} \\
  {} & {} & {} & {} & {} & {} & {} & 1 & {} \\
  {} & {} & {} & {} & {} & {} & {} & {} & 1
\end{ytableau}
    \]
    \caption{\(K(\Psi;\lowers;\mu)\) with \(\mu = 422110011\),
    \(\Psi = \Delta^4(\mu)\) shown in red, and \(\lowers = \Delta^5(\mu)\)
    superimposed as \(\bullet\)'s as in Example~\ref{ex:katalan-conventions}.
    The nonzero row lengths of \(\Psi\) and \(\lowers\) decrease
    by at least one from top to bottom (illustrating (a) and (c)), and
    there are mirrors in rows $2,3$ and in rows  $4,5$ corresponding to  $\mu_2=\mu_3$ and
    $\mu_4=\mu_5$ (illustrating (b)). }
  \label{fig:k-schur-root-ideal}
\end{figure}

\begin{prop}[Pieri straightening]\label{pieri-straightening}
Let $\lambda\in\Par^k_m$ and $S\subset \ZZ_{\ge m+2}$ nonempty with $\max(S)-\min(S) \le k-1$.
Set
\(\mu = \lambda + \epsilon_{S}\),
\(\Psi= \Delta^{k}(\mu)\), \(M = L(\Delta^{k+1}(\mu))\),
and $j=\min(S)$. There holds
\begin{align}
\label{e pieri-straightening}
  K(\Psi;M \dunion S;\mu) =
  \begin{cases}
    K(\Delta^k(\nu);L(\Delta^{k+1}(\nu))\dunion(S \setminus j);\nu) & y=\top_\Psi(j-1) > \top_\Psi(j)\\
    -K(\Psi; M \dunion (S \setminus j); \mu-\epsilon_j) & \top_\Psi(j) > \top_\Psi(j-1)+1\\
    0 & \top_\Psi(j) = \top_\Psi(j-1)+1
  \end{cases}
\end{align}
where \(\nu := \mu+\epsilon_{\up_\Psi(y+1)}-\epsilon_j\) in the first case.
\end{prop}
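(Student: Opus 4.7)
The plan is to first use Proposition~\ref{root-expansions}(c) on the index $j\in S\subset M\dunion S$ to split
\[
K(\Psi; M \dunion S; \mu) \,=\, K(\Psi; M \dunion (S \setminus j); \mu) \,-\, K(\Psi; M \dunion (S \setminus j); \mu - \epsilon_j),
\]
and write $F$, $F'$ for the two summands. Two structural observations at position $j$ then feed every subsequent step. Since $j=\min(S)\ge m+2$ lies past the support of $\lambda$, one has $\mu_{j-1}=0$ and $\mu_j=1$, so $\mu_{j-1}=\mu_j-1$. And since $\max(S)-\min(S)\le k-1$, the length $\ell$ satisfies $\ell<k+j$, so rows $j-1$ and $j$ of $\Psi=\Delta^k(\mu)$ are both empty. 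Together these are exactly the $\mu_z=\mu_{z+1}-1$ and wall-in-rows-$(z,z+1)$ hypotheses (with $z=j-1$) common to the Mirror Lemmas.

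I then analyze $F$ by cases on $\top_\Psi(j-1)$ and $\top_\Psi(j)$. In Case~1 ($y=\top_\Psi(j-1)>\top_\Psi(j)$), apply the Mirror Straightening Lemma~\ref{kk-schur-straightening} with this $y$ and $z=j-1$. The chain of mirrors along $\path_\Psi(y,\up_\Psi(z))$ and equal-$\mu$ conditions along the same chain come from the equal-weight-mirrors dictionary Remark~\ref{re:ksri}(b); the multiset-jump hypotheses come from the $\up_\Psi$/lowering-multiset dictionary Remark~\ref{re:ksri}(c); and the existence of the addable-removable pair $\alpha=(\up_\Psi(y+1),y)$, $\beta=(\up_\Psi(y+1),y+1)$ follows because Case~1's inequality places $y+1$ in a strictly higher-topping bounce path than $y$, so $\up_\Psi(y+1)$ is defined. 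The lemma then yields
\[
F \,=\, K(\Psi\cup\alpha;\, M\dunion(S\setminus j)\dunion(y+1);\, \nu) \,+\, F',
\]
where $\nu=\mu+\epsilon_{\up_\Psi(y+1)}-\epsilon_j$; after cancelling $F'$ against the initial split we obtain the Case~1 formula modulo the identification $\Psi\cup\alpha=\Delta^k(\nu)$ and $M\dunion(y+1)=L(\Delta^{k+1}(\nu))$. This identification is a direct column-by-column check from the definition of $\Delta^k$: the single weight shift $\mu\mapsto\nu$ alters $\Delta^k(\cdot)$ precisely by the addition of $\alpha$ and alters $\Delta^{k+1}(\cdot)$ precisely by the creation of one new removable root in column $y+1$.

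Cases~2 and 3 (together covering $\top_\Psi(j)>\top_\Psi(j-1)$) are handled by applying Lemma~\ref{base-case-mirror-lemma} (when columns $j-1,j$ have a ceiling, equivalently $\top_\Psi(j)=j$) or the full Mirror Lemma~\ref{mirror-lemma} with $y=\up_\Psi(z)$ otherwise, in each instance to evaluate $F$. In Case~2 ($\top_\Psi(j)>\top_\Psi(j-1)+1$), $\top_\Psi(j)\ge\top_\Psi(j-1)+2$ together with $\top_\Psi(j)\le j$ forces $\top_\Psi(j-1)<j-1$, so $\up_\Psi(j-1)$ is defined and Remark~\ref{re:ksri}(c) supplies the multiset jump $m_M(j)=m_M(j-1)+1$ needed for the vanishing branch; thus $F=0$ and $K(\Psi;M\dunion S;\mu)=-F'$. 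In Case~3 ($\top_\Psi(j)=\top_\Psi(j-1)+1$), the analogous argument lands in the complementary equal branch $m_M(y+1)=m_M(y)$, yielding $F=F'$ and $K(\Psi;M\dunion S;\mu)=0$.

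The main obstacle is the Case~1 application of the Mirror Straightening Lemma: verifying its long list of hypotheses --- in particular, the chain of mirrors along $\path_\Psi(y,\up_\Psi(z))$ and the corresponding $\mu$-equalities --- requires carefully tracing how the $k$-Schur definition $\Delta^k(\mu)=\{(i,j):k-\mu_i+i<j\}$ turns the spike pattern $\mu=\lambda+\epsilon_S$ into the specific bounce-path geometry of $\Psi$. A secondary technical point is the column-by-column identification of $\Psi\cup\alpha$ and $M\dunion(y+1)$ with $\Delta^k(\nu)$ and $L(\Delta^{k+1}(\nu))$; and in Cases~2/3 the choice between Base Case Lemma and Mirror Lemma must accommodate the sub-cases $\top_\Psi(j)=j$ versus $\top_\Psi(j)<j$.
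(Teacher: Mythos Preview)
Your overall strategy matches the paper's exactly: split via Proposition~\ref{root-expansions}(c) on $j$, then apply the Mirror Straightening Lemma~\ref{kk-schur-straightening} in Case~1 and the Mirror Lemma~\ref{mirror-lemma} in Cases~2--3, all with $z=j-1$. Your Case~1 argument is correct and is the paper's argument, including the identification $\Psi\cup\alpha=\Delta^k(\nu)$ and $M\dunion(y+1)=L(\Delta^{k+1}(\nu))$.

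There is, however, a genuine gap in Cases~2 and~3: your choice $y=\up_\Psi(z)=\up_\Psi(j-1)$ for the Mirror Lemma is wrong. Hypothesis~(a) of Lemma~\ref{mirror-lemma} requires a ceiling in columns $y,y+1$, but at $y=\up_\Psi(j-1)$ the mirror structure of the parallel bounce paths of $j-1$ and $j$ means both columns have removable roots, so there is no ceiling there. The correct choice is $y=\top_\Psi(j)-1$, which lies in the bounce path of $j-1$ in both cases: column $\top_\Psi(j)$ has no removable root (by definition of $\top$), while column $\top_\Psi(j)-1$ either does (Case~2) or does not (Case~3), and in either situation this yields the required ceiling.

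Relatedly, the condition you check, $m_M(j)=m_M(j-1)+1$, is not the branch-deciding condition of the Mirror Lemma; that condition is $m_M(y+1)$ versus $m_M(y)$ at the \emph{top} of the chain, whereas what you derived is merely hypothesis~(d) at the bottom. With the correct $y=\top_\Psi(j)-1$, Remark~\ref{re:ksri}(\ref{ksri:no-lowering-walls}) at $x=y$ gives: in Case~2, $\up_\Psi(y)$ exists (since $\top_\Psi(j-1)<y$), so $m_M(y+1)=m_M(y)+1$ and $F=0$; in Case~3, $\up_\Psi(y)$ does not exist (since $y=\top_\Psi(j-1)$), so $m_M(y+1)=m_M(y)$ and $F=F'$. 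Your base sub-case $\top_\Psi(j)=j$ is fine (there $y=z=j-1$ and your computation is correct), but the general sub-case needs this repair.
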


\begin{figure}[h]
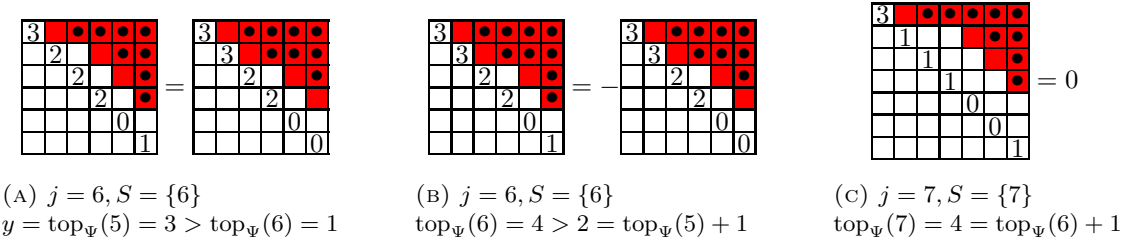
\ytableausetup{boxsize=0.8em,centertableaux}
  \centering
  \hspace{2.4mm}
  \begin{subfigure}[t]{0.31\linewidth}
    \[
    \ytableausetup{nobaseline}
    \hspace{-4mm}
       \begin{ytableau}
        3 & *(red) & *(red) \bullet & *(red) \bullet & *(red)\bullet & *(red)\bullet \\
        {} & 2 & {} & *(red) & *(red) \bullet& *(red) \bullet\\
        {} & {} & 2 & {} & *(red) & *(red)\bullet \\
        {} & {} & {} & 2 & {} & *(red)\bullet \\
        {} & {} & {} & {} & 0 & {} \\
        {} & {} & {} & {} & {} & 1 \\
      \end{ytableau}
      =
      \begin{ytableau}
        3 & *(red) & *(red) \bullet & *(red) \bullet & *(red)\bullet & *(red)\bullet \\
        {} & 3 & *(red) & *(red) \bullet & *(red) \bullet& *(red) \bullet\\
        {} & {} & 2 & {} & *(red) & *(red)\bullet \\
        {} & {} & {} & 2 & {} & *(red)\\
        {} & {} & {} & {} & 0 & {} \\
        {} & {} & {} & {} & {} & 0 \\
      \end{ytableau}
    \]
    \caption{\(j=6,S=\{6\}\\ y=\top_\Psi(5)=3 > \top_\Psi(6)=1\)}

  \end{subfigure}
  \hspace{2.4mm}
  \begin{subfigure}[t]{0.31\linewidth}
    \[
    \hspace{-3mm}
    \ytableausetup{nobaseline}
      \begin{ytableau}
        3 & *(red) & *(red)\bullet & *(red) \bullet& *(red) \bullet& *(red)\bullet \\
        {} & 3 & *(red) & *(red) \bullet& *(red)\bullet & *(red)\bullet \\
        {} & {} & 2 & {} & *(red) & *(red)\bullet \\
        {} & {} & {} & 2 & {} & *(red)\bullet \\
        {} & {} & {} & {} & 0 & {} \\
        {} & {} & {} & {} & {} & 1\\
      \end{ytableau}
      =
      -\begin{ytableau}
        3 & *(red) & *(red)\bullet & *(red) \bullet& *(red) \bullet& *(red)\bullet \\
        {} & 3 & *(red) & *(red) \bullet& *(red)\bullet & *(red)\bullet \\
        {} & {} & 2 & {} & *(red) & *(red)\bullet \\
        {} & {} & {} & 2 & {} & *(red)\\
        {} & {} & {} & {} & 0 & {} \\
        {} & {} & {} & {} & {} & 0\\
      \end{ytableau}
    \]
  \caption{\(j=6, S=\{6\}\\ \top_\Psi(6) = 4 > 2 = \top_\Psi(5)+1\)}
  \end{subfigure}
  \hspace{3.1mm}
  \begin{subfigure}[t]{0.27\linewidth}
      \vspace{-0.08cm}
    \[
    \hspace{-6mm}
    \ytableausetup{nobaseline}
      \begin{ytableau}
        3 & *(red) & *(red)\bullet & *(red)\bullet & *(red)\bullet & *(red)\bullet & *(red)\bullet \\
        {} & 1 & {} & {} & *(red) & *(red) \bullet& *(red)\bullet \\
        {} & {} & 1 & {} & {} & *(red) & *(red)\bullet \\
        {} & {} & {} & 1 & {} & {} & *(red)\bullet \\
        {} & {} & {} & {} & 0 & {} & {} \\
        {} & {} & {} & {} & {} & 0 & {} \\
        {} & {} & {} & {} & {} & {} & 1 \\
      \end{ytableau} = 0
    \]
    \caption{\(j=7, S=\{7\}\\ \top_\Psi(7) = 4 = \top_\Psi(6)+1\)}
  \end{subfigure}
  \caption{Examples of the three cases of
    Proposition~\ref{pieri-straightening} for \(k=3\).}
  \label{fig:pieri-straightening-cases}
\end{figure}
\begin{proof}
First, apply Proposition~\ref{root-expansions}(c) to \(j \in S\) to obtain
\begin{equation}
\label{Kex}
  K(\Psi;M \dunion S; \mu) = K(\Psi;M \dunion (S  \setminus j);\mu) - K(\Psi;M \dunion(S \setminus j);\mu-\epsilon_j)\,.
\end{equation}
Note that $\mu_{j-1}= \mu_j-1 = 0$ since \(\mu = \lambda + \epsilon_{S}\),
 $j = \min(S) \ge m+2$, and  $\lambda\in \Par^k_m$. Also, note throughout that, since \(\mu_{j-1}
= 0\), then \((j-1,j) \not \in \Psi\) and thus
\(\uppath_\Psi(j) \intersect \uppath_\Psi(j-1) = \emptyset\).

If \(y = \top_\Psi(j-1) > \top_\Psi(j)\), %
then \(\up_\Psi(y)\) does not exist but
\(\up_\Psi(y+1)\) does, so  $\Psi$ does not have a ceiling in
columns \(y,y+1\). Thus, Remark~\ref{re:ksri}
gives the conditions for Mirror Straightening Lemma~\ref{kk-schur-straightening} applied with \(z=j-1\)
to \(K(\Psi;M \dunion (S\setminus j);\mu)\) in~\eqref{Kex}, giving
\[K(\Psi;M \dunion (S\setminus j);\mu) = K(\Psi \union \alpha; M \dunion (y+1) \dunion (S \setminus j);
    \mu+\epsilon_{\up_\Psi(y+1)}-\epsilon_j)
+ K(\Psi;M \dunion
    (S \setminus j);\mu-\epsilon_j)\,,  \]
     where $\alpha=(\up_\Psi(y+1),y)$. Therefore,
\[
    K(\Psi;M \dunion S; \mu) =
K(\Psi \union \alpha; M \dunion (y+1) \dunion (S \setminus j);
    \mu+\epsilon_{\up_\Psi(y+1)}-\epsilon_j) \,.
  \]
Using \(\Psi \union \alpha = \Delta^{k}(\nu)\) and
\(M \dunion (y+1) = L(\Delta^{k+1}(\nu))\),
the top case of \eqref{e pieri-straightening} follows.

If \(\top_\Psi(j) > \top_\Psi(j-1)+1\),
then Remark \ref{re:ksri} gives the conditions
to apply Mirror Lemma~\ref{mirror-lemma}
with \(z=j-1\); note that, in this case,
there is no removable
root in column \(\top_\Psi(j)\) of \(\Psi\) by definition of \(\top\), but
there is a removable root of \(\Psi\) in column
\(\top_\Psi(j)-1\),  %
so \(\Psi\) has a ceiling in these columns.
In addition, \(m_{M}(\top_\Psi(j))-1 = m_{M}(\top_\Psi(j)-1)\) by Remark~\ref{re:ksri}(\ref{ksri:no-lowering-walls}),
so it is the first statement in
Mirror Lemma~\ref{mirror-lemma} that applies.
Hence the term \(K(\Psi;M \dunion(S \setminus j);\mu)\) in~\eqref{Kex}
vanishes, as desired.

If \(\top_\Psi(j)=\top_\Psi(j-1)+1\), then there are no removable roots
in columns \(\top_\Psi(j-1),\top_\Psi(j)\) of \(\Psi\) by definition of
\(\top\), so there is a ceiling in
columns \(\top_\Psi(j-1),\top_\Psi(j)\).
Remark~\ref{re:ksri} gives the conditions to apply Mirror Lemma~\ref{mirror-lemma}.
Since \(m_{L(\lowers)}(\top_\Psi(j-1)) =
m_{L(\lowers)}(\top_\Psi(j))\)
by Remark~\ref{re:ksri}(\ref{ksri:no-lowering-walls}), we obtain
\(K(\Psi;M \dunion(S \setminus j);\mu) = K(\Psi;M \dunion(S \setminus j);\mu-\epsilon_j)\), and thus the right side of \eqref{Kex} is
zero, as desired.
\end{proof}

\subsection{Katalan multiplication via root expansions}

Recall that $D^z_{x,y} \subset \Delta^+$ denotes the diagonal occupying columns  $y$ to  $z$, starting in row  $x$.
For $1\leq x<y\leq z$, a succession of diagonals, each occupying columns
$y$ to $z$, forms a {\it staircase}, $E^{z,h}_{x,y}=D_{x,y}^z\union D_{x+1,y}^z \union
\cdots\union D_{x+h-1,y}^z$.
In Example~\ref{ex:diagonals},  \(E_{2,4}^{6,2}\) is the union of light and dark blue cells.

\begin{lem}\label{general-pieri-lemma}
For \(\ell \geq 1\) and $r \geq 0$,
consider a root ideal \(\Psi \subset \Delta^+_{\ell+r}\) and a
multiset \(M\) on \([\ell+r]\).
Let $x,h\geq 0$ %
with  $x+r+h-2 \leq\ell$ be such that
  \begin{enumerate}
  \item \(E_{h} := E_{x,\ell+1}^{\ell+r,h}\subset\Psi\);
  \item \(\Psi' = \Psi \setminus E_{h}\) is a root ideal;
  \item \(m_M(\ell+1) \geq h\) and \(m_M(\ell+r) = m_M(\ell+r-1)+1 =
    \cdots = m_M(\ell+1) + r-1\).
  \end{enumerate}
Then, for \(\gamma \in \Z^\ell\) and \(M'  = M \setminus L(E_h)\),
\[
    K(\Psi; M; (\gamma,1^r)) =
\sum_{a=0}^r \sum_{\substack{
\mu = \gamma+\epsilon_{S} + \epsilon_{S'} \\
S \subset \{x+r-a, \ldots,
      x+r+h-2\} \\ |S| = a
\\ S' = \{\ell+1, \ldots, \ell+r-a\}}
}
    K(\Psi'; M' \dunion S; \mu)\,.
  \]
  where each summand is understood to be truncated in the manner of
  Remark~\ref{re:mismatched-lengths}.
\end{lem}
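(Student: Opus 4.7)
The plan is to proceed by induction on $r$. For the base case $r=0$, the column range of $E_h = E_{x,\ell+1}^{\ell+r,h}$ is empty, so $\Psi' = \Psi$ and $M' = M$; the right-hand side collapses to the single term $a=0$, $S=S'=\emptyset$, matching $K(\Psi;M;\gamma)$. For the sub-case $r \geq 1$, $h = 0$, the constraint $|S|=a$ with $S \subset \{x+r-a,\ldots,x+r-2\}$ (a set of size $a-1$) forces $a=0$, so again only the trivial term survives.

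For the main sub-case $r,h \geq 1$, the plan is to peel off column $\ell+r$ iteratively using Proposition~\ref{root-expansions}(b) combined with Lemma~\ref{zero-lemma}. Since $(x+r+h-2,\ell+r) \in E_h \subset \Psi$ and $\Psi$ is an upper order ideal, $(a,\ell+r) \in \Psi$ for every $a \in [1,x+r+h-2]$, furnishing a supply of removable roots to peel from the top. Each peel takes the form
\[
K(\tilde\Psi; M; (\gamma, 1^r)) = K(\tilde\Psi \setminus (a_i, \ell+r); M; (\gamma, 1^r)) + K(\hat\Psi; \hat M; (\gamma + \epsilon_{a_i}, 1^{r-1})),
\]
where the spin-off arises from Proposition~\ref{root-expansions}(b) applied to the removable root $(a_i,\ell+r)$ followed by Lemma~\ref{zero-lemma} (since the $(\ell+r)$-th coordinate $1$ is decremented to $0$). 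Crucially, the truncations $\hat\Psi, \hat M$ to columns $\leq \ell+r-1$ do not depend on the iteration index, and $\hat\Psi$ contains $E_{x,\ell+1}^{\ell+r-1,h}$, so each spin-off satisfies the hypotheses of the lemma with $(r-1, h)$ in place of $(r,h)$.

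Applying the inductive hypothesis to each spin-off expands it as a sum of Katalan functions indexed by pairs $(a', S'')$ with $S'' \subset \{x+r-1-a',\ldots,x+r+h-3\}$ of size $a'$ and trailing set $\{\ell+1,\ldots,\ell+r-1-a'\}$. Identifying $S := \{a_i\} \cup S''$ of size $a := a'+1$ yields candidate outer-sum terms, with trailing set matching $\{\ell+1,\ldots,\ell+r-a\}$.

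The main obstacle is the combinatorial bookkeeping: the multiset of pairs $(a_i, S'')$ produced by the iteration, together with the residual term after peeling terminates, must match the claimed summation over subsets $S \subset \{x+r-a,\ldots,x+r+h-2\}$ of size $a$. In particular, spin-offs with $a_i \in [1,x-1]$ (which arise when $x > 1$) fall outside the target range, and their contributions must cancel against pieces of the residual term via additional root-expansion manipulations. Completing this reconciliation, together with the analysis of the residual after column $\ell+r$ of $\tilde\Psi$ is exhausted of roots (which reduces, via the factorisation $(1-L_{\ell+r})^{m_M(\ell+r)}$ acting on $g_{(\gamma,1^r)}$, to terms already covered by the inductive hypothesis applied to $\hat\Psi, \hat M$), is the most delicate part of the proof.
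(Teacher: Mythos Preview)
Your induction scheme breaks down at the very first step of the iterated peel. After removing the bottom root $\alpha=(x{+}r{+}h{-}2,\ell{+}r)$ from $\Psi$, the root $(x{+}r{+}h{-}3,\ell{+}r)$ is \emph{not} removable from $\Psi\setminus\alpha$: the element $(x{+}r{+}h{-}3,\ell{+}r{-}1)$ lies in the bottom diagonal $D_{x+h-1,\ell+1}^{\ell+r}$ of $E_h\subset\Psi$, so deleting $(x{+}r{+}h{-}3,\ell{+}r)$ would destroy the upper-order-ideal property. More generally, column $\ell{+}r$ of $\Psi$ has a single removable root at any time, and once you remove it you are blocked by column $\ell{+}r{-}1$. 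Thus the ``supply of removable roots to peel from the top'' does not exist, and the spin-offs with $a_i$ ranging over an interval never get generated. The later bookkeeping (which you already flag as unfinished) is moot.

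The paper's argument avoids this obstruction by inducting on $r+h$ rather than on $r$ alone, and by combining two tools you do not invoke. First, the Sliding Lemma (Lemma~\ref{sliding-lemma}) removes the single removable root $\alpha$ together with one copy of $\ell{+}r$ from $M$, splitting $K(\Psi;M;(\gamma,1^r))$ into a truncated $(r{-}1)$ term and a residual $K(\Psi\setminus\alpha;M\setminus(\ell{+}r);(\gamma,1^r))$. Second, the Diagonal Removal Lemma (Lemma~\ref{big-staircase-lemma}) is applied to this residual to strip the entire bottom diagonal $D_{x+h-1,\ell+1}^{\ell+r-1}$ of $E_h$ at once, yielding $K(\Psi'\cup E_{h-1};M'\sqcup L(E_{h-1});(\gamma,1^r))$. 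This second term now satisfies the hypotheses with $(r,h{-}1)$, and the induction closes cleanly with no extraneous terms and no cancellation to arrange. Your approach would need both an analogue of diagonal removal (to unblock column $\ell{+}r$) and the two-parameter induction to be salvageable.
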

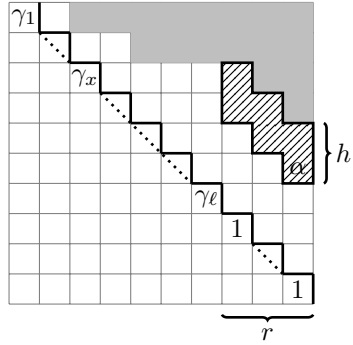
\begin{figure}[h]
  \centering
    \begin{tikzpicture}
    \draw[step=0.4cm, color=gray] (0, 0) grid (4, 4);

    \node at (0.2,3.8) {$\gamma_1$}; \draw[line width=1pt,
    style=dotted] (0.4,3.6) -- (0.8,3.2); \node at (1.0,3.0)
    {$\gamma_x$}; \draw[line width=1pt, style=dotted] (1.2,2.8) --
    (2.4,1.6); \node at (2.6,1.4) {$\gamma_\ell$}; \node at (3,1.0)
    {$1$}; \draw[line width=1pt, style=dotted] (3.6,0.4) -- (3.2,0.8);
    \node at (3.8,0.2) {$1$};

    \path[fill=lightgray]
    (0.8,4)--(0.8,3.6)--(1.6,3.6)--(1.6,3.2)--(2.0,3.2)--(3.2,3.2)
    --(3.2,2.8)--(3.6,2.8)--(3.6,2.4)--(4,2.4)--(4,4)--cycle;

    \draw[line width=1pt] \foreach \x in {1,...,9}{(0.4*\x,4-0.4*\x)
      -- (0.4*\x+0.4,4-0.4*\x)}; \draw[line width=1pt] \foreach \x in
    {0,1,...,9}{(0.4*\x+0.4,4-0.4*\x) -- (0.4*\x+0.4,3.6-0.4*\x)};

    \path[fill=white]
    (4,1.6)--(3.6,1.6)--(3.6,2.0)--(3.2,2.0)--(3.2,2.4)--(2.8,2.4)--(2.8,3.2)--(3.2,3.2)--(3.2,2.8)--(3.6,2.8)--(3.6,2.4)--(4,2.4)--cycle;

    \node at (3.8, 1.8) {$\alpha$};
    \draw[line width=1pt, pattern=north east lines]
    (4,1.6)--(3.6,1.6)--(3.6,2.0)--(3.2,2.0)--(3.2,2.4)--(2.8,2.4)--(2.8,3.2)--(3.2,3.2)--(3.2,2.8)--(3.6,2.8)--(3.6,2.4)--(4,2.4)--cycle;

    \draw [decorate,decoration={brace,mirror,amplitude=2pt,raise=4pt},
    yshift=0pt,line width=1pt] (4.0,1.6)--(4.0,2.4) node
    [midway,xshift=0.4cm] {$h$};

    \draw [decorate, decoration={brace, mirror,
      amplitude=2pt,raise=4pt}, yshift=0pt, line width=1pt]
    (2.8,0)--(4.0,0) node [midway, yshift=-0.4cm]{$r$};
  \end{tikzpicture}
  \caption{Schematic of the setup for Lemma~\ref{general-pieri-lemma} where
 $\Psi'$ are the roots in light gray, $E_h$ is the diagonally shaded region, and
    \(\Psi=\Psi'\cup E_h\).}
\end{figure}
\begin{proof}
If \(r = 0\) or \(h = 0\), \(E_{h}\) is the empty set and the equality
holds trivially. We proceed by induction on \(r+h\) with \(r,h>0\).
Noting that \(\alpha = (x+r+h-2, \ell+r)\) is the only root in the
lowest row of $E_h$, it is removable from $\Psi$ by (b).  Thus,
Lemma~\ref{sliding-lemma} implies
\[
K(\Psi;M;(\gamma,1^r))=
K(\Psi \setminus \alpha; M \setminus (\ell+r); (\gamma,1^r)) +
K(\hat{\Psi}; \hat{M} \dunion (x+r+h-2); (\gamma,1^{r-1}) +\epsilon_{x+r+h-2})\,.
\]
We shall apply Diagonal Removal Lemma~\ref{big-staircase-lemma} with $x=x+h-1, y=\ell+1,z=\ell+r$ to
the first term on the right hand side; %
indeed, $\Psi \setminus \alpha$ has a ceiling in $\ell+r-1,\ell+r$ and
(c) implies \(M \setminus (\ell+r)\) has the same number of occurrences of \(\ell+r-1,\ell+r\).
Furthermore, since $\Psi$ has no roots lower than $\alpha$, $\Psi\setminus\alpha$ has a wall in rows $x+r+h-2,\ldots,\ell+r$
(recall that $x+r+h-2\leq \ell$).
By definition, $D_{x+h-1,\ell+1}^{\ell+r}=E_h\setminus E_{h-1}$ is the lowest diagonal of $E_h$ and
thus every root of $D_{x+h-1,\ell+1}^{\ell+r-1}= D_{x+h-1,\ell+1}^{\ell+r}\setminus\alpha$ is removable from $\Psi\setminus\alpha$.
Therefore,
\[
K(\Psi;M;(\gamma,1^r))=
    K(\Psi' \union E_{h-1}; M' \dunion L(E_{h-1});(\gamma,1^r)) +
K(\hat{\Psi}; \hat{M} \dunion (x+r+h-2); (\gamma,1^{r-1}) +\epsilon_{x+r+h-2})\,.
  \]
The inductive hypothesis applied to the first term with \(h=h-1\) and
applied to the second term with \(r=r-1\) gives
  \begin{align*}
        K(\Psi; M; (\gamma,1^r))
    & = \sum_{a=0}^r \sum_{\substack{\mu =
      \gamma+\epsilon_{T} + \epsilon_{T'} \\ T \subset \{x+r-a, \ldots,
      x+r+h-3\} \\ |T| = a \\ T' = \{\ell+1, \ldots, \ell+r-a\}}}
  K(\Psi'; M' \dunion T; \mu) \\
    & + \sum_{a=0}^{r-1} \sum_{\substack{\mu =
      \gamma+\epsilon_{x+r+h-2}+\epsilon_{T} + \epsilon_{T'} \\ T
      \subset \{x+r-1-a, \ldots,
      x+r+h-3\} \\ |T| = a \\ T' = \{\ell+1, \ldots, \ell+r-1-a\}}}
    K(\Psi'; M' \dunion (T \union \{x+r+h-2\});\mu)
  \end{align*}
  Reindexing the second sum to go from  $1$ to  $r$ readily shows
  that we recover the desired sum, with the first sum corresponding
  to \(x+r+h-2 \not \in S\) and the second to \(x+r+h-2
  \in S\).
\end{proof}

\begin{prop}[Unstraightened Pieri Rule]\label{unstraightened-pieri-rule}
For $\lambda\in\Par^k_{\ell-k-1}$ and $0\leq r\leq k$,
  \[
    g_{1^r} \g_\lambda^{(k)} = \sum_{a=0}^{r}
\sum_{\substack{\mu = \lambda + \epsilon_{S} + \epsilon_{S'}\\
S \subset
      \{\ell-k+1+r-a,\ldots,\ell\} \\ |S| = a \\ S' = \{\ell+1,
      \ldots, \ell+r-a\}}}
    K(\Delta^{k}(\mu);L(\Delta^{k+1}(\mu))
    \dunion (S \union S'); \mu) \,.
  \]
\end{prop}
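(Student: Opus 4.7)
The plan is to apply Lemma~\ref{general-pieri-lemma} to a Katalan representation of $g_{1^r} \g_\lambda^{(k)}$ of weight length $\ell + r$, with parameters $n = \ell$, $x = \ell - k + 1$, $h = k + 1 - r$, and $\gamma = \bar\lambda := (\lambda, 0^{\ell - m})$ (where $m = \ell(\lambda)$), chosen so that the lemma's index ranges $\{x+r-a, \ldots, x+r+h-2\}$ and $\{n+1, \ldots, n+r-a\}$ coincide with the Proposition's $S$ and $S'$ ranges. Setting $\tilde\lambda := (\lambda, 0^{\ell+r-m})$ and $E := E^{\ell+r,\, k+1-r}_{\ell-k+1,\, \ell+1}$, I will take
\[
\Psi := \Delta^k(\tilde\lambda) \cup E, \qquad M := L(\Delta^{k+1}(\tilde\lambda)) \dunion \{\ell+1, \ldots, \ell+r\} \dunion L(E).
\]
Verifying the lemma's hypotheses is direct: (a) is immediate; (b) follows because $\Psi$ is a root ideal with $\Psi \setminus E = \Delta^k(\tilde\lambda)$ (any cell in the upper closure of $E$ not in $E$ itself lies in $\Delta^k(\tilde\lambda)$, using $\lambda_i \leq k$ and $m \leq \ell - k - 1$); and (c) reduces to the multiplicity count $m_M(\ell+s) = (\ell+s-k-2)+1+(k+1-r) = \ell+s-r$ for $s \in [1, r]$.

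Granting the key identity $K(\Psi; M; (\bar\lambda, 1^r)) = g_{1^r}\, \g_\lambda^{(k)}$, Lemma~\ref{general-pieri-lemma} will produce
\[
g_{1^r}\, \g_\lambda^{(k)} = \sum_{a=0}^r \sum_{S,\,S'} K\bigl(\Delta^k(\tilde\lambda);\,L(\Delta^{k+1}(\tilde\lambda)) \dunion \{\ell+1,\ldots,\ell+r\} \dunion S;\,\mu\bigr),
\]
with $\mu = \bar\lambda + \epsilon_S + \epsilon_{S'}$ and $S, S'$ ranging over the prescribed index sets. Since $\min(S \cup S') \geq \ell-k+1+r-a \geq m+2$ and $\max(S \cup S') - \min(S \cup S') \leq k-1$, Remark~\ref{re:ksri}(d) applies to $\lambda + \epsilon_{S \cup S'}$; truncating each summand to the length $\ell+r-a$ of $\mu$ then identifies the root ideal with $\Delta^k(\mu)$ and the multiset with $L(\Delta^{k+1}(\mu)) \dunion (S \cup S')$, yielding the Proposition's RHS.

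The main obstacle is establishing $K(\Psi; M; (\bar\lambda, 1^r)) = g_{1^r}\, \g_\lambda^{(k)}$. My plan is to begin from the concatenated form $g_{1^r}\, \g_\lambda^{(k)} = K(\Delta^k(\lambda) \rootconcat \emptyset_r;\, \Delta^{k+1}(\lambda) \rootconcat \emptyset_r;\, (\lambda, 1^r))$ (obtained via Proposition~\ref{prop:extremal}(b) and Lemma~\ref{katalan-concatenation}) and inductively transform it into $K(\Psi; M; (\bar\lambda, 1^r))$ by induction on $\ell - m \geq k + 1$. Each inductive step ``inserts a zero'' into the weight: a trailing zero is introduced via Lemma~\ref{zero-lemma}, shifted into the correct middle position using the straightening relations of Lemma~\ref{katalan-straightening} together with the mirror lemmas of Section~4, and the resulting modifications to $\Psi$ and $M$ are absorbed using Lemma~\ref{lem:gen-pascal-kh-identity} and the root expansions of Proposition~\ref{root-expansions}.
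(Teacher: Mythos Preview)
Your setup of Lemma~\ref{general-pieri-lemma} (with $x=\ell-k+1$, $h=k+1-r$) and the post-processing via Remarks~\ref{re:mismatched-lengths} and~\ref{re:ksri}(d) are correct and match the paper exactly. The difference, and the gap, is in how you establish the key identity $K(\Psi;M;(\bar\lambda,1^r))=g_{1^r}\,\g_\lambda^{(k)}$.

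Your plan to start from $K(\Delta^k(\lambda)\rootconcat\emptyset_r;\Delta^{k+1}(\lambda)\rootconcat\emptyset_r;(\lambda,1^r))$ and then ``insert zeros one at a time'' via Lemma~\ref{zero-lemma}, Lemma~\ref{katalan-straightening}, and the mirror lemmas is not clearly workable. Lemma~\ref{katalan-straightening} sends a weight $\gamma$ to $s_i\gamma-\epsilon_i+\epsilon_{i+1}$ with a sign; applied to $(\ldots,1,0)$ in positions $i,i+1$ this produces $(\ldots,-1,2)$, not $(\ldots,0,1)$, so it does not ``slide a zero past a one.'' The mirror lemmas of Section~4 all require specific ceiling/wall/mirror structure on $\Psi$ and specific multiplicity patterns on $M$ that you have not verified and that do not obviously hold along your proposed induction. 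Invoking Lemma~\ref{lem:gen-pascal-kh-identity} here is also suspect: that lemma is about the $k_\gamma$ monomials and is already baked into Lemma~\ref{katalan-concatenation}; it does not give you a mechanism for moving zeros inside a Katalan function.

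The paper avoids all of this by padding \emph{before} concatenating. Since $\lambda\in\Par^k_{\ell-k-1}$, Lemma~\ref{zero-lemma} gives $\g_\lambda^{(k)}=K(\Delta^k(\lambda,0^{k+1});\Delta^{k+1}(\lambda,0^{k+1});(\lambda,0^{k+1}))$, already of length $\ell$. Concatenating with $g_{1^r}$ then yields $K(\Psi_0;M_0;(\lambda,0^{k+1},1^r))$ with $\Psi_0=\Delta^k(\lambda,0^{k+1})\rootconcat\emptyset_r$ and $M_0=L(\Delta^{k+1}(\lambda,0^{k+1})\rootconcat\emptyset_r)$. One then observes that $\Psi_0\setminus\Psi=D_{\ell,\ell+1}^{\ell+1}\cup D_{\ell-1,\ell+1}^{\ell+2}\cup\cdots\cup D_{\ell-r+2,\ell+1}^{\ell+r-1}$ and $M_0\setminus M=L(\Psi_0\setminus\Psi)$, and removes these diagonals one at a time with the Diagonal Removal Lemma~\ref{big-staircase-lemma} (the hypotheses are easy to check since the weight is constant equal to $1$ in the relevant rows). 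This gives the key identity directly, with no straightening or induction on inserted zeros. You should replace your inductive plan with this argument.
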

\begin{proof}
For $\lambda\in\Par^k_{\ell-k-1}$, Definition~\ref{def:KatKks} and Lemma~\ref{zero-lemma} give
\[
    \g_\lambda^{(k)} =
    K(\Delta^{k}((\lambda,0^{k+1}));\Delta^{k+1}((\lambda,0^{k+1}));
   (\lambda,0^{k+1}))\,.
 \]
Since \(g_{1^r} = K(\emptyset_r;\emptyset_r;1^r)\) by Proposition~\ref{prop:extremal}(b)
where \(\emptyset_r \subset \Delta^+_r\) denotes the empty root ideal of length \(r\), the concatenation rule of Lemma~\ref{katalan-concatenation} implies that
  \[
    g_{1^r} \g_{\lambda}^{(k)} =
K(\Psi,M,(\lambda,0^{k+1},1^r))\,,
  \]
for $\Psi = \Delta^{k}(\lambda,0^{k+1})\rootconcat \emptyset_r$ and $M= L(\Delta^{k+1}(\lambda,0^{k+1})\rootconcat \emptyset_r)$.

\begin{figure}[h]\centering
  \begin{subfigure}[t]{0.45\textwidth}
    \begin{tikzpicture}
      \draw[step=0.4cm, color=gray] (0, -0.4) grid (4.4, 4);

      \node at (0.2,3.8) {$\lambda_1$}; \draw[line width=1pt,
      style=dotted] (0.4,3.6) -- (0.8,3.2); \node at (1.0,3.0)
      {$\lambda_m$}; \node at (1.4,2.6) {$0$}; \node at (1.8,2.2)
      {$0$}; \draw[line width=1pt, style=dotted] (2.0,2.0) --
      (2.8,1.2); \node at (3,1) {$0$}; \node at (3.4,0.6) {$1$};
      \draw[line width=1pt, style=dotted] (3.6,0.4)--(4,0); \node at
      (4.2,-0.2) {$1$};

      \path[fill=lightgray]
      (3.2,4)--(4.4,4)--(4.4,0.8)--(3.2,0.8)--(3.2,2.8)--(2.4,2.8)--(2.4,3.2)--(1.6,3.2)--(1.6,3.6)--(0.8,3.6)--(0.8,4)--cycle;

      \foreach \x in {0,...,7}
        \node at (1.4+0.4*\x, 3.8) {$\bullet$};
      \foreach \x in {0,1,5}
        \node at (2.2+0.4*\x,3.4) {$\bullet$};
      \foreach \x in {0,...,3}
        \node at (3.0+0.4*\x,3.0) {$\bullet$};
      \foreach \x in {0,...,2}
        \foreach \y in {0,1,4}
          \node at (3.4+0.4*\x,2.6-0.4*\y) {$\bullet$};
      \node at (3.8,1.8) {$\vdots$};

      \node at (3.4,3.4) {$\Psi,M$};

      \draw[line width=1pt] \foreach \x in
      {1,...,10}{(0.4*\x,4-0.4*\x) -- (0.4*\x+0.4,4-0.4*\x)};
      \draw[line width=1pt] \foreach \x in
      {0,1,...,10}{(0.4*\x+0.4,4-0.4*\x) -- (0.4*\x+0.4,3.6-0.4*\x)};

      \draw[line width=1pt] (3.6,0.8) -- (4.4,0.8) -- (4.4,0);
      \path[fill=white] (3.6,0.8) -- (4.4,0.8) -- (4.4,0) -- cycle;
      \node at (4.1,0.6) {$\emptyset_r$};

      \draw [decorate, decoration={brace, mirror,
        amplitude=2pt,raise=4pt}, yshift=0pt, line width=1pt]
      (3.2,-0.4)--(4.4,-0.4) node [midway, yshift=-0.4cm]{$r$};

      \draw [decorate, decoration={brace, mirror,
        amplitude=2pt,raise=4pt}, yshift=0pt, line width=1pt]
      (1.2,-0.4)--(3.15,-0.4) node [midway, yshift=-0.4cm]{$k+1$};
    \end{tikzpicture}
  \end{subfigure}
  \begin{subfigure}[t]{0.45\textwidth}
    \begin{tikzpicture}
      \draw[step=0.4cm, color=gray] (0, -0.4) grid (4.4, 4);

      \node at (0.2,3.8) {$\lambda_1$}; \draw[line width=1pt,
      style=dotted] (0.4,3.6) -- (0.8,3.2); \node at (1.0,3.0)
      {$\lambda_m$}; \node at (1.4,2.6) {$0$}; \node at (1.8,2.2)
      {$0$}; \draw[line width=1pt, style=dotted] (2.0,2.0) --
      (2.8,1.2); \node at (3,1) {$0$}; \node at (3.4,0.6) {$1$};
      \draw[line width=1pt, style=dotted] (3.6,0.4)--(4,0); \node at
      (4.2,-0.2) {$1$};

      \path[fill=white]
      (3.2,2.4)--(4.4,2.4)--(4.4,0.8)--(3.2,0.8)--cycle;
      \path[fill=lightgray]
      (3.2,4)--(4.4,4)--(4.4,1.6)--(4.0,1.6)--(4.0,2.0)--(3.6,2.0)--(3.6,2.4)--(3.2,2.4)--(3.2,2.8)--(2.4,2.8)--(2.4,3.2)--(1.6,3.2)--(1.6,3.6)--(0.8,3.6)--(0.8,4)--cycle;
      \node at (3.4,3.4) {$\Psi'',M''$};

      \foreach \x in {0,...,7}
        \node at (1.4+0.4*\x, 3.8) {$\bullet$};
      \foreach \x in {0,1,5}
        \node at (2.2+0.4*\x,3.4) {$\bullet$};
      \foreach \x in {0,...,3}
        \node at (3.0+0.4*\x,3.0) {$\bullet$};
      \foreach \y in {0,1}
      {
        \pgfmathsetmacro{\z}{\y+1}
        \foreach \x in {\z,...,2}
        \node at (3.4+0.4*\x,2.6-0.4*\y) {$\bullet$};
      }
      \node at (3.8,1.8) {$\vdots$};

      \draw[line width=1pt] \foreach \x in
      {1,...,10}{(0.4*\x,4-0.4*\x) -- (0.4*\x+0.4,4-0.4*\x)};
      \draw[line width=1pt] \foreach \x in
      {0,1,...,10}{(0.4*\x+0.4,4-0.4*\x) -- (0.4*\x+0.4,3.6-0.4*\x)};

      \draw[line width=1pt] (3.6,0.8) -- (4.4,0.8) -- (4.4,0);
      \path[fill=white] (3.6,0.8) -- (4.4,0.8) -- (4.4,0) -- cycle;
      \node at (4.1,0.6) {$\emptyset_r$};

      \draw[line width=1pt,
      pattern=crosshatch]
      (3.2,0.8)--(4,0.8)--(4,1.2)--(3.6,1.2)--(3.6,1.6)--(3.2,1.6)--cycle;

      \draw[line width=1pt, pattern=north east lines]
      (4.4,0.8)--(4,0.8)--(4,1.2)--(3.6,1.2)--(3.6,1.6)--(3.2,1.6)--(3.2,2.4)--(3.6,2.4)--(3.6,2.0)--(4.0,2.0)--(4.0,1.6)--(4.4,1.6)--cycle;
      \node at (3.4,2.2) {$E$};

      \draw [decorate, decoration={brace, mirror,
        amplitude=2pt,raise=4pt}, yshift=0pt, line width=1pt]
      (3.2,-0.4)--(4.4,-0.4) node [midway, yshift=-0.4cm]{$r$};

      \draw [decorate, decoration={brace, mirror,
        amplitude=2pt,raise=4pt}, yshift=0pt, line width=1pt]
      (1.2,-0.4)--(3.15,-0.4) node [midway, yshift=-0.4cm]{$k+1$};

      \draw [decorate, decoration={brace, amplitude=2pt,raise=4pt}, yshift=0pt,
      line width=1pt](4.4,1.6)--(4.4,0.8) node [midway, xshift=1.0cm]{$k+1-r$};
    \end{tikzpicture}
  \end{subfigure}
    \caption{The schematic on the left represents $\Psi = \Delta^{k}(\lambda,0^{k+1})\rootconcat \emptyset_r$ and
$M = L(\Delta^{k+1}(\lambda,0^{k+1})\rootconcat \emptyset_r)$.  On the right,  $\Psi''$
and  $M''$ are the solid grey region and \(\bullet\)'s, respectively,
and the crosshatched region is $\Psi\backslash\Psi' = \Psi \setminus (\Psi''\cup E)$.  Here, $m=\ell-k-1$.}
\end{figure}
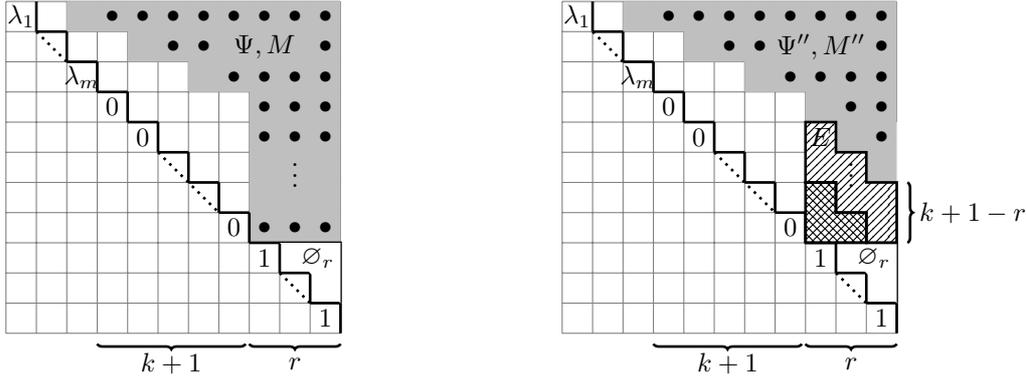
Let \(E = E_{\ell-k+1,\ell+1}^{\ell+r,k+1-r}\) and set
\begin{align*}
  \Psi''&= \Delta^{k}(\lambda,0^{k+1},1^r) \quad\text{ and $\quad\Psi'  = \Psi'' \union E\,;$}
\\
 M''&= L(\Delta^{k+1}(\lambda,0^{k+1},1^r)) \quad\text{
and $\quad M'  = M'' \dunion \{\ell+1,\ldots,\ell+r\} \dunion L(E)$.}
\end{align*}
Observe that $\Psi \setminus \Psi'=
D_{\ell,\ell+1}^{\ell+1}\cup D_{\ell-1,\ell+1}^{\ell+2}\cup
\cdots \cup D_{\ell-r+2,\ell+1}^{\ell+r-1}$ and \(M \setminus M' =
L(\Psi \setminus \Psi')\).
We remove these diagonals from $\Psi$ by iteratively applying Diagonal
Removal Lemma~\ref{big-staircase-lemma}
until
  \[
g_{1^r}\,\g_\lambda^{(k)} =
K(\Psi,M,(\lambda,0^{k+1},1^r))
    = K(\Psi';M';(\lambda,0^{k+1},1^r))\,.
  \]
We can then apply Lemma~\ref{general-pieri-lemma} with \(x = \ell-k+1, h = k+1-r,  \Psi = \Psi'\)
and \(M = M'\) to get
\[
  K(\Psi'; M'; (\lambda,0^{k+1},1^r)) =
\sum_{a=0}^r
\sum_{\substack{\mu =
      \lambda + \epsilon_{S} + \epsilon_{S'}\\
S \subset
      \{\ell+r-k+1-a,\ldots,\ell\} \\ |S| = a \\ S' = \{\ell+1,
      \ldots, \ell+r-a\}}}
      K(\Psi'';(M' \setminus L(E)) \sqcup S; \mu) \,.
  \]
Since $M' \setminus L(E) = M'' \dunion \{\ell+1,\ldots, \ell+r\}$,
we have for each summand $K(\Psi'';(M' \setminus L(E)) \sqcup S; \mu) = K(\Psi'';M'' \dunion (S \union S'); \mu)$ by Remark \ref{re:mismatched-lengths}.
Further, since \(\mu = \lambda+\epsilon_{S \cup S'}\) with  $\max(S\cup S')-\min(S\cup S') \le k-1$,
this is equal to $K(\Delta^k(\mu);L(\Delta^{k+1}(\mu)) \dunion (S \union S'); \mu)$
by Remark~\ref{re:ksri}(\ref{ksri:last-part-flexible}) and Remark \ref{re:mismatched-lengths}.
\end{proof}

\begin{lem}\label{lem:rm}
For \(\ell \geq 1\) and \(0 \leq r \leq k\), the map
  \[
{\rm rm}\from \Dunion_{a=0}^r
\Dunion_{
\substack{
S \subset
\{\ell-k+1+r-a,\ldots,\ell\} \\ |S| = a\\
S' = \{\ell+1, \ldots, \ell+r-a\} }
}
\{S\cup S'\}
\,
\to
\,
\big\{ R \subset \Z/(k+1)\Z \, : \,  |R|=r \big\}
\]
given by \(S\cup S' \mapsto \{\, \ov{-s}\,\st s \in S \union S'\}\) is a bijection, where $\ov{z}$ denotes the image of $z$ in  $\Z/(k+1)\Z$.
\end{lem}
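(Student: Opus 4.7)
The plan is to construct an explicit two-sided inverse to $\mathrm{rm}$. A key preliminary observation is that for each fixed $a$, a preimage $T = S \cup S'$ lies inside the window $W_a := \{\ell-k+1+r-a,\ldots,\ell+r-a\}$ of $k$ consecutive integers, so its elements have distinct residues modulo $k+1$. Moreover, the intersection $T \cap (\ell,\infty) = S' = \{\ell+1,\ldots,\ell+r-a\}$ is forced to be the initial segment above $\ell$ of length $r-a$, while $S = T \cap (-\infty,\ell]$ collects the remaining $a$ elements from $\{\ell-k+1+r-a,\ldots,\ell\}$.  Thus for each $a$, the triple $(a,S,S')$ is recovered from $T$, and hence from the image $\mathrm{rm}(T) \subset \Z/(k+1)\Z$.

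Given $R \subset \Z/(k+1)\Z$ with $|R|=r$, I would define $b$ to be the largest integer in $\{0,1,\ldots,r\}$ such that $\overline{-\ell-j} \in R$ for every $j \in [b]$, and set $a := r-b$, $S' := \{\ell+1,\ldots,\ell+b\}$, and $S$ to be the unique lift of $R \setminus \{\overline{-\ell-j} : j \in [b]\}$ to a subset of $\{\ell-k+1+b,\ldots,\ell\}$. Well-definedness rests on the clean partition
\[
\Z/(k+1)\Z \setminus \{\overline{-\ell-b-1}\} \;=\; \{\overline{-\ell-j} : j \in [b]\} \;\sqcup\; \{\overline{-\ell+i} : 0 \leq i \leq k-1-b\},
\]
whose right-hand block is exactly the set of residues of $\{\ell-k+1+b,\ldots,\ell\}$. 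Since $\overline{-\ell-b-1} \notin R$ by maximality of $b$, the set $R \setminus \{\overline{-\ell-j} : j \in [b]\}$ has exactly $a$ elements and all of them lie in this block, so it lifts uniquely to an $a$-subset $S$ of $\{\ell-k+1+b,\ldots,\ell\}$ as required.

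To confirm this is a two-sided inverse, I would verify both compositions. Starting from a domain element with parameter $a$, the image $\mathrm{rm}(T)$ contains $\{\overline{-\ell-1},\ldots,\overline{-\ell-(r-a)}\}$ (from $S'$) but misses $\overline{-\ell-(r-a+1)}$, since the latter is precisely the residue not represented in $W_a$ modulo $k+1$; hence the recovered $b$ equals $r-a$ and the construction returns the original $(S,S')$. In the reverse direction, the unique-lift property immediately shows $\mathrm{rm}$ applied to the reconstructed preimage recovers $R$.

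I expect the main (minor) obstacle to be organizing the modular arithmetic cleanly and tracking the boundary cases $b=0$ (where $S' = \emptyset$, so the defining condition on $b$ is vacuous and we must merely check $\overline{-\ell-1} \notin R$) and $b = r$ (where $S = \emptyset$ and $R = \{\overline{-\ell-1},\ldots,\overline{-\ell-r}\}$ is forced). As a sanity check, or as an alternative route requiring only injectivity, one can verify the cardinality identity $\sum_{a=0}^{r} \binom{k-r+a}{a} = \binom{k+1}{r}$ by the hockey stick identity, which matches the size of the domain (the inner set has $k-r+a$ elements) against that of the codomain.
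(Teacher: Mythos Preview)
Your proposal is correct and follows essentially the same approach as the paper: observe that each $S\cup S'$ lies in a window of $k$ consecutive integers so $\mathrm{rm}$ is well-defined, then construct the inverse by taking $b$ to be the length of the maximal initial run $\{\overline{-(\ell+1)},\ldots,\overline{-(\ell+b)}\}\subset R$ and lifting the remaining residues into $\{\ell-k+1+b,\ldots,\ell\}$. Your write-up is in fact more thorough than the paper's, which states the inverse map without explicitly checking both compositions; your added cardinality check via the hockey stick identity is a nice redundancy but not needed.
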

\begin{proof}
For each $0\leq a\leq r$, $S\union S'$ is a subset
of the $k$ consecutive entries $\{\ell-k+1+r-a,\ldots, \ell+r-a\}$ and  $|S\union S'| = r$.
Thus \({\rm rm}\) is well-defined and one-to-one.
Given \(R \subset \Z/(k+1)\Z\) with \(|R|=r\),
to construct its preimage  $S \cup S'$, consider the largest \(b\) such that
$\{\ov{-(\ell+1)},\ldots,\ov{-(\ell+b)}\}\subset R$
or set $b=0$ if \(\ov{-(\ell+1)} \not \in R\).
Then  $S \cup S' = f_{\ell, b}(R)$, for the map  $f_{\ell,b} \colon \ZZ/(k+1)\ZZ
\to \ZZ$ given by
  \begin{align*}
    {\ov{-(\ell+i)}} & \ \ \ \mapsto  \ \ \ell+i   \, \ \text{ for } 1 \leq i
                     \leq b \\
    {\ov{-(\ell+b+j)}} & \ \ \ \mapsto \ \  \ell+b+j-k-1 \, \ \text{ for } 1 \le j \leq k+1-b.  \qedhere
  \end{align*}
\end{proof}

Combining Proposition \ref{unstraightened-pieri-rule} and Lemma~\ref{lem:rm} yields the following result.
\begin{cor}\label{unstraightened-pieri-rewrite}
For $\lambda\in\Par^k_\ell$ and $0\leq r\leq k$,
   \[
    g_{1^r} \g_\lambda^{(k)} = \sum_{\substack{R \subset
        \Z/(k+1)\Z \\ |R| = r}}
    K(\Delta^{k}(\lambda+\epsilon_{A}); L(\Delta^{k+1}(\lambda+\epsilon_{A})) \dunion A; \lambda+\epsilon_{A})
  \]
where $A={\rm rm}^{-1}(R)$.
\end{cor}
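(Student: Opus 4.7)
The plan is to reindex the sum in Proposition \ref{unstraightened-pieri-rule} using the bijection provided by Lemma \ref{lem:rm}. The first observation is that every summand in the Proposition is determined by the single set $A := S \cup S'$: indeed, $\mu = \lambda + \epsilon_S + \epsilon_{S'} = \lambda + \epsilon_A$, the root ideal is $\Delta^k(\mu)$, and the multiset appearing in the Katalan function is $L(\Delta^{k+1}(\mu)) \dunion (S \cup S') = L(\Delta^{k+1}(\mu)) \dunion A$. Hence the double sum in the Proposition may be rewritten as a single sum indexed by the admissible sets $A$, with indexing set
\[
\Dunion_{a=0}^r \Dunion_{\substack{S \subset \{\ell-k+1+r-a,\ldots,\ell\} \\ |S|=a,\, S'=\{\ell+1,\ldots,\ell+r-a\}}} \{S \cup S'\}.
\]

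Next, invoke Lemma \ref{lem:rm}, which asserts that this disjoint union is in bijection with the size-$r$ subsets of $\Z/(k+1)\Z$ via $A \mapsto \{\,\overline{-s}: s \in A\,\}$. Reindexing the sum along this bijection --- taking $A = \mathrm{rm}^{-1}(R)$ for each $R$ --- produces exactly the expression on the right-hand side of the Corollary. Thus the two steps of the proof are: first, repackage the Proposition's double sum as a sum over single sets $A$; second, substitute $A = \mathrm{rm}^{-1}(R)$.

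No genuine obstacle arises, since both the analytic content (concentrated in Proposition \ref{unstraightened-pieri-rule}) and the combinatorial content (concentrated in Lemma \ref{lem:rm}) are already in hand. The only minor bookkeeping point is matching the length parameter $\ell$: the Proposition is stated for $\lambda \in \Par^k_{\ell-k-1}$ while the Corollary is stated for $\lambda \in \Par^k_\ell$, so one takes $\ell$ large enough that the Proposition applies, padding $\lambda$ with zeros if necessary and using Lemma \ref{zero-lemma} together with Remark \ref{re:mismatched-lengths} to see that this padding leaves the Katalan functions unchanged.
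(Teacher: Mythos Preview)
Your proposal is correct and matches the paper's approach exactly: the paper's entire proof is the single sentence ``Combining Proposition~\ref{unstraightened-pieri-rule} and Lemma~\ref{lem:rm} yields the following result,'' and you have simply spelled out what that combination entails. Your remark on reconciling the length parameter $\ell$ (via padding and Remark~\ref{re:mismatched-lengths}) is a useful clarification that the paper leaves implicit.
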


\subsection{Root ideal to core dictionary}\label{sec:ri-to-core-dict}

The diagram of a partition $\lambda$  is the subset of cells $\{(r,c)\in\mathbb Z_{\geq 1}\times
\mathbb Z_{\geq 1} : c\leq \lambda_r \}$ in the plane, drawn in English (matrix-style) notation so that
rows (resp. columns) are increasing from north to south (resp. west to east).
Each cell in a diagram has a \emph{hook length} which counts the
number of cells below it in its column and weakly to its right in its row.
An {\it $n$-core} is a partition with no cell of hook length $n$.
We use $\CC^{k+1}$ to denote the collection of $k+1$-cores.
There is a bijection~\cite{LMtaboncores},
$$
\partition: \CC^{k+1}\to \Par^k\,,
$$
where $\partition(\kappa) = \lambda$ is the partition whose $r$-th row, $\lambda_r$, is
the number of cells in the  $r$-th row of $\kappa$ with hook length $\le k$.
Let $\core=\partition^{-1}$.
The \emph{content} of a cell $(r,c) \in \Z \times \Z$ is $c-r$ and its
{\it $k+1$-residue} is $\ov{c-r} \in \ZZ/(k+1)\ZZ$.

Given a \(k+1\)-core \(\kappa\), define the row residue map
\[
\text{\rm r} \from \Z_{\geq 1} \to \ZZ/(k+1)\ZZ,\ \  a \mapsto \ov{ \kappa_a-a}\,,
\]
so that  $\text{\rm r}(a)$ is the $k+1$-residue of the cell $(a,\kappa_a)$ (if  $a \le \ell(\kappa)$, this lies on the eastern border of $\kappa$ but we also allow  $a > \ell(\kappa)$ and understand  $\kappa_a =0$ in this case).
We use the following lemma, obtained by taking~\cite{catalans}*{Proposition 8.2(b)} modulo \(k+1\).
\begin{lem}\label{row-residue-map-lem}
  Let \(\lambda \in \Par^k_\ell\) and \(\Psi = \Delta^{k}(\lambda)\).
  If \(\up_\Psi(x)\) is defined, then \( \text{\rm r}(\up_\Psi(x)) = \text{\rm r}(x) \).
\end{lem}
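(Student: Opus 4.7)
The plan is to recognize the claim as the residue reduction modulo $k+1$ of an exact integer-valued identity, namely
\[
    \kappa_y - y \;=\; \kappa_x - x + (k+1), \qquad \text{where } y = \up_\Psi(x) \text{ and } \kappa = \core(\lambda).
\]
Reducing both sides modulo $k+1$ and applying the definition $\text{\rm r}(a) = \ov{\kappa_a - a}$ immediately gives $\text{\rm r}(y) = \text{\rm r}(x)$, which is the statement. So the real work is establishing the integer identity, after which the lemma is a one-line consequence.

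To establish that identity, I would first unpack the hypothesis. The condition $\up_\Psi(x) = y$ means $(y,x)$ is a removable root of $\Psi = \Delta^k(\lambda)$; by the defining formula $\Delta^k(\lambda) = \{(i,j) : k - \lambda_i + i < j\}$, removability forces $(y,x)$ to be the leftmost root of row $y$, so that $x = k - \lambda_y + y + 1$, equivalently $x - y = k + 1 - \lambda_y$. I would then translate this into a statement about $\kappa$ via the partition-to-core bijection $\partition$: the defining property that $\lambda_y$ counts the rightmost cells of row $y$ of $\kappa$ with hook length at most $k$, together with the $(k+1)$-core property that no hook length equals $k+1$, gives enough rigidity in the hook-length and row-shape structure of $\kappa$ to deduce the integer identity. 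This is precisely the content of \cite{catalans}*{Proposition 8.2(b)}, which I would invoke directly; the lemma is then a residue reduction in its last line.

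The main obstacle is the hook-length bookkeeping of step two, i.e.\ passing from the numerical relation $x - y = k + 1 - \lambda_y$ to the row-length identity $\kappa_y - \kappa_x = (k+1) - (x - y)$, which rearranges to $(\kappa_y - y) - (\kappa_x - x) = k+1$. Intuitively, the $\lambda_y$ rightmost cells of row $y$ of $\kappa$ have hooks in $[1,k]$, the cell directly to their left (if any) has hook $\geq k+2$, and the $(k+1)$-core condition forces this gap to propagate downward by exactly one column per row until row $x$ is exhausted; this is the calculation that yields the $+(k+1)$ offset. Since \cite{catalans} handles this cleanly, in practice one simply cites Proposition 8.2(b) and reduces modulo $k+1$.
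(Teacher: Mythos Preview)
Your proposal is correct and matches the paper's approach exactly: the paper states this lemma is ``obtained by taking \cite{catalans}*{Proposition 8.2(b)} modulo \(k+1\)'', which is precisely your strategy of invoking the integer content identity from that proposition and reducing modulo $k+1$. Your additional unpacking of removability and the hook-length intuition is fine supplementary detail, but the core argument is the same one-line citation.
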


\begin{prop}\label{k-schur-root-ideal-has-kp1-bounce-paths}
Let \(\lambda \in \Par^k_\ell\) and $\kappa=\core(\lambda)$.
The root ideal \(\Delta^{k}(\lambda)\)  has at most
\(k+1\) distinct bounce paths and cells \((a,\kappa_a)\) and
\((b,\kappa_b)\)  have the same $k+1$-residue if and only
if $a$ and $b$ are in the same bounce path.
\end{prop}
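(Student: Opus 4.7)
My plan is to split the statement into three pieces: the easy direction (rows in the same bounce path share a $(k+1)$-residue), the hard direction (rows with the same residue lie in the same bounce path), and the cardinality bound of $k+1$, observing that the latter two follow together once one identifies bounce paths with the residue classes in $[\ell]$. The easy direction is immediate: consecutive vertices $a$ and $b$ of a bounce path satisfy $b = \down_\Psi(a)$ and hence $a = \up_\Psi(b)$, so Lemma~\ref{row-residue-map-lem} gives $\text{r}(a) = \text{r}(b)$, and induction propagates equality along the whole path.

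For the hard direction I would reduce to a single \emph{Reach} inequality: whenever $a < b$ are rows in $[\ell]$ with $\text{r}(a) = \text{r}(b)$, $\down_\Psi(a)$ is defined and $\down_\Psi(a) \le b$. Granting Reach, if $i_1 < \cdots < i_p$ exhausts the rows in $[\ell]$ of a fixed residue $\rho$, then Reach applied to $i_j < i_{j+1}$ gives $i_j < \down_\Psi(i_j) \le i_{j+1}$, while Lemma~\ref{row-residue-map-lem} forces $\text{r}(\down_\Psi(i_j)) = \rho$; together these pin $\down_\Psi(i_j) = i_{j+1}$. Iterating shows $\{i_1, \ldots, i_p\}$ is exactly the bounce path through $i_1$, and the contrapositive of Reach implies $\down_\Psi(i_p)$ is undefined. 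Combined with the easy direction, this gives a bijection between bounce paths and the residue classes appearing in $[\ell]$, which proves both the bound of $k+1$ and the full "iff".

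The Reach inequality is the heart of the argument and the main obstacle. Unwinding $\Delta^k(\lambda)$ shows that row $a$ has its unique removable root at column $k+1+a-\lambda_a$, so $\down_\Psi(a) = a+k+1-\lambda_a$ whenever this is at most $\ell$. I would then assume for contradiction that $b - a \in [1,\,k-\lambda_a]$ and $\text{r}(a) = \text{r}(b)$. The combinatorial engine is the $(k+1)$-core hypothesis, which forbids hooks equal to $k+1$: since hook lengths strictly decrease along any row, the $\lambda_a$ rightmost cells of row $a$ of $\kappa$ are exactly the cells with hook $\le k$, so cell $(a,\kappa_a-\lambda_a)$ (when $\lambda_a < \kappa_a$; the other case is trivial) has hook $\ge k+2$. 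Expanding that hook length yields $\kappa'_{\kappa_a-\lambda_a} \ge a + k + 1 - \lambda_a$, which forces $\kappa_b \ge \kappa_a - \lambda_a$ throughout the range $b - a \le k - \lambda_a$. Thus $\kappa_a - \kappa_b \in [0, \lambda_a]$, whereas the residue equality $\text{r}(a) = \text{r}(b)$ gives $\kappa_a - \kappa_b \equiv k+1-(b-a) \pmod{k+1}$ with $k+1-(b-a) \in [\lambda_a+1,\,k]$, a value in a residue class disjoint from $[0,\lambda_a]$ modulo $k+1$. The contradiction proves Reach. The subtle point is to see that forbidding a single hook value, $k+1$, delivers exactly the column-height bound $\kappa'_{\kappa_a-\lambda_a} \ge a + k + 1 - \lambda_a$ whose stride matches $\down_\Psi$ precisely.
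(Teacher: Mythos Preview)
Your proof is correct, but it takes a genuinely different route from the paper's. The paper avoids your Reach argument entirely by a padding trick: replace $\lambda$ by $(\lambda,0^{k+1})$ and work with $\Psi = \Delta^k((\lambda,0^{k+1}))$. In this enlarged root ideal, $\down_\Psi(x)$ is defined for every $x\in[\ell]$, so every bounce path terminates in one of the $k+1$ new rows $\ell+1,\dots,\ell+k+1$; since those rows visibly have pairwise distinct residues $\ov{-\ell-1},\dots,\ov{-\ell-k-1}$, Lemma~\ref{row-residue-map-lem} alone forces the bounce paths to coincide with the residue classes, and truncating back to $\Delta^k(\lambda)$ finishes. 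Your approach instead stays inside $\Delta^k(\lambda)$ and extracts the converse direction from an explicit hook-length computation in $\kappa$: forbidding hook $k+1$ at the cell $(a,\kappa_a-\lambda_a)$ gives the column bound $\kappa'_{\kappa_a-\lambda_a}\ge a+k+1-\lambda_a$, which is exactly the stride of $\down_\Psi$ and yields Reach. The paper's argument is shorter and uses no core arithmetic beyond the lemma, but your argument is more self-contained---it does not rely on the fact that appending zeros leaves the bounce structure on $[\ell]$ unchanged, and it makes the role of the $(k+1)$-core condition completely explicit. One small wording issue: the undefinedness of $\down_\Psi(i_p)$ follows directly from Lemma~\ref{row-residue-map-lem} (if $\down_\Psi(i_p)$ existed it would be a row of residue $\rho$ in $[\ell]$ exceeding $i_p$), rather than from the contrapositive of Reach as you say.
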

\begin{proof}
Let \(\Psi = \Delta^{k}(\lambda,0^{k+1})\).
By construction, $\Psi$ has no roots in rows $[\ell+1,\ell+k+1]$,
implying that each of $\ell+1, \ldots,\ell+k+1$ lies in a distinct bounce path, $B_1,\ldots,B_{k+1}$, respectively.
Since $\down_\Psi(x)$ exists for all  $x \in [\ell]$,
$B_1,\ldots, B_{k+1}$ are the {\it only}
bounce paths in \(\Psi\).
Now, for $i\in[k+1]$, the $k+1$-residue of $(\ell+i,\kappa_{\ell+i})$ is
\begin{equation*}
\text{\rm r}(\ell+i)=\ov{\kappa_{\ell+i}-\ell-i}=\ov{0-(\ell+i)}\,.
\end{equation*}
Thus the residues $\text{\rm r}(\ell+1),\ldots,\text{\rm r}(\ell+k+1)$ are distinct and
so, by  Lemma~\ref{row-residue-map-lem},  $\text{\rm r}(a)=\text{\rm r}(i+\ell)$ for all $a\in B_i$.
Therefore, $\text{\rm r}(a)=\text{\rm r}(b)$ if and only if $a$ and $b$ lie in the same bounce path.
Because the bounce path of $x\in[\ell]$ in \(\Delta^{k}(\lambda)\) is a (possibly empty)
truncation of its bounce path in $\Psi$, the claim follows.
\end{proof}

Given a partition $\kappa$, an \emph{addable} $i$-corner is a cell $(r,c) \notin \kappa$ of  $k+1$-residue $i$
such that $\kappa\cup\{(r,c)\}$ is a partition;
a \emph{removable} $i$-corner is a cell $(r,c) \in \kappa$ of  $k+1$-residue $i$
such that $\kappa \setminus \{(r,c)\}$ is a partition.

\begin{prop}[\(K\)-\(k\)-Schur root ideal to core dictionary]\label{root-ideal-to-core-dict}
Let \(\lambda \in \Par^k_j\) with  $\lambda_{j-1} = \lambda_j = 0$. Set
$i = \ov{-j+1}$. Then the bounce paths of  \(\Psi = \Delta^{k}(\lambda)\) are related to the
 $k+1$-core $\kappa=\core(\lambda)$ as follows.
Also, (a)--(c) below hold more generally with
root ideal  \(\Delta^{k}(\lambda+\epsilon_S)\)
in place of $\Psi$, for any  $S \subset \Z_{\ge j}$.
 \begin{enumerate}
\item \(y=\top_\Psi(j-1)>\top_\Psi(j)\) if and only if
the lowest addable  $i$-corner of $\kappa$ lies in row $a=\up_\Psi(y+1)$,

\item
\(  \top_\Psi(j)>\top_\Psi(j-1)+1\)
  if and only if \(\kappa\) has a removable  $i$-corner,

\item
\(\top_\Psi(j) = \top_\Psi(j-1)+1\)
    if and only if
    \(\kappa\) has neither a removable  $i$-corner nor addable  $i$-corner.
\end{enumerate}
\end{prop}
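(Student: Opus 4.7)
The plan is to translate each side of the biconditionals---the bounce-path top comparison inside $\Psi$ and the existence or precise location of $i$-corners of $\kappa$---into arithmetic on the offsets $n_c$ of the flushed $\beta$-number (abacus) description of the $(k+1)$-core $\kappa$; cases (a), (b), (c) then emerge as the trichotomy $n_{i-1} > n_i$, $n_{i-1} < n_i$, $n_{i-1} = n_i$.

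First I would set up the residue side. Since any nonempty row of a $(k+1)$-core contains a cell of hook at most $k$ (otherwise $k+1$ consecutive rows of equal length force a hook equal to $k+1$), the hypothesis $\lambda_{j-1} = \lambda_j = 0$ forces $\kappa_{j-1} = \kappa_j = 0$, and so $\text{\rm r}(j-1) = \ov{-(j-1)} = i$ and $\text{\rm r}(j) = \ov{-j} = i-1$ are distinct. Proposition~\ref{k-schur-root-ideal-has-kp1-bounce-paths} then identifies each bounce path of $\Psi$ with a residue class in $[j]$, giving
\[
u := \top_\Psi(j-1) = \min\{a \in [j] : \text{\rm r}(a) = i\}, \qquad v := \top_\Psi(j) = \min\{a \in [j] : \text{\rm r}(a) = i-1\}\,.
\]

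Next I would invoke the abacus. Setting $\beta_a = \kappa_a - a + N$ for $N \equiv 0 \pmod{k+1}$ large, the $(k+1)$-core condition gives each residue class $B_c = \{\beta_a : \beta_a \equiv c\}$ the flushed form $\{c, c + (k+1), \ldots, c + n_c(k+1)\}$, and the tops $u, v$ correspond to the maxima, so $\beta_u = i + n_i(k+1)$ and $\beta_v = (i-1) + n_{i-1}(k+1)$. Routine abacus bookkeeping then shows that an addable (resp.\ removable) $i$-corner at row $a$ corresponds to an index $m$ of an element of $B_{i-1}$ with $m > n_i$ (resp.\ of $B_i$ with $m > n_{i-1}$), so addable (resp.\ removable) $i$-corners exist iff $n_{i-1} > n_i$ (resp.\ $n_{i-1} < n_i$), and neither iff $n_{i-1} = n_i$. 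In parallel, comparing $\beta_u$ to $\beta_v$ gives $u > v \iff n_{i-1} > n_i$, whereas $v = u + 1$ is equivalent to $\beta_{u+1} = \beta_u - 1$, which together with the $(k+1)$-core bound $\kappa_u - \kappa_{u+1} \le k$ forces $\kappa_u = \kappa_{u+1}$ and $n_{i-1} = n_i$. Matching these two trichotomies delivers (a), (b), (c) up to identification of the row $a$ in case~(a).

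The last identification is the only nontrivial step and is the main obstacle. In case (a), the lowest addable $i$-corner sits at the row $a$ with $\beta_a = (i-1) + (n_i+1)(k+1) = \beta_u + k$; moreover $\beta_u - 1 \in B$ (since $n_i < n_{i-1}$) forces $\beta_{u+1} = \beta_u - 1$ and hence $\kappa_u = \kappa_{u+1}$, so $u + 1$ lies in the residue-$(i-1)$ bounce path with predecessor exactly this $a$, yielding $a = \up_\Psi(u+1)$ provided $(a, u+1)$ is indeed a removable root of $\Psi$, i.e.\ $\lambda_a = k + a - u = \kappa_a - \kappa_u$. I would verify this identity by a direct hook-length computation: the cell $(a, \kappa_u + 1)$ has hook $k - u + \kappa'_{\kappa_u + 1} \le k$ since $\kappa'_{\kappa_u + 1} \le u - 1$, while $(a, \kappa_u)$ has hook $k + 1 - u + \kappa'_{\kappa_u} > k$ since $\kappa'_{\kappa_u} \ge u$; monotonicity of hooks along a row then pinpoints the cells of row $a$ with hook $\le k$ as exactly the rightmost $\kappa_a - \kappa_u$. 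Finally, the extension to $\Delta^k(\lambda + \epsilon_S)$ with $S \subset \Z_{\ge j}$ is essentially free: that root ideal coincides with $\Psi$ in rows $[1, j - 1]$, so the $\up$-edges inside $[1, j]$ and thus $\top(a)$ for $a \in [j]$ are unchanged; since the residue $i$ and the core $\kappa$ depend only on $\lambda$, the three conclusions transfer verbatim.
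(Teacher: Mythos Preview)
Your approach is correct and takes a genuinely different route from the paper. The paper argues directly via row residues: it characterizes addable and removable $i$-corners by the local conditions $\text{r}(z-1) \ne i,\ \text{r}(z) = i-1$ (addable at row $z$) and $\text{r}(z-1) = i,\ \text{r}(z) \ne i-1$ (removable at row $z-1$), and then uses the wall-free/mirror structure of $\Psi$ (Remark~\ref{re:ksri}(\ref{ksri:no-walls})) to show that $\uppath_\Psi(j-1)$ and $\uppath_\Psi(j)$ run as parallel sequences $j-1,\, j_2-1,\, \ldots$ and $j,\, j_2,\, \ldots$ until one of them terminates; the three ways this can happen give the trichotomy, and in case~(a) the next step $a$ of the surviving path is read off immediately. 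You instead pass to the abacus model and reduce everything to the comparison of runner occupancies $n_{i-1}$ versus $n_i$, which is a clean alternative viewpoint standard in core combinatorics though external to the paper's toolkit.

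Two small remarks. First, your hook-length verification that $\lambda_a = \kappa_a - \kappa_u$ tacitly assumes the cell $(a,\kappa_u+1)$ lies in $\kappa$; in the boundary situation $\kappa_a = \kappa_u$ (equivalently $u-a=k$) it does not, but then $\lambda_a = 0$ follows from your inequality at $(a,\kappa_u)$ alone and the conclusion still holds. Second, that hook-length step is in fact redundant: once you have shown that $u+1$ has residue $i-1$ and that $\beta_a$ is the smallest element of $B_{i-1}$ exceeding $\beta_{u+1}$, Proposition~\ref{k-schur-root-ideal-has-kp1-bounce-paths} already gives $a = \up_\Psi(u+1)$, since bounce paths coincide with residue classes and adjacent elements of a bounce path are by definition linked by $\up_\Psi$. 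The paper's argument exploits this directly and avoids the hook-length detour entirely.
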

\begin{proof}
Let  $\text{\rm r}$ be the row residue map of  $\kappa$.
Noting $\text{\rm r}(j) = i-1$ and  $\text{\rm r}(j-1) = i$,  by Proposition~\ref{k-schur-root-ideal-has-kp1-bounce-paths}, the set of row indices   $\{z \in [j] \mid \text{\rm r}(z) = i-1\} = \uppath_\Psi(j)$
and
$\{z \in [j] \mid \text{\rm r}(z) = i\} = \uppath_\Psi(j-1)$.
Using this, we have
\begin{align}
\notag
&\text{$\kappa$ has an addable  $i$-corner in row  $z \in [j]$ \ \,  $\iff$  \ \,  $\text{\rm r}(z-1) \ne i$ and
$\text{\rm r}(z) = i-1$}  \\
\label{e rootidealtocore 1}
& \text{$\iff$  \ \, $z-1 \notin \uppath_\Psi(j-1)$ and $z \in \uppath_\Psi(j)$}\,; \\
\notag
&\text{$\kappa$ has a removable $i$-corner in row  $z-1 \in [j]$ \ \,  $\iff$  \ \,  $\text{\rm r}(z-1) = i$ and  $\text{\rm r}(z) \ne i-1$}
\\
\label{e rootidealtocore 2}
& \text{$\iff$  \ \, $z-1 \in \uppath_\Psi(j-1)$ and $z \notin \uppath_\Psi(j)$.}
\end{align}

For $y=\max\{\top_\Psi(j-1),\top_\Psi(j)-1\}$, since \(j\) and \(j-1\)
cannot be in the same bouncepath, $\Psi$ has a mirror in rows $x,x+1$
for \(x \in \uppath_\Psi(\up_\Psi(j-1))\) such that \(x \geq y\) by Remark~\ref{re:ksri}(\ref{ksri:no-walls}).
Thus, the bounce paths $\uppath_\Psi(j-1)$ and $\uppath_\Psi(j)$ have one of the following forms:
(a) $j-1, j_2-1,j_3-1,\dots,y$ and  $j,j_2,j_3,\dots, y+1,a,\dots$;
(b) $j-1, j_2-1,\dots,y,b,\dots$ and  $j,j_2,\dots, y+1$;
or (c) $j-1, j_2-1,\dots,y$ and  $j,j_2,\dots, y+1$.
The result now follows from \eqref{e rootidealtocore 1}--\eqref{e rootidealtocore 2}.
Note that the more general statement
holds simply because  $\uppath_{\Delta^k(\lambda+\epsilon_S)}(j-1) = \uppath_{\Psi}(j-1)$ and $\uppath_{\Delta^k(\lambda+\epsilon_S)}(j) = \uppath_{\Psi}(j)$.
\end{proof}

\begin{example}
  For \(k=5\) and \(\lambda = 532222111100000\), set \(\Psi =
  \Delta^{5}(\lambda)\) and \(\lowers = \Delta^{6}(\lambda)\). Then,
  \[
    \ytableausetup{boxsize=0.8em,centertableaux,nobaseline}
    \kappa = {\mathfrak c}(\lambda) =
    \begin{ytableau}
    0 & 1 & 2 & 3 & 4 & 5 & 0 & 1 & 2 & 3 & 4 \\
    5 & 0 & 1 & 2 & 3 & 4  \\
    4& 5 & 0  \\
    3& 4& 5  \\
    2& 3& 4 \\
    1& 2& 3 \\
    0 \\
    5 \\
    4\\
    3\\
    \none \\
    \none \\
    \none \\
    \none \\
    \none \\
    \end{ytableau}
    \ \
      \begin{ytableau}
  5 & *(red) & *(red) \bullet & *(red) \bullet & *(red) \bullet & *(red) \bullet & *(red) \bullet & *(red) \bullet & *(red) \bullet & *(red) \bullet & *(red) \bullet & *(red) \bullet & *(red) \bullet & *(red) \bullet & *(red) \bullet \\
  {} & 3 & {} & {} & *(red) & *(red) \bullet & *(red) \bullet & *(red) \bullet & *(red) \bullet & *(red) \bullet & *(red) \bullet & *(red) \bullet & *(red) \bullet & *(red) \bullet & *(red) \bullet \\
  {} & {} & 2 & {} & {} & {} & *(red) & *(red) \bullet & *(red) \bullet & *(red) \bullet & *(red) \bullet & *(red) \bullet & *(red) \bullet & *(red) \bullet & *(red) \bullet \\
  {} & {} & {} & 2 & {} & {} & {} & *(red) & *(red) \bullet & *(red) \bullet & *(red) \bullet & *(red) \bullet & *(red) \bullet & *(red) \bullet & *(red) \bullet \\
  {} & {} & {} & {} & 2 & {} & {} & {} & *(red) & *(red) \bullet & *(red) \bullet & *(red) \bullet & *(red) \bullet & *(red) \bullet & *(red) \bullet \\
  {} & {} & {} & {} & {} & 2 & {} & {} & {} & *(red) & *(red) \bullet & *(red) \bullet & *(red) \bullet & *(red) \bullet & *(red) \bullet \\
  {} & {} & {} & {} & {} & {} & 1 & {} & {} & {} & {} & *(red) & *(red) \bullet & *(red) \bullet & *(red) \bullet \\
  {} & {} & {} & {} & {} & {} & {} & 1 & {} & {} & {} & {} & *(red) & *(red) \bullet & *(red) \bullet \\
  {} & {} & {} & {} & {} & {} & {} & {} & 1 & {} & {} & {} & {} & *(red) & *(red) \bullet \\
  {} & {} & {} & {} & {} & {} & {} & {} & {} & 1 & {} & {} & {} & {} & *(red) \\
  {} & {} & {} & {} & {} & {} & {} & {} & {} & {} & 0 & {} & {} & {} & {} \\
  {} & {} & {} & {} & {} & {} & {} & {} & {} & {} & {} & 0 & {} & {} & {} \\
  {} & {} & {} & {} & {} & {} & {} & {} & {} & {} & {} & {} & 0 & {} & {} \\
  {} & {} & {} & {} & {} & {} & {} & {} & {} & {} & {} & {} & {} & 0 & {} \\
  {} & {} & {} & {} & {} & {} & {} & {} & {} & {} & {} & {} & {} & {} & 0
\end{ytableau}  = K(\Psi;\lowers;\lambda)
    \,,
  \]
  where we have filled the cells of $\kappa$ with their  $k+1$-residues.
  Note that, for example, \(\ov{4}=\text{\rm r}(1)=\text{\rm
    r}(2)=\text{\rm r}(5)=\text{\rm r}(9)=\text{\rm r}(14)\)
  illustrating Lemma~\ref{row-residue-map-lem}. We can
  also observe examples of all three cases of Proposition~\ref{root-ideal-to-core-dict}.
  For (a), let \(j=14\). Then, \(\top_\Psi(14) = 1 < 4 = \top_\Psi(13)\)
  and the lowest addable corner of residue \(i = \ov{-14+1} = \ov{5}\)
  is in row \(2\) of \(\kappa\). For (b), let \(j=15\). Then,
  \(\top_\Psi(15) = 6 > 1+1 = \top_\Psi(14)+1\) and \(\kappa\) has a
  removable corner of residue \(i = \ov{-15+1} = \ov{4}\). Finally,
  for (c), let \(j=13\). Then, \(\top_\Psi(13) = 4 = \top_\Psi(12)+1\)
  and \(\kappa\) has neither a removable nor an addable corner of
  residue \(\ov{-13+1} = \ov{0}\).
\end{example}

\begin{lem}[\cite{LMtaboncores}*{Proposition 22, \S8.1}]\label{le:addweak}
Let  $\kappa$ be a \(k+1\)-core and
\(\lambda = \partition(\kappa)\).
Then  $s_i w_\lambda \in \eS_{k+1}^0$
if and
only if  $\kappa$ has an addable or removable  $i$-corner.
Moreover, \(\kappa\) has an addable \(i\)-corner if and
only if \(s_i w_\lambda = w_{\lambda+\epsilon_a} \in \eS_{k+1}^0\), where  $a$ is the row index of the
lowest addable \(i\)-corner of \(\kappa\).
The  core \(\kappa\) has a removable \(i\)-corner if and
only if \(s_i w_\lambda = w_{\lambda-\epsilon_a} \in \eS_{k+1}^0\),
where  $a$ is the row index of the lowest removable \(i\)-corner of \(\kappa\).
\end{lem}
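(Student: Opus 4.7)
The plan is to factor the bijection $\w \colon \Par^k \to \eS_{k+1}^0$ through cores via $\w = \pi \circ \core$, where $\pi \colon \CC^{k+1} \to \eS_{k+1}^0$ is the classical core-to-word bijection, and transport the statement into a purely combinatorial statement about $(k+1)$-cores.

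First I would recall (and verify as needed) that $\eS_{k+1}$ acts on $\CC^{k+1}$ as follows: $s_i \cdot \kappa$ equals $\kappa$ together with all addable $i$-corners if any exist; $\kappa$ minus all removable $i$-corners if any exist; and $\kappa$ itself otherwise. The mutual exclusivity of the first two cases is a standard consequence of the absence of hook length $k+1$ in a $(k+1)$-core. Under $\pi$, starting from $\emptyset$ and applying a reduced word for $w$ produces the core associated to $w$, so for Grassmannian $w$ we have $s_i w \in \eS_{k+1}^0$ exactly when the action of $s_i$ on $\core(w)$ is non-trivial, i.e.\ exactly when $\kappa$ has an addable or removable $i$-corner. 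This establishes the first assertion.

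For the moreover part, one needs to verify that when $\kappa$ has addable $i$-corners and $a$ denotes the row of the lowest one, setting $\kappa' = s_i \cdot \kappa$ gives $\partition(\kappa') = \lambda + \epsilon_a$. The argument is a row-by-row comparison of $\mu_r$, the number of cells $(r,c) \in \kappa$ with hook length at most $k$, before and after the $s_i$-action. Using that addable $i$-corners of a $(k+1)$-core occur at content values forming an arithmetic progression with common difference $k+1$, one checks that adding all of them simultaneously leaves $\mu_r$ unchanged for every $r \neq a$ (the creation of a short hook in row $r$ being exactly offset by some other cell of row $r$ being bumped past hook length $k$), while in row $a$ no such offsetting occurs and $\mu_a$ increases by $1$. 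The removable case is symmetric, using the highest/lowest removable corner analogously.

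The main obstacle is this row-by-row hook-length bookkeeping. An alternative and perhaps cleaner route is to induct on $\ell(w_\lambda)$ using the explicit reduced word $w_\lambda = (s_{\lambda_\ell-\ell}\cdots s_{-\ell+1})\cdots(s_{\lambda_1-1}\cdots s_0)$ recalled just before Theorem~\ref{t K k Schur basics}: one identifies which prefix cancellations can occur when $s_i$ is prepended, showing that exactly one of the simple reflections in the reduced word is either absorbed (giving the removable case with $\lambda \mapsto \lambda - \epsilon_a$) or extended (giving the addable case with $\lambda \mapsto \lambda + \epsilon_a$), and reads off $a$ from the residue $i$ combined with the cyclic structure. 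This bypasses direct hook-length analysis and reduces the problem to a braid-relation computation in $\eS_{k+1}$.
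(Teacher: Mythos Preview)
The paper does not prove this lemma; it is stated with a citation to \cite{LMtaboncores}*{Proposition 22, \S8.1} and used as a black box. So there is no proof in the paper to compare your proposal against.

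Your sketch is a reasonable outline of how the cited result is established. The factorization $\w = \pi \circ \core$ and the description of the $\eS_{k+1}$-action on $(k+1)$-cores are exactly the tools used in \cite{LMtaboncores}, and the first assertion follows as you say. The row-by-row hook-length bookkeeping you describe for the ``moreover'' part is the crux, and while your account is brief, the mechanism you identify (cells newly gaining short hooks being offset by cells whose hooks cross the threshold $k$, except in the lowest addable row) is correct. Your alternative inductive route via the explicit reduced word is less standard and would require more care with braid moves than you suggest, since prepending $s_i$ to $w_\lambda$ does not in general interact with the reduced word by a single absorption or extension; the core-theoretic argument is cleaner.
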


\subsection{Proof of the vertical Pieri rule}

\begin{prop}\label{prop:band-k-katalans-k-bdd-subspace}
For a root ideal \(\Psi \subset \Delta^+_\ell\),
a multiset \(M\) on  \([\ell]\), and
\(\gamma \in \Z^\ell\) satisfying
maxband$(\Psi,\gamma)~\leq~k$, there holds
\(K(\Psi;M;\gamma) \in \sym_{(k)}\).  Here, maxband is as defined in \eqref{ed maxband}.
\end{prop}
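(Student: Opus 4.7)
The plan is to induct on $|\Delta^+_\ell \setminus \Psi|$, enlarging $\Psi$ by a single addable root at each step via Proposition~\ref{root-expansions}(a) while checking that the maxband hypothesis is preserved under the expansion.

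The base case is $\Psi = \Delta^+_\ell$, where ${\rm nr}(\Psi) \equiv 0$ and the maxband hypothesis reduces to $\gamma_i \leq k$ for all $i$. By Proposition~\ref{def:Kat}, $K(\Delta^+_\ell; M; \gamma) = \prod_{j \in M}(1-L_j)\, k_\gamma$, which expands as a $\ZZ$-linear combination of $k_{\gamma'}$ with $\gamma'_i \leq \gamma_i \leq k$. Since $k_{\gamma'} = k_{\gamma'_1}^{(0)} \cdots k_{\gamma'_\ell}^{(\ell-1)}$ and each factor $k_m^{(r)}$ is a $\ZZ$-linear combination of $h_0, \ldots, h_m$ (and vanishes when $m<0$), every such summand visibly lies in $\sym_{(k)}$.

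For the inductive step, the nonempty lower order ideal $\Delta^+_\ell \setminus \Psi$ contains a maximal element $\beta$, and this $\beta$ is addable to $\Psi$. Proposition~\ref{root-expansions}(a) gives
\[
K(\Psi; M; \gamma) \;=\; K(\Psi \cup \beta; M; \gamma) \;-\; K(\Psi \cup \beta; M; \gamma + \eroot{\beta}),
\]
and both summands have strictly smaller complement, so it suffices to verify that each continues to satisfy the maxband hypothesis. Passing from $\Psi$ to $\Psi \cup \beta$ simply decreases ${\rm nr}(\cdot)_{\beta_1}$ by one and leaves all other rows unchanged, so the first term is immediate. For the second term, the shift $\eroot{\beta} = \epsilon_{\beta_1} - \epsilon_{\beta_2}$ exactly cancels this drop in row $\beta_1$, while strictly lowering the maxband contribution in row $\beta_2$, so maxband~$\leq k$ is preserved there as well.

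I foresee no serious obstacle: this is a clean structural induction driven by root expansion, and the only subtle point is tracking the cancellation in row $\beta_1$ where the $+1$ in $\eroot{\beta}$ and the $-1$ in ${\rm nr}(\Psi \cup \beta)_{\beta_1}$ must line up exactly for the hypothesis to carry through.
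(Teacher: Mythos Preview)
Your proof is correct, but it takes a genuinely different route from the paper's. The paper expands the product over $M$ first, writing
\[
K(\Psi;M;\gamma) = \sum_{A \subset M} (-1)^{|A|}\, H(\Psi;\gamma - \epsilon_A),
\]
observes that each $\gamma - \epsilon_A$ still satisfies the maxband bound (since subtracting $\epsilon_A$ only lowers coordinates), and then invokes \cite{ksplitcatalans}*{Proposition~1.4} to conclude that each Catalan function $H(\Psi;\gamma-\epsilon_A)$ lies in $\sym_{(k)}$. You instead expand on the root ideal side, inducting on $|\Delta^+_\ell \setminus \Psi|$ via Proposition~\ref{root-expansions}(a) until $\Psi = \Delta^+_\ell$, where the base case is verified directly from the definition of $k_m^{(r)}$. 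Your argument is self-contained and avoids the external citation entirely; the paper's is shorter but leans on the previously established Catalan-function result. Both are clean, and in fact your induction essentially reproves the cited proposition from \cite{ksplitcatalans} along the way, in the inhomogeneous setting.
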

\begin{proof}
  Consider that, by definition, \[
    K(\Psi;M;\gamma) = \sum_{A \subset M} (-1)^{|A|} H(\Psi;\gamma - \epsilon_A)\,,
  \]
  where the summation is over all sub-multisets \(A\) of \(M\). Since maxband\((\Psi,\gamma-\epsilon_A) \leq k\),
   each summand
  \(H(\Psi;\gamma-\epsilon_A) \in \sym_{(k)}\) by~\cite{ksplitcatalans}*{Proposition 1.4}.
\end{proof}

\begin{prop}
\label{prop:triangular}
The set \(\{\g_\lambda^{(k)}\}_{\lambda \in \Par^k}\) forms a basis for \(\sym_{(k)}\).  Moreover,
it is unitriangularly related to the  $k$-Schur basis, i.e.,
  \(\g_\lambda^{(k)} = s_\lambda^{(k)} + \sum_{|\mu|<|\lambda|}
a_{\lambda \mu} s_\mu^{(k)}\) for \(a_{\lambda \mu} \in \Z\).
\end{prop}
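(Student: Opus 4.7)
The plan is to exploit the alternating-sum expansion of Katalan functions into Catalan functions (exactly the identity used in the proof of Proposition~\ref{prop:band-k-katalans-k-bdd-subspace}) to identify the top-degree homogeneous component of $\g_\lambda^{(k)}$ with the Catalan function description of $s_\lambda^{(k)}$, and then harvest both the unitriangularity and the basis property from this degree-triangularity.

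First I would write, for $\lambda \in \Par^k$,
\[
\g_\lambda^{(k)} \;=\; K(\Delta^k(\lambda);\Delta^{k+1}(\lambda);\lambda) \;=\; \sum_{A \subset L(\Delta^{k+1}(\lambda))} (-1)^{|A|}\, H(\Delta^k(\lambda);\,\lambda - \epsilon_A),
\]
where the sum ranges over sub-multisets $A$; this follows by expanding the factor $\prod_{j \in M}(1 - L_j)$ in Definition~\ref{def:Kat-g} and then using that $K(\Psi;\Delta^+_\ell;\gamma)=H(\Psi;\gamma)$ (Proposition~\ref{prop:extremal}(a)). The $A = \emptyset$ summand is precisely the Catalan function $H(\Delta^k(\lambda);\lambda) = s_\lambda^{(k)}$, by the identification of $k$-Schur functions as Catalan functions recalled at the start of \S\ref{ss raising op KkSchur}. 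Every other summand is homogeneous of degree $|\lambda|-|A| < |\lambda|$, which is the source of the desired degree triangularity.

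The key technical step is then to verify that each $H(\Delta^k(\lambda);\lambda-\epsilon_A)$ lies in $\sym_{(k)}$. By Proposition~\ref{prop:band-k-katalans-k-bdd-subspace} (equivalently \cite{ksplitcatalans}*{Proposition 1.4}) it suffices to check $\mathrm{maxband}(\Delta^k(\lambda),\lambda-\epsilon_A) \leq k$. A short case analysis on row $i$, comparing $k-\lambda_i$ against $\ell-i$, yields $\lambda_i + \mathrm{nr}(\Delta^k(\lambda))_i \le k$ in either case, so $\mathrm{maxband}(\Delta^k(\lambda),\lambda) \leq k$; subtracting the nonnegative vector $\epsilon_A$ only decreases maxband. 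This is the step I expect to be the main (though still routine) obstacle, since one must be a bit careful to cover both the ceiling regime ($\lambda_i+\mathrm{nr}_i = k$) and the wall regime ($\lambda_i + \mathrm{nr}_i = \lambda_i+\ell-i < k$).

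With each summand now in $\sym_{(k)}$, and $\{s_\mu^{(k)}\}_{\mu \in \Par^k}$ a known $\Z$-basis of $\sym_{(k)}$, each $H(\Delta^k(\lambda);\lambda-\epsilon_A)$ with $A \neq \emptyset$ expands uniquely as a $\Z$-combination of $s_\mu^{(k)}$'s of degree strictly less than $|\lambda|$. Summing these expansions with the leading $s_\lambda^{(k)}$ term gives
\[
\g_\lambda^{(k)} \;=\; s_\lambda^{(k)} + \sum_{|\mu|<|\lambda|} a_{\lambda\mu}\, s_\mu^{(k)}, \qquad a_{\lambda\mu}\in\Z,
\]
which is the claimed unitriangular relation. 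Since this transition matrix is unitriangular with respect to any total order on $\Par^k$ refining the partial order by $|\lambda|$, it is invertible over $\Z$, so $\{\g_\lambda^{(k)}\}_{\lambda \in \Par^k}$ is a $\Z$-basis of $\sym_{(k)}$.
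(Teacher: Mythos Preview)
Your strategy matches the paper's: isolate the top homogeneous component as $s_\lambda^{(k)}$, show membership in $\sym_{(k)}$, and deduce unitriangularity. The paper's proof simply cites Proposition~\ref{prop:band-k-katalans-k-bdd-subspace} for membership and observes that the highest degree part of $K(\Psi;M;\gamma)$ is $H(\Psi;\gamma)$; your version unpacks this more explicitly and re-verifies the maxband bound by hand.

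However, the expansion you write,
\[
K(\Psi;M;\gamma) \;=\; \sum_{A \subset M} (-1)^{|A|}\, H(\Psi;\gamma-\epsilon_A),
\]
is not correct, and your justification for it does not go through: expanding $\prod_{j\in M}(1-L_j)$ in Definition~\ref{def:Kat-g} yields $\sum_{A}(-1)^{|A|}K(\Psi;\emptyset;\gamma-\epsilon_A)$, and $K(\Psi;\emptyset;\cdot)$ is not $H(\Psi;\cdot)=K(\Psi;\Delta^+_\ell;\cdot)$ in general. Already for $\ell=2$, $\Psi=M=\emptyset$ one has $K(\emptyset;\emptyset;(1,1))=g_{11}=s_{11}+h_1\ne s_{11}=H(\emptyset;(1,1))$. (The same displayed identity appears verbatim in the paper's proof of Proposition~\ref{prop:band-k-katalans-k-bdd-subspace}, so the confusion is understandable; its literal truth is not actually used there either.)

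The repair is minor and brings you exactly to the paper's argument: all you need is that the top homogeneous component of $K(\Psi;M;\gamma)$ is $H(\Psi;\gamma)$, which is immediate from Proposition~\ref{def:Kat} since $k_\gamma=h_\gamma+(\text{lower degree})$ and each $L_j$ strictly lowers degree. Your maxband computation $\lambda_i+\mathrm{nr}(\Delta^k(\lambda))_i=\min(k,\lambda_i+\ell-i)\le k$ is correct and, combined with the \emph{statement} of Proposition~\ref{prop:band-k-katalans-k-bdd-subspace}, already gives $\g_\lambda^{(k)}\in\sym_{(k)}$; there is no need to decompose into $H$'s first.
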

\begin{proof}
By Proposition~\ref{prop:band-k-katalans-k-bdd-subspace}, \(\g_\lambda^{(k)}\)
lies in \(\sym_{(k)}\) and so can be written in terms of the $k$-Schur basis of $\Lambda_{(k)}$;
this expansion has the stated form since the highest degree term of \(K(\Psi;M;\gamma)\) is \(H(\Psi;\gamma)\)
irrespective of $M$.  Hence the transition matrix from \(\{\g_\lambda^{(k)}\}\) to  $\{s_\mu^{(k)}\}$ is unitriangular and thus the former is a basis.
\end{proof}

Recall from Section~\ref{section 2} that  $w_\lambda\in\eS^0_{k+1}$ is the minimal coset representative corresponding to  $\lambda\in \Par^k$.
For any $\lambda\in \Par^k$, set \(\g_{w_\lambda}^{(k)} = \g_\lambda^{(k)}\),
so that the basis  $\{\g_\lambda^{(k)}\}_{\lambda \in \Par^k}$ can also be written
$\{\g_v^{(k)}\}_{v \in \eS^0_{k+1}}$.
Recall that $H_{k+1}$ denotes the \(0\)-Hecke algebra of  $\eS_{k+1}$ with generators  $\{T_i \mid i \in \{0,1,\dots, k\}\}$.

\begin{prop}\label{prop:hecke-action}
    The rule
  \begin{align}\label{e siong}
    T_i \cdot \g_{v}^{(k)} =
    \begin{cases}
      \g_{s_i v}^{(k)} & \ell(s_i v) > \ell(v) \text{ and } s_iv \in
      \eS_{k+1}^0\,, \\
      -\g^{(k)}_{v} & \ell(s_i v) < \ell(v)\,, \\
      0 & s_i v \not \in \eS_{k+1}^0\,,
    \end{cases}
\end{align}
for  $i \in \{0,1,\dots, k\}$ and  $v \in \eS^0_{k+1}$,
determines an action of $H_{k+1}$ on \(\sym_{(k)}\).
\end{prop}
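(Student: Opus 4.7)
The plan is to realize the rule \eqref{e siong} as the transport, via the basis bijection of Proposition~\ref{prop:triangular}, of a standard induced $H_{k+1}$-module. Let $H_J := \langle T_1, \ldots, T_k \rangle \subset H_{k+1}$ denote the parabolic subalgebra corresponding to $S_{k+1}$. Since $T_j \mapsto 0$ for all $j \in [k]$ visibly satisfies every defining relation of $H_J$ (namely $T_j^2 = -T_j$ together with the braid and commutation relations), the rank-one left $H_J$-module $\ZZ_0$ on which each $T_j$ acts as zero is well-defined, and I would form the induced left $H_{k+1}$-module $M := H_{k+1} \otimes_{H_J} \ZZ_0$.

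I would then identify a $\ZZ$-basis of $M$ using the parabolic coset decomposition of Coxeter groups: every $w \in \eS_{k+1}$ factors uniquely as $w = v u$ with $v \in \eS_{k+1}^0$, $u \in S_{k+1}$, and $\ell(w) = \ell(v) + \ell(u)$. Length additivity then gives $T_w = T_v T_u$ in $H_{k+1}$, exhibiting $H_{k+1}$ as a free right $H_J$-module on $\{T_v\}_{v \in \eS_{k+1}^0}$, and hence $M$ has $\ZZ$-basis $\{T_v \otimes 1\}_{v \in \eS_{k+1}^0}$. Along the way I would record that $T_u \cdot 1 = 0$ in $\ZZ_0$ for every $u \in S_{k+1} \setminus \{e\}$, since any reduced word for such $u$ expresses $T_u$ as a product of $T_j$'s with $j \in [k]$ whose rightmost factor annihilates $1$.

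The remaining step is to compute the left action of $T_i$ on a basis vector and compare with \eqref{e siong}. When $\ell(s_i v) < \ell(v)$, the $0$-Hecke multiplication yields $T_i T_v = -T_v$, hence $T_i \cdot (T_v \otimes 1) = -T_v \otimes 1$. When $\ell(s_i v) > \ell(v)$, we have $T_i T_v = T_{s_i v}$; writing the parabolic decomposition $s_i v = v' u'$ and using $T_{s_i v} = T_{v'} T_{u'}$, one obtains $T_i \cdot (T_v \otimes 1) = T_{v'} \otimes T_{u'} \cdot 1$. If $s_i v \in \eS_{k+1}^0$, uniqueness of the decomposition forces $v' = s_i v$ and $u' = e$, yielding $T_{s_i v} \otimes 1$; otherwise $u' \neq e$ and the tensor vanishes. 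These three outcomes match \eqref{e siong} exactly, so by Proposition~\ref{prop:triangular} the $\ZZ$-linear isomorphism $M \to \sym_{(k)}$ determined by $T_v \otimes 1 \mapsto \g_v^{(k)}$ intertwines the canonical $H_{k+1}$-action on $M$ with \eqref{e siong}, proving that the rule defines a genuine $H_{k+1}$-action. The only substantive point requiring care is keeping the parabolic bookkeeping straight: one must verify that the decomposition of $s_i v$ produces precisely the three cases of \eqref{e siong}, with the ``$s_i v \notin \eS_{k+1}^0$'' case collapsing to $0$ in $M$ thanks to $T_{u'} \cdot 1 = 0$ for any nontrivial $u' \in S_{k+1}$.
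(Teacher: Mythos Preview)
Your proof is correct and follows essentially the same strategy as the paper's. The only difference is presentational: the paper realizes the module concretely as the cyclic left ideal $M = H_{k+1}\, e$ generated by the parabolic symmetrizer $e = \sum_{w \in S_{k+1}} T_w$ (which satisfies $T_j e = 0$ for $j \in [k]$), whereas you use the equivalent induced-module description $H_{k+1} \otimes_{H_J} \ZZ_0$; the map $h \otimes 1 \mapsto h e$ identifies the two, and the computation of the $T_i$-action on the basis $\{T_v e\}$ (resp.\ $\{T_v \otimes 1\}$) is the same three-case analysis in both versions.
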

Note that the three cases are mutually exclusive since  $\ell(s_i v) < \ell(v)$ implies
$s_i v \in \eS_{k+1}^0$.
\begin{proof}
  Consider \(e = \sum_{w \in S_{k+1}} T_w \in H_{k+1}\) and note
  that for \(i \in [k]\), \(T_i e = 0\) and so \(T_u e = 0\) for any
  \(u \in \eS_{k+1} \setminus \eS_{k+1}^0\).
  Recalling that $\{T_w\}_{w \in \eS_{k+1}}$ is a  $\Z$-basis of  $H_{k+1}$,
  it follows that the left module \(M = H_{k+1} e \) has \(\Z\)-basis \(\{T_v e\}_{v \in \eS_{k+1}^0}\).
  We then check that the
  \(\Z\)-linear map \(M \to \sym_{(k)}\) given by \(T_v e \mapsto \g_v^{(k)}\) is an  $H_{k+1}$-module isomorphism
  by computing
  \[ T_i \cdot T_v e= T_iT_v e
    =\begin{cases}
      T_{s_i v}e    &
      \ell(s_i v) > \ell(v) \text{ and } s_iv \in \eS_{k+1}^0\,,\\
      -T_ve   & \ell(s_i v) < \ell(v)\,,  \\
      T_{s_i v}e = 0 & \ell(s_i v) > \ell(v) \text{ and } s_i v \not\in \eS_{k+1}^0\,.
   \end{cases}
  \]
\end{proof}

\begin{lem}\label{si-operator-thm}
For $\lambda\in\Par^k_m$, $0\leq r\leq k$, and $S=\{a_1<a_2<\cdots<a_r\}\subset \ZZ_{\ge m+2}$ with $a_r-a_1 \le k-1$,
 \[
    K(\Delta^{k}(\mu); L(\Delta^{k+1}(\mu)) \dunion S;
    \mu) = T_{i_r} \cdots T_{i_1} \g_{w_\lambda}^{(k)}\,,
  \]
where $\mu=\lambda+\epsilon_S$ and \(i_z := \ov{-a_z+1} \)
for $z \in [r]$.
\end{lem}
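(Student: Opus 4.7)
The plan is to proceed by induction on $r = |S|$. The base case $r = 0$ is immediate: $S = \emptyset$, $\mu = \lambda$, and the claim reduces to the definition $\g_\lambda^{(k)} = \g_{w_\lambda}^{(k)}$.

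For the inductive step with $r \ge 1$, I would apply Proposition~\ref{pieri-straightening} to the left-hand side with $j = a_1 = \min(S)$; the hypotheses hold by the standing assumptions on $\lambda$ and $S$. This produces a trichotomy for $\Psi = \Delta^k(\mu)$ based on the comparison between $\top_\Psi(a_1-1)$ and $\top_\Psi(a_1)$. The critical idea is to match this trichotomy to the three cases of the Hecke action in Proposition~\ref{prop:hecke-action} computing $T_{i_1} \g_{w_\lambda}^{(k)}$. The bridge is a two-step translation: first apply Proposition~\ref{root-ideal-to-core-dict} to $\lambda$ padded with zeros to length $a_1$ (so the proposition's $j$ is our $a_1$ and its residue is $i_1$) to convert the root-ideal conditions into statements about addable and removable $i_1$-corners of $\kappa = \core(\lambda)$; then apply Lemma~\ref{le:addweak} to convert these corner conditions into statements about $s_{i_1} w_\lambda$. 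The resulting dictionary is: case (a) of Pieri straightening corresponds to $\kappa$ having an addable $i_1$-corner in row $a = \up_\Psi(y+1)$ and to $T_{i_1} \g_{w_\lambda}^{(k)} = \g_{w_{\lambda+\epsilon_a}}^{(k)}$; case (b) to a removable $i_1$-corner and $T_{i_1} \g_{w_\lambda}^{(k)} = -\g_{w_\lambda}^{(k)}$; case (c) to neither and $T_{i_1} \g_{w_\lambda}^{(k)} = 0$.

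With this dictionary, the three cases close the induction cleanly. In case (a), the Pieri output $K(\Delta^k(\nu); L(\Delta^{k+1}(\nu)) \dunion (S \setminus a_1); \nu)$ has $\nu = (\lambda + \epsilon_a) + \epsilon_{S \setminus a_1}$, which matches the lemma's form for the new base partition $\lambda' = \lambda + \epsilon_a$ and the shorter set $S' = S \setminus a_1$; the induction hypothesis then yields $T_{i_r} \cdots T_{i_2} \g_{w_{\lambda'}}^{(k)} = T_{i_r} \cdots T_{i_1} \g_{w_\lambda}^{(k)}$. In case (b), after using Remark~\ref{re:ksri}(d) to identify $\Psi$ and $M$ with $\Delta^k(\mu - \epsilon_{a_1})$ and $L(\Delta^{k+1}(\mu - \epsilon_{a_1}))$ respectively, the Pieri output $-K(\Psi; M \dunion (S \setminus a_1); \mu - \epsilon_{a_1})$ matches the lemma for the original $\lambda$ and the shorter set $S \setminus a_1$, giving $-T_{i_r} \cdots T_{i_2} \g_{w_\lambda}^{(k)} = T_{i_r} \cdots T_{i_2} T_{i_1} \g_{w_\lambda}^{(k)}$. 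In case (c), both sides vanish.

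The main obstacle is the technical bookkeeping in case (a): verifying that the new data $(\lambda', S')$ satisfies the lemma's hypotheses. The key observation is that an addable $i_1$-corner in row $a$ forces $a \le m+1$ (otherwise $\lambda + \epsilon_a$ is not a partition, contradicting $w_{\lambda+\epsilon_a} \in \eS_{k+1}^0$ from Lemma~\ref{le:addweak}), so $m' := \ell(\lambda') \le m+1$; then $\min(S') = a_2 \ge a_1 + 1 \ge m + 3 \ge m' + 2$ gives $S' \subset \Z_{\ge m'+2}$, while $\max(S') - \min(S') \le k-1$ is inherited from $S$. Once these checks are in place, the argument is a clean synthesis of Pieri straightening, the root-ideal-to-core dictionary, and the Hecke module structure on $\sym_{(k)}$.
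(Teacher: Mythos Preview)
Your proposal is correct and follows essentially the same approach as the paper's proof: induction on $|S|$, applying Proposition~\ref{pieri-straightening} at $j=a_1$, and matching its trichotomy to the three cases of the Hecke action via Proposition~\ref{root-ideal-to-core-dict} and Lemma~\ref{le:addweak}. Your explicit verification in case~(a) that the new data $(\lambda+\epsilon_a, S\setminus\{a_1\})$ satisfies the inductive hypotheses is a detail the paper leaves implicit, but otherwise the arguments coincide.
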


\begin{proof}
If $|S|=0$, then the claim holds by definition of $\g_{w_\lambda}^{(k)}$.
Proceed by induction, with \(|S| = r>0\).
Set $\kappa=\core(\lambda)$, $\Psi=\Delta^{k}(\mu)$, and $M = L(\Delta^{k+1}(\mu))$.
Let \(j= a_1 = \min (S)\), and note \(i_1 = \ov{-j+1}\).

First suppose \(y =\top_\Psi(j-1) > \top_\Psi(j)\). Then Proposition~\ref{pieri-straightening}
implies
\begin{align*}
      K(\Psi; M \dunion S; \mu)
      & = K(\Delta^{k}(\nu); L(\Delta^{k+1}(\nu)) \dunion (S \setminus j); \nu) \,,
\end{align*}
for \(\nu := \mu+\epsilon_a - \epsilon_j\), where \(a = {\up_\Psi(y+1)}\).
Since \(\nu= (\lambda+\epsilon_a) +\epsilon_{S\backslash \{j\}}\), induction gives
\[
 K(\Delta^{k}(\nu); L(\Delta^{k+1}(\nu)) \dunion (S \setminus j); \nu)
       = T_{i_r} \cdots T_{i_2} \g_{\lambda+\epsilon_a}^{(k)}\,.
\]
By Proposition~\ref{root-ideal-to-core-dict}(a),
the lowest addable $i_1$-corner of $\kappa$ lies in row $a$.
Therefore, $w_{\lambda+\epsilon_a} = s_{i_1} w_\lambda$ by Lemma~\ref{le:addweak}.
Then, by Proposition~\ref{prop:hecke-action} and
the fact that
$\ell(w_\lambda) = |\lambda|$, we have
$ \g_{\lambda+\epsilon_a}^{(k)}
= \g_{s_{i_1} w_\lambda}^{(k)} = T_{i_1} \g_\lambda^{(k)}$.

Next suppose \(\top_\Psi(j) > \top_\Psi(j-1)+1\). Proposition~\ref{pieri-straightening} yields
\[
K(\Psi; M \dunion S; \mu)
=
-K(\Psi; M \dunion (S \setminus \{j\}); \lambda+ \epsilon_{S \setminus \{j\}}).
\]
Rewriting using Remark~\ref{re:ksri}(\ref{ksri:last-part-flexible}) with  $\nu = \lambda+ \epsilon_{S \setminus \{j\}}$, and then applying induction yields
\[
-K(\Psi;M \dunion (S \setminus \{j\}); \lambda+ \epsilon_{S \setminus \{j\}})
= -K(\Delta^{k}(\nu); L(\Delta^{k+1}(\nu)) \dunion (S \setminus \{j\}); \nu)
 = -T_{i_r} \cdots T_{i_2} \g_{\lambda}^{(k)}\,.
\]
By Proposition~\ref{root-ideal-to-core-dict}(b), $\kappa$ has a removable \(i_1\)-corner, so
$-\g_\lambda^{(k)} =T_{i_1} \g_\lambda^{(k)}$ by Lemma~\ref{le:addweak} and Proposition~\ref{prop:hecke-action}.

Finally, suppose \(\top_\Psi(j) = \top_\Psi(j-1)+1\).  Proposition~\ref{pieri-straightening} yields
\(K(\Psi; M \dunion S; \mu) = 0\).
By Proposition~\ref{root-ideal-to-core-dict}(c), $\kappa$ has neither an addable nor a removable \(i_1\)-corner,
so
$T_{i_r} \cdots T_{i_2} T_{i_1} \g_\lambda^{(k)}=T_{i_r} \cdots T_{i_2} \big(T_{i_1} \g_\lambda^{(k)}\big)
=0$ by Lemma~\ref{le:addweak} and Proposition~\ref{prop:hecke-action}.
\end{proof}

We can now complete the proof of Theorem~\ref{formulations-are-equal} by showing the  $\g_\lambda^{(k)}$ satisfy the Pieri rule \eqref{gpieri}.
\begin{thm}\label{thm:katalan-kschur-pieri}
  For \(0 \leq r \leq k\) and \(\lambda \in \Par^k\),
  \[
    g_{1^r} \g_\lambda^{(k)} =
    \sum_{\substack{u \in \eS_{k+1} \text{ cyclically increasing} \\ \ell(u) = r
    \\ T_u T_{w_\lambda} = \pm T_w ; \, w \in \tilde{S}_{k+1}^0}}
    (-1)^{\ell(w_\lambda)+r-\ell(w)}\g_{w}^{(k)}\,.
  \]
\end{thm}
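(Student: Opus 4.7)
The plan is to combine three previously established pieces: Corollary~\ref{unstraightened-pieri-rewrite}, which expresses $g_{1^r}\g_\lambda^{(k)}$ as a sum of Katalan functions indexed by size-$r$ subsets $R \subset \mathbb Z/(k+1)\mathbb Z$; Lemma~\ref{si-operator-thm}, which rewrites each such Katalan function as a product of $0$-Hecke generators acting on $\g_{w_\lambda}^{(k)}$; and the $H_{k+1}$-module isomorphism $\phi : H_{k+1} e \to \sym_{(k)}$, $T_v e \mapsto \g_v^{(k)}$, from the proof of Proposition~\ref{prop:hecke-action}, which converts the Hecke action on $\g_{w_\lambda}^{(k)}$ into the Hecke-theoretic form on the theorem's right-hand side. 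Concretely, after padding $\lambda$ with zeros so that $\lambda \in \Par^k_m$ for $m$ large enough (using Lemma~\ref{zero-lemma}) and choosing the parameter in Lemma~\ref{lem:rm} so that each $A = {\rm rm}^{-1}(R)$ satisfies $A \subset \mathbb Z_{\ge m+2}$ (the remaining hypothesis $\max(A) - \min(A) \le k-1$ holds automatically from the form of rm's domain), Corollary~\ref{unstraightened-pieri-rewrite} together with Lemma~\ref{si-operator-thm} gives
\begin{equation}
\label{e plan sum}
g_{1^r}\,\g_\lambda^{(k)} \;=\; \sum_{\substack{R \subset \mathbb Z/(k+1)\mathbb Z\\ |R|=r}} T_{i_r} T_{i_{r-1}} \cdots T_{i_1}\, \g_{w_\lambda}^{(k)},
\end{equation}
where ${\rm rm}^{-1}(R) = \{a_1 < \cdots < a_r\}$ and $i_z := \ov{-a_z + 1}$.

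The central combinatorial step is to identify $T_{i_r}\cdots T_{i_1}$ with $T_{u_R}$ for the unique cyclically increasing element $u_R \in \eS_{k+1}$ of length $r$ having support $\{i_1,\ldots,i_r\}$. The residues $i_1,\ldots,i_r$ are pairwise distinct in $\mathbb Z/(k+1)\mathbb Z$ because $a_r - a_1 \le k-1$. If both $i = i_z$ and $i+1 = i_{z'}$ lie in the support, then $a_z \equiv a_{z'}+1 \pmod{k+1}$, and the strict bound $a_r - a_1 \le k-1 < k+1$ forces $a_z = a_{z'}+1$ exactly, so $z > z'$. Reading $s_{i_r}s_{i_{r-1}}\cdots s_{i_1}$ left-to-right places $i_z$ at position $r-z+1$, so $z > z'$ places $i = i_z$ strictly west of $i+1 = i_{z'}$, which is exactly the cyclically increasing condition. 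Since cyclically increasing words in $\eS_{k+1}$ are reduced and each cyclically increasing element is determined by its support, $T_{i_r}\cdots T_{i_1} = T_{u_R}$, and the correspondence $R \mapsto u_R$ is a bijection between size-$r$ subsets of $\mathbb Z/(k+1)\mathbb Z$ and cyclically increasing elements of length $r$ in $\eS_{k+1}$.

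Finally, apply $\phi$: standard $0$-Hecke arithmetic yields $T_{u_R} T_{w_\lambda} = (-1)^{r+\ell(w_\lambda)-\ell(w)} T_w$ where $w = u_R * w_\lambda$ is the Demazure product, and since $T_i e = 0$ for $i \in [k]$ one has $\phi(T_w e) = \g_w^{(k)}$ when $w \in \eS_{k+1}^0$ and $\phi(T_w e) = 0$ otherwise. Substituting into \eqref{e plan sum} produces exactly the theorem's right-hand side, indexed by those cyclically increasing $u_R$ of length $r$ for which $T_{u_R} T_{w_\lambda} = \pm T_w$ with $w \in \eS_{k+1}^0$. The main obstacle is the combinatorial identification in the middle paragraph: one must verify that the particular ordering of Hecke generators produced by Lemma~\ref{si-operator-thm} together with the strict bound $a_r-a_1\le k-1$ yields a cyclically increasing reduced word, and one must reconcile the shift-by-one discrepancy between the rm parametrization (using $\ov{-a}$) and Lemma~\ref{si-operator-thm}'s index convention (using $\ov{-a+1}$) to confirm the resulting bijection is onto. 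The remaining Hecke-algebra translation is routine given Proposition~\ref{prop:hecke-action}.
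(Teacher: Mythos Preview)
Your proposal is correct and follows essentially the same approach as the paper: invoke Corollary~\ref{unstraightened-pieri-rewrite}, apply Lemma~\ref{si-operator-thm} to each summand, and finish with the $H_{k+1}$-module structure of Proposition~\ref{prop:hecke-action}. You supply more detail than the paper does in two places---the verification that $s_{i_r}\cdots s_{i_1}$ is a reduced cyclically increasing word (hence $T_{i_r}\cdots T_{i_1}=T_{u_R}$) and the translation of $T_{u_R}\g_{w_\lambda}^{(k)}$ into the sign-weighted sum via the Demazure product and the vanishing $T_w e = 0$ for $w\notin \eS_{k+1}^0$---but these are exactly the omitted steps behind the paper's terse ``and then using Proposition~\ref{prop:hecke-action}.''
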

\begin{proof}
Corollary~\ref{unstraightened-pieri-rewrite} gives
$$
    g_{1^r} \g_\lambda^{(k)}
     = \sum_{\substack{R \subset \Z/(k+1)\Z \\ |R|=r}} K(\Delta^{k}(\mu); L(\Delta^{k+1}(\mu)) \dunion A; \mu)\,,
$$
where $\mu=\lambda+\epsilon_A$ for $A=\text{rm}^{-1}(R)$.
The result then follows by applying Lemma~\ref{si-operator-thm} to each
summand to get
$$
    g_{1^r} \g_\lambda^{(k)}
     = \sum_{s_{i_r} \cdots  s_{i_1} \text{ cyclically increasing}} T_{i_r} \cdots T_{i_1} \g_{w_\lambda}^{(k)}
$$
and then using Proposition~\ref{prop:hecke-action}.%
\end{proof}

\section{Appendix}\label{appendix}
\subsection{Raising operator identity for dual stable Grothendieck polynomials:
proof of~\eqref{eq:dual-grothendieck-jacobi-trudi}}\label{proof-of-dual-grothendieck-jacobi-trudi}
  By the proof of~\cite{macdonald}*{I. (3.4'')}, the following identity holds in the ring \(\mathbb{A} = \Z\llbracket \frac{z_1}{z_2}, \ldots,  \frac{z_{\ell-1}}{z_\ell}\rrbracket [z_1, \ldots, z_\ell]\),
  \[
    \sum_{w \in S_\ell} (-1)^{\ell(w)} \mathbf{z}^{\gamma + \rho - w \rho} =
    \prod_{i < j} \Big(1 - \frac{z_i}{z_j}\Big) \mathbf{z}^\gamma
  \]
  where \(\rho = (\ell-1,\ell-2,\ldots,1,0)\).  Applying the map
  \(\kappa\) from \eqref{eq:formal2} then
  yields  $ \prod_{i < j} \big(1 - R_{ij}\big) k_\gamma$ on the right and
  $\det(k_{\gamma_i+\rho_i - \rho_j}^{(i-1)})_{1 \leq i,j \leq \ell} = \det(k_{\gamma_i+j-i}^{(i-1)})_{1 \leq i,j \leq \ell} = g_\gamma$ on the left, thus establishing \eqref{eq:dual-grothendieck-jacobi-trudi}.

\subsection{Proof of \(G_{1^m}^{(k)} = G_{1^m}\)}\label{sec:GisGk}
By~\cite{lss}*{\S7.4}, the coefficient of the
monomial symmetric function
\(m_\mu\) in \(G_{\lambda}^{(k)}\) for \(\lambda \in \Par^k\) and \(\mu\) a partition of length \(a=\ell(\mu)\)
 is equal to  $(-1)^{|\mu|-|\lambda|}$ times the number of factorizations
\( T_{w_\lambda} = \pm T_{u_1} \cdots T_{u_a}\) in the 0-Hecke algebra \(H_{k+1}\),
for cyclically decreasing words  $u_1,\dots, u_a$ of lengths \(\mu_1, \dots, \mu_a\).
We have
\(w_{1^m} = s_{-m+1} \cdots s_{-1} s_0\) with indices taken modulo
\(k+1\). Since no braid or commutations relations can be applied to this word,
the only factorizations of \(T_{w_{1^m}}\) of the above form are
\[   T_{w_{1^m}} = \pm T_{-m+1}^{a_m} \cdots T_{-1}^{a_2}
  T_{0}^{a_1} \,, \ \ a_i \geq 1 \,,
\]
each  $u_i$ being a simple reflection.
Thus the coefficient of  $m_\mu$ in  $G_{1^m}^{(k)}$ is 0 unless
\(\mu=(1^{a})\) for \(a \geq m\). For each such \(a\), there are exactly
\(\binom{a-1}{m-1}\) possible factorizations. Therefore
 \[
  G_{1^m}^{(k)} = \sum_{a \geq m}
   (-1)^{a-m} \binom{a-1}{m-1} m_{1^a} =
   \sum_{i \geq 0} (-1)^i \binom{m+i-1}{m-1} e_{m+i}
   = G_{1^m} \,,
\]
where the last equality is a well-known formula
for \(G_{1^m}\) (which we used earlier in Theorem~\ref{shift-invariance}).

\subsection{Equivalence of \(K\)-\(k\)-Schur function descriptions}\label{sec:eq-of-Kks}

\begin{rmk}
The affine stable Grothendieck polynomials  $\{G_\mu^{(k)}\}_{\mu \in \Par^k}$
and $K$-$k$-Schur functions $\{g_\lambda^{(k)}\}_{\lambda \in \Par^k}$
are presented somewhat differently in \cite{lss} and \cite{morse},
but are indeed the same.
For the  $G_\mu^{(k)}$'s, this is by \cite[\S7.4]{lss}
and \cite[(31)--(32) and Theorem 28]{morse}.
Moreover, in both papers,
the  $G_\mu^{(k)}$'s and  $g_\lambda^{(k)}$'s determine each other
by  $\langle  g_\lambda^{(k)}, G^{(k)}_\mu \rangle = \delta_{\lambda \mu}$
(see \cite[\S7.5]{lss} and \cite[Property 40]{morse}).
\end{rmk}

\begin{proof}[Proof of Theorem \ref{t K k Schur basics}]
By \cite{lss}*{Theorems 6.8 and 7.17(1)},
there are Hopf algebra isomorphisms
$K_*({\rm Gr_{SL_{k+1}}}) \to \mathbb{L}_0 \to \sym_{(k)}$
under which $\xi^0_{w} \mapsto \varphi_0(k_{w}) \mapsto g_w^{(k)}$ for all $w\in \eS_{k+1}^0$,
where the $\varphi_0(k_{w})$ are versions of  $K$-$k$-Schur functions lying in
a subalgebra  $\mathbb{L}_0$ of the 0-Hecke algebra  $H_{k+1}$.
Equation~(6.1) and Corollary~7.6 of \cite{lss} determine certain structure constants of the
$\varphi_0(k_{w})$; the $g_w^{(k)}$ have the same structure constants, so translating notation from \cite{lss} gives
that for all  $v \in \eS_{k+1}^0$ and  $r \in [k]$,
\begin{equation}
  \label{eq:horiz-pieri}
  g_{s_{r-1} \cdots s_0}^{(k)} \, g_v^{(k)} = \sum_{\substack{u \in \eS_{k+1} \text{\,cyclically decreasing} \\
  \ell(u) = r \\ T_uT_v = \pm T_w; \, w \in \eS_{k+1}^0 }}
  (-1)^{\ell(v)+r-\ell(w)} g_{w}^{(k)}\,.
\end{equation}
By \cite[Corollary 7.18]{lss},  $g_{s_{r-1} \cdots s_0}^{(k)} = h_r$.
Thus iterating \eqref{eq:horiz-pieri} yields an expression for any  $h_\mu$
($\mu\in \Par^k$)
as a linear combination of $g_\lambda^{(k)}$\,'s.
As $\{h_\mu \mid \mu \in \Par^k, \, |\mu| \le d \}$ forms a basis for the degree  $\le d$
subspace of  $\sym_{(k)}$, the transition matrix from this set to
$\{g_\lambda^{(k)} \mid \lambda \in \Par^k, \, |\lambda| \le d \}$ is invertible, so
\eqref{eq:horiz-pieri} uniquely defines the
$g_\lambda^{(k)}$\,'s.

Now by~\cite{morse}*{\S8}, there is an involution
\(\Omega \from \sym_{(k)} \to \sym_{(k)}\) defined by
\(\Omega(h_r) = g_{1^r}\), and
$\Omega(g_v^{(k)}) =  g_{\tau(v)}^{(k)}$
for all  $v \in \eS_{k+1}^0$, where
$\tau \colon \eS_{k+1} \to \eS_{k+1}$ is the automorphism given by $s_i \mapsto s_{k+1-i}$.
Applying  $\Omega$ to \eqref{eq:horiz-pieri} thus gives \eqref{gpieri}.
Since  $\Omega$ is an involution, it follows that \eqref{gpieri} also uniquely defines the
the  $g_\lambda^{(k)}$\,'s.
\end{proof}

\begin{bibdiv}
  \begin{biblist}
    \bib{anderson}{article}{
      author={Anderson, David},
      title={$K$-theoretic Chern class formulas for vexillary degeneracy loci},
      journal={Adv. Math.},
      volume={350},
      date={2019},
      pages={440--485},
    }
    \bib{act}{article}{
      author={Anderson, David}
      author={Chen, Linda}
      author={Tseng, Hsian-Hua}
      title={On the quantum \(K\)-ring of the flag manifold}
      year={2017}
      status={preprint}
      note={arXiv: 1711.08414}
    }
    \bib{anderson-fulton}{article}{
      author={Anderson, David},
      author={Fulton, William},
      title={Chern class formulas for classical-type degeneracy loci},
      journal={Compos. Math.},
      volume={154},
      date={2018},
      number={8},
      pages={1746--1774},
    }
    \bib{baldwin-kumar}{article}{
      author={Baldwin, Seth}
      author={Kumar, Shrawan}
      title={Positivity in \(T\)-equivariant \(K\)-theory of flag
        varieties associated to Kac-Moody groups II}
      journal={Represent. Theory}
      volume={21}
      year={2017}
      pages={35--60}
    }

    \bib{catalans}{article}{
      author={Blasiak, Jonah}
      author={Morse, Jennifer}
      author={Pun, Anna}
      author={Summers, Daniel}
      title={Catalan Functions and \(k\)-Schur Positivity}
      year={2019}
      journal={J. Amer. Math. Soc.}
      volume={32}
      number={4}
      pages={921--963}
    }
    \bib{ksplitcatalans}{article}{
      author={Blasiak, Jonah}
      author={Morse, Jennifer}
      author={Pun, Anna}
      author={Summers, Daniel}
      title={$k$-Schur expansions of Catalan functions}
      year={2020}
      journal={Adv Math.}
      volume={371}
    }
    \bib{bott}{article}{
      author={Bott, Raoul}
      title={The space of loops on a Lie group}
      journal={Michigan Math. J.}
      year={1958}
      pages={35--61}
      volume={5}
      number={1}
    }
    \bib{BroerNormality}{article}{
    AUTHOR = {Broer, Bram},
     TITLE = {Normality of some nilpotent varieties and cohomology of line
              bundles on the cotangent bundle of the flag variety},
 BOOKTITLE = {Lie theory and geometry},
    SERIES = {Progr. Math.},
    VOLUME = {123},
     PAGES = {1--19},
 PUBLISHER = {Birkh\"auser Boston, Boston, MA},
      YEAR = {1994}
 }
    \bib{buch}{article}{
      author={Buch, Anders Skovsted}
      title={A Littlewood-Richardson rule for the $K$-theory of Grassmannians}
      journal={Acta Math.}
      year={2002}
      pages={37--78}
    }
    \bib{bkt1}{article}{
      author={Buch, Anders Skovsted},
      author={Kresch, Andrew},
      author={Tamvakis, Harry},
      title={A Giambelli formula for even orthogonal Grassmannians},
      journal={J. Reine Angew. Math.},
      volume={708},
      date={2015},
      pages={17--48},
    }
    \bib{bkt2}{article}{
      author={Buch, Anders Skovsted},
      author={Kresch, Andrew},
      author={Tamvakis, Harry},
      title={A Giambelli formula for isotropic Grassmannians},
      journal={Selecta Math. (N.S.)},
      volume={23},
      date={2017},
      number={2},
      pages={869--914},
    }
    \bib{CH}{thesis}{
      author={Chen, Li-Chung}
      title={Skew-linked partitions and a representation theoretic
        model for \(k\)-Schur functions}
      type={Ph.D. thesis}
      year={2010}
      ios={U.C. Berkeley}
    }
    \bib{CF}{article}{
      author={Ciocan-Fontanine, Ionu\c{t}}
      title={Quantum cohomology of flag varieties}
      journal={Internat. Math. Res. Notices}
      date={1995}
      number={6}
      pages={263--277}
    }
    \bib{fomin-gelfand-postnikov}{article}{
      author={Fomin, Sergey}
      author={Gelfand, Sergei}
      author={Postnikov, Alexander}
      title={Quantum Schubert polynomials}
      journal={J. Amer. Math. Soc.}
      volume={10}
      year={1997}
      number={3}
      pages={565--596}
    }
    \bib{FK}{article}{
      author={Fomin, Sergey}
      author={Kirillov, Anatol N.}
      title={Grothendieck polynomials and the Yang-Baxter equation}
      journal={Proc. 6th Intern. Conf. on Formal Power Series and
        Algebraic Combinatorics, DIMACS}
      year={2004}
      pages={193--189}
    }
    \bib{FK2}{article}{
      author={Fomin, Sergey},
      author={Kirillov, Anatol N.},
      title={The Yang-Baxter equation, symmetric functions, and
        Schubert polynomials},
      booktitle={Proceedings of the 5th Conference on Formal Power Series and
        Algebraic Combinatorics (Florence, 1993)},
      journal={Discrete Math.},
      volume={153},
      date={1996},
      number={1--3},
      pages={123--143},
    }
    \bib{garsia-procesi}{article}{
      author={Garsia, Adriano M.}
      author={Procesi, Claudio}
      title={On certain graded \(S_n\)-modules and the \(q\)-Kostka
        polynomials}
      journal={Adv. Math}
      volume={94}
      issue={1}
      pages={82--138}
      year={1992}
    }
    \bib{garsia-remmel1}{article}{
      author={Garsia, A. M.},
      author={Remmel, J.},
      title={On the raising operators of Alfred Young},
      conference={
        title={Relations between combinatorics and other parts of mathematics},
        address={Proc. Sympos. Pure Math., Ohio State Univ., Columbus, Ohio},
        date={1978},
      },
      book={
        series={Proc. Sympos. Pure Math., XXXIV},
        publisher={Amer. Math. Soc., Providence, R.I.},
      },
      date={1979},
      pages={181--198},
    }
    \bib{garsia-remmel2}{article}{
      author={Garsia, A. M.},
      author={Remmel, J.},
      title={Symmetric functions and raising operators},
      journal={Linear and Multilinear Algebra},
      volume={10},
      date={1981},
      number={1},
      pages={15--23},
    }
    \bib{givental-kim}{article}{
      author={Givental, Alexander}
      author={Kim, Bumsig}
      title={Quantum cohomology of flag manifolds and Toda lattices}
      journal={Communications in mathematical physics}
      volume={168}
      issue={3}
      year={1995}
      pages={609--641}
    }
    \bib{givental-lee}{article}{
      author={Givental, Alexander}
      author={Lee, Yuan-Pin}
      title={Quantum \(K\)-theory on flag manifolds, finite-difference
        Toda lattices and quantum groups}
      journal={Invent. Math.}
      volume={151}
      year={2003}
      number={1}
      pages={193--219}
    }
    \bib{ikedaprivate}{misc}{
      author={Ikeda, Takeshi}
      year={2020}
      note={Private communication}
    }
    \bib{IIM}{article}{
      author={Ikeda, Takeshi}
      author={Iwao, Shinsuke}
      author={Maeno, Toshiaki}
      title={Peterson Isomorphism in \(K\)-theory and Relativistic
        Toda Lattice}
	journal={International Mathematics Research Notices}
      year={2018}
      note={https://doi.org/10.1093/imrn/rny051}
    }
    \bib{kashiwara}{article}{
      author={Kashiwara, Masaki}
      title={The flag manifold of Kac-Moody Lie algebra}
      journal={Algebraic analysis, geometry, and number theory}
      pages={161--190}
      year={1988}
    }
    \bib{kato}{article}{
      author={Kato, Syu}
      title={Loop structure on equivariant K-theory of semi-infinite flag manifolds}
      status={preprint}
      year={2018}
      note={arXiv:1805.01718}
    }
    \bib{KM}{article}{
      author={Kirillov, A. N.}
      author={Maeno, T.}
      title={A note on quantum \(K\)-theory of flag varieties and some
        quadric algebras}
      note={in preparation}
    }
    \bib{Lam06}{article}{
      author={Lam, Thomas}
      title={Affine Stanley symmetric functions}
      journal={Amer. J. Math.}
      volume={128}
      year={2006}
      number={6}
      pages={1553--1586}
    }
    \bib{Lam08}{article}{
      author={Lam, Thomas}
      title={Schubert polynomials for the affine Grassmannian}
      journal={J. Amer. Math. Soc.}
      volume={21}
      year={2008}
      number={1}
      pages={259--281}
    }
    \bib{LLMS}{article}{
      author={Lam, Thomas}
      author={Lapointe, Luc}
      author={Morse, Jennifer}
      author={Shimozono, Mark}
      title={Affine insertion and Pieri rules for the affine
        Grassmannian}
      journal={Mem. Amer. Math. Soc.}
      volume={208}
      year={2010}
      number={977}
    }
    \bib{llms-conj-peterson-isom}{article}{
      author={Lam, Thomas},
      author={Li, Changzheng},
      author={Mihalcea, Leonardo C.},
      author={Shimozono, Mark},
      title={A conjectural Peterson isomorphism in $K$-theory},
      journal={J. Algebra},
      volume={513},
      date={2018},
      pages={326--343}
    }

    \bib{lss}{article}{
      author={Lam, Thomas}
      author={Schilling, Anne}
      author={Shimozono, Mark}
      title={K-theory Schubert calculus of the affine
        Grassmannian}
      year={2010}
      journal={Compositio Math.}
      volume={146}
      pages={811--852}
    }
    \bib{LStoda}{article}{
      author={Lam, Thomas}
      author={Shimozono, Mark}
      title={From quantum Schubert polynomials to \(k\)-Schur
        functions via the Toda lattice}
      journal={Math. Res. Lett.}
      volume={19}
      year={2012}
      number={1}
      pages={81--93}
    }
    \bib{LMtaboncores}{article}{
      author={Lapointe, Luc}
      author={Morse, Jennifer}
      title={Tableaux on $k+1$ cores, reduced words for affine
        permutations, and $k$-Schur function expansions}
      journal={J. Combin Theory Ser. A}
      volume={112}
      year={2005}
      number={1}
      pages={44--81}
    }
    \bib{LMktab}{article}{
      author={Lapointe, Luc}
      author={Morse, Jennifer}
      title={A \(k\)-tableau characterization of \(k\)-Schur
        functions}
      journal={Adv. Math}
      year={2007}
      volume={213}
      issue={1}
      pages={183--204}
    }
    \bib{LM}{article}{
      author={Lapointe, Luc}
      author={Morse, Jennifer}
      title={Quantum cohomology and the $k$-Schur basis}
      journal={Trans. Amer. Math. Soc.}
      volume={360}
      year={2008}
      number={4}
      pages={2021--2040}
    }
    \bib{LasG}{article}{
      author={Lascoux, Alain}
      title={Anneau de Grothendieck de la vari\'{e}t\'{e} de drapeaux}
      language={French},
      conference={
        title={The Grothendieck Festschrift, Vol. III},
      },
      book={
        series={Progr. Math.},
        volume={88},
        publisher={Birkh\"{a}user Boston, Boston, MA},
      },
      date={1990},
      pages={1--34}
    }
    \bib{lascouxnaruse}{article}{
      author={Lascoux, Alain}
      author={Naruse, Hiroshi}
      title={Finite sum Cauchy identity for dual Grothendieck
        polynomials}
      journal={Proc. Japan Acad. Ser. A Math. Sci.}
      volume={90}
      year={2014}
      number={7}
      pages={87--91}
    }
    \bib{lenart}{article}{
      author={Lenart, Cristian}
      title={Combinatorial Aspects of the \(K\)-Theory of
        Grassmannians}
      journal={Annals of Combinatorics}
      year={2000}
      volume={4}
      issue={1}
      pages={67--82}
    }
    \bib{LenartMaeno}{article}{
      author={Lenart, Cristian}
      author={Maeno, Toshiaki}
      title={Quantum Grothendieck Polynomials}
      status={preprint}
      year={2006}
      note={arXiv:math/0608232}
    }
    \bib{lns}{article}{
      author={Lenart, Cristian}
      author={Naito, Satoshi}
      author={Sagaki, Daisuke}
      title={A Chevalley formula for semi-infinite flag manifolds and
        quantum \(K\)-theory}
      status={preprint}
      year={2019}
      note={arXiv:1911.12773}
    }
    \bib{macdonald}{book}{
      author={Macdonald, Ian Grant}
      title={Symmetric Functions and Hall Polynomials}
      publisher={Oxford University Press}
      year={1998}
      edition={2}
    }
    \bib{morse}{article}{
      author={Morse, Jennifer}
      title={Combinatorics of the K-theory of affine
        Grassmannians}
      year={2012}
      journal={Advances in Mathematics}
      volume={229}
      issue={5}
      pages={2950--2984}
    }
    \bib{pany}{article}{
      author={Panyushev, Dmitri I.}
      title={Generalised Kostka-Foulkes polynomials and cohomology of
        line bundles on homogeneous vector bundles}
      journal={Selecta Math. (N.S.)}
      volume={16}
      year={2010}
      number={2}
      pages={315--342}
    }
    \bib{pragacz1}{article}{
      author={Pragacz, Piotr},
      title={Enumerative geometry of degeneracy loci},
      journal={Ann. Sci. \'{E}cole Norm. Sup. (4)},
      volume={21},
      date={1988},
      number={3},
      pages={413--454},
    }
    \bib{pragacz2}{article}{
      author={Pragacz, Piotr},
      title={Algebro-geometric applications of Schur $S$- and $Q$-polynomials},
      conference={
        title={Topics in invariant theory},
        address={Paris},
        date={1989/1990},
      },
      book={
        series={Lecture Notes in Math.},
        volume={1478},
        publisher={Springer, Berlin},
      },
      date={1991},
      pages={130--191},
    }
     \bib{sage}{manual}{
     author={The Sage Developers}
     title = {SageMath, the Sage Mathematics Software System (Version 9.0)}
      note={{\tt https://www.sagemath.org}}
      year = {2020}
     }
\bib{SW}{article}{
    AUTHOR = {Shimozono, Mark},
    AUTHOR  = {Weyman, Jerzy},
     TITLE = {Graded characters of modules supported in the closure of a
              nilpotent conjugacy class},
   JOURNAL = {European J. Combin.},
  FJOURNAL = {European Journal of Combinatorics},
    VOLUME = {21},
      YEAR = {2000},
    NUMBER = {2},
     PAGES = {257--288}
}
    \bib{takigiku}{article}{
      author={Takigiku, Motoki}
      title={A Pieri formula and a factorization formula for sums of
        \(K\)-theoretic \(k\)-Schur functions}
      journal={Algebr. Comb.}
      volume={2}
      number={4}
      year={2019}
      pages={447--480}
    }
    \bib{takigiku-later}{article}{
      author={Takigiku, Motoki}
      title={Automorphisms on the ring of symmetric functions and stable and dual stable Grothendieck polynomials}
      year={2018}
      status={preprint}
      note={arXiv: 1808.02251}
    }
    \bib{tamvakis}{article}{
      author={Tamvakis, Harry},
      title={Giambelli, Pieri, and tableau formulas via raising operators},
      journal={J. Reine Angew. Math.},
      volume={652},
      date={2011},
      pages={207--244},
    }
    \bib{young}{article}{
      author={Young, Alfred},
      title={On Quantitative Substitutional Analysis},
      journal={Proc. London Math. Soc. (2)},
      volume={34},
      date={1932},
      number={3},
      pages={196--230},
      note={(Sixth paper)}
    }
		
  \end{biblist}
\end{bibdiv}
\end{document}
